\theoremstyle{plain}
\newtheorem{theorem}{{Theorem}}[section] 
\newtheorem*{theorem*}{{Theorem}}
\newtheorem*{conjecture*}{Conjecture}
\newtheorem{proposition}[theorem]{Proposition}
\newtheorem*{proposition*}{Proposition}
\newtheorem{corollary}[theorem]{Corollary}
\newtheorem*{corollary*}{Corollary}
\newtheorem{lemma}[theorem]{Lemma}
\newtheorem*{lemma*}{Lemma}
\newtheorem*{assumption*}{Assumption}
\newtheorem{definition}[theorem]{Definition}
\newtheorem*{definition*}{Definition}
\theoremstyle{remark}
\newtheorem{example}{Example}
\newtheorem*{example*}{Example}
\newtheorem*{notation*}{Notation}
\newtheorem*{remark*}{Remark}
\newtheorem{remark}{Remark}
\providecommand{\customgenericname}{}
\newcommand{\newcustomtheorem}[2]{%
  \newenvironment{#1}[1]
  {%
   \renewcommand\customgenericname{#2}%
   \renewcommand\theinnercustomgeneric{##1}%
   \innercustomgeneric
  }
  {\endinnercustomgeneric}
}
\newcommand{\Cov}{\mathrm{Cov}}
\newcommand{\Card}{\mathrm{Card}}
\newcommand{\Aut}{\mathrm{Aut}}
\def\XS{\xspace}
\DeclareMathAlphabet{\mathb}{OML}{cmm}{b}{it}
\def\sbm#1{\ensuremath{\mathb{#1}}}                
  \def\ib{{\sbm{i}}\XS}
  \def\jb{{\sbm{j}}\XS}
\def\PM{\kern0pt^{\textrm{{\scriptsize PM}}}\kern0pt}
\def\MMAP{\kern1pt^{\textrm{{\tiny MMAP}}}\kern-1pt}
\def\projecttitle{
    
}
\title{Characterization of $U$-statistics on RCE matrices}
\author{Tâm \textsc{Le Minh}}
\begin{document}

\begin{center}
{\Large
	\textsc {Characterization of the asymptotic behavior of $U$-statistics on row-column exchangeable matrices}
}
\bigskip

 T\^am Le Minh $^{1,2}$
\bigskip

\textit{\noindent
$^{1}$ Univ. Grenoble Alpes, Inria, CNRS, Grenoble INP, LJK, 38000 Grenoble, France \\
$^{2}$ Universit\'e Paris-Saclay, AgroParisTech, INRAE, UMR MIA Paris-Saclay, 91120, Palaiseau, France 
}
\end{center}

\paragraph{Abstract.} 
 We consider $U$-statistics on row-column exchangeable matrices, arrays invariant to separate permutations of rows and columns and common in bipartite data. Under the standard dissociation assumption, we develop a graph-indexed analogue of the Hoeffding decomposition tailored to RCE dependence. We present a new decomposition based on orthogonal projections onto probability spaces generated by sets of Aldous-Hoover-Kallenberg variables. These sets are indexed by bipartite graphs, enabling the application of graph-theoretic concepts to describe the decomposition. This framework provides new insights into the characterization of $U$-statistics on row-column exchangeable matrices, particularly their asymptotic behavior, including in degenerate cases. Notably, the limit distribution depends only on specific terms in the decomposition, corresponding to non-zero components indexed by the smallest graphs, namely the principal support graphs. We show that the asymptotic behavior of a $U$-statistic is characterized by the properties of its principal support graphs. The number of nodes in these graphs (the principal degree) dictates the convergence rate to the limit distribution, with degeneracy occurring if and only if this number is strictly greater than 1. Furthermore, when the principal support graphs are connected, the limit distribution is Gaussian, even in degenerate cases. Applications to network analysis illustrate these findings.

\paragraph{Keywords:} degenerate $U$-statistics, row-column exchangeability, Hoeffding decomposition, asymptotic distribution, network statistics.

\section*{Introduction}

$U$-statistics generalize the empirical mean to functions of many variables. Given a sample of $n$ observations $(X_1, ..., X_n)$, a $U$-statistic is defined as
\begin{equation*}
    U_n = \binom{n}{k}^{-1} \sum_{1 \le i_1 < ... < i_k \le n} h(X_{i_1}, ..., X_{i_k}),
\end{equation*}
where the kernel $h : \mathbb{R}^k \rightarrow \mathbb{R}$ is a measurable symmetric function. $U$-statistics are a broad class of statistics encompassing many well-known examples such as the empirical variance, the Wilcoxon one-sample statistic, or Kendall's $\tau$. When the observations $(X_1, ..., X_n)$ are independent and identically distributed (i.i.d.), the properties of $U$-statistics are well understood. For general kernels, the central limit theorem (CLT) for $U$-statistics \citep{hoeffding1948class} ensures that the distribution of $\sqrt{n} (U_n - \theta)$ converges to a Gaussian distribution with variance $V$. This result becomes trivial in degenerate cases where $V = 0$. However, \cite{rubin1980asymptotic} showed that there exists an integer $2 \le d \le k$ such that the distribution of $n^{d/2} (U_n - \theta)$ converges to a non-trivial limit, which can be explicitly identified. A key tool in this analysis is the \textit{Hoeffding decomposition} \citep{hoeffding1961strong}, an orthogonal decomposition of $U_n$.

\paragraph{Motivation and applications.} The study of \textit{row-column exchangeable} (RCE) matrices is motivated by their relevance in network analysis. Many real-world datasets consist of relational data that are naturally represented as networks. In such representations, entities correspond to nodes, while their connections are depicted as links. A common structure is a bipartite network, where two distinct sets of nodes exist, and links exclusively connect nodes from different sets. Typical examples of bipartite network-structured data include recommender systems \citep{zhou2007bipartite}, scientific authorship networks \citep{newman2001structure} or ecological pollination networks \citep{dormann2009indices}. The adjacency matrix of a bipartite network is typically rectangular, with rows and columns representing the two distinct node sets. In an adjacency matrix $Y$, each matrix entry $Y_{ij}$ encodes the relationship between the $i$-th row node and the $j$-th column node. For binary networks, $Y_{ij} = 1$ if there is a link between nodes $i$ and $j$, and $Y_{ij} = 0$ otherwise. For weighted networks, $Y_{ij}$ denotes the weight of the link.

Exchangeability of nodes is a standard assumption in probabilistic network analysis. Many random network models satisfy node exchangeability, including the stochastic block models \citep{snijders1997estimation}, the expected degree distribution models \citep{picard2008assessing}, the graphon model \citep{lovasz2006limits}, and their bipartite extensions \citep{govaert2003clustering, ouadah2022motif, diaconis2008graph}. For bipartite networks, exchangeability implies that the distribution remains invariant under independent permutations of the two node sets. Therefore, the adjacency matrix of an exchangeable bipartite network corresponds to the leading rows and columns of an infinite RCE matrix. $U$-statistics on RCE matrices provide a general framework for defining network statistics. Among these, motif (or subgraph) counts are particularly well-studied and serve as key descriptors of network topology  \citep{stark2001compound, picard2008assessing, reinert2010random, bickel2011method, bhattacharyya2015subsampling, coulson2016poisson, gao2017testing, maugis2020testing, naulet2021bootstrap, ouadah2022motif, levin2025bootstrapping}. Motif counts have been widely used to analyze networks across diverse domains, including biology \citep{shen2002network, prvzulj2004modeling, przytycka2006important}, ecology \citep{bascompte2005simple, stouffer2007evidence, baker2015species, lanuza2023non} and sociology \citep{bearman2004chains, faust2006comparing, duma2014network, choobdar2012comparison}. 

\paragraph{$U$-statistics on RCE matrices.} In this paper, we address the asymptotic behavior of $U$-statistics on dissociated, row-column exchangeable matrices. An infinite matrix $Y$ is said to be \textit{row-column exchangeable} (RCE) if its probability distribution remains invariant under separate permutations of its rows and columns  \citep{aldous1981representations}, that is, for any pair of permutations $(\sigma_1, \sigma_2)$ of $\mathbb{N}$,
\begin{equation*}
    (Y_{\sigma_1(i)\sigma_2(j)})_{i,j} \overset{\mathcal{D}}{=} Y.
\end{equation*}
An RCE matrix is \textit{dissociated} if, for any $(m,n)$, $(Y_{ij})_{i \le m, j \le n}$ is independent of $(Y_{ij})_{i > m, j > n}$.

For $n > 0$, let $[ n ] := \{ 1, \dots, n\}$ and $\mathbb{S}_n$ denote the group of permutations of $[ n ]$. The kernels $h : \mathcal{M}_{p,q}(\mathbb{R}) \rightarrow \mathbb{R}$ considered are functions of a $p \times q$ submatrix of $Y$ that satisfy the following symmetry property: for all $(\sigma_1, \sigma_2) \in \mathbb{S}_p \times \mathbb{S}_q$, 
\begin{equation*}
h(Y_{(i_{\sigma_1(1)},...,i_{\sigma_1(p)};j_{\sigma_2(1)},...,j_{\sigma_2(q)})}) = h(Y_{(i_1,i_2,...,i_p;j_1,j_2,...,j_q)}),
\end{equation*} 
where $Y_{(i_1,...,i_p;j_1,...,j_q)}$ denotes the $p \times q$ submatrix of $Y$ corresponding to rows $i_1,...,i_p$ and columns $j_1,...,j_q$. For such symmetric kernels, the order of the indices in the submatrix is irrelevant. Hence, we use the simplified notation
\begin{equation*}
h(Y_{\{i_{1},...,i_{p}\};\{j_{1},...,j_{q}\}}) :=  h(Y_{(i_1,i_2,...,i_p;j_1,j_2,...,j_q)}).
\end{equation*} 

The associated $U$-statistic $U_{m,n}$ computed on the first $m$ rows and $n$ columns of an infinite matrix $Y$, is defined as
\begin{equation}
    U_{m,n} = \binom{m}{p}^{-1} \binom{n}{q}^{-1} \sum_{\substack{\ib \in \mathcal{P}_p([ m ])\\ \jb \in \mathcal{P}_q([ n ])}} h(Y_{\ib,\jb}),
    \label{eq:ustat}
\end{equation}
where for a set $A$ and an integer $k$, $\mathcal{P}_{k}(A)$ denotes the set of all the subsets of $A$ with cardinality $k$, and $Y_{\ib,\jb}$ represents the submatrix of $Y$ defined by the row indices in $\ib$ and column indices in $\jb$. A Hoeffding-type decomposition for these $U$-statistics has been introduced in \cite{leminh2025hoeffding}. This decomposition has been applied to derive a CLT and to construct a general estimator for the asymptotic variance of such $U$-statistics. However, this decomposition cannot identify the limiting distribution in degenerate cases.

\begin{customexa}{A}[Same-row co-engagement ($K_{1,2}$)]
Fix $p=1$ and $q=2$ and consider the kernel $h(y_{11},y_{12})=y_{11}y_{12}$, symmetric in its two column arguments. Our general definition~\eqref{eq:ustat} specializes to
\begin{equation*}
    U_{m,n} = \frac{2}{m n (n-1)} \sum_{i=1}^m \sum_{1\le j_1<j_2\le n} Y_{i j_1} Y_{i j_2},
\end{equation*}
the same-row co-engagement count: in recommender systems (users $\times$ items), it weights pairs of items co-engaged by the same user, in pollination networks (pollinators $\times$ plants), pairs of plants visited by the same pollinator. Under a dissociated RCE null with standardized residuals ($Y_{ij}\stackrel{\text{i.i.d.}}{\sim}\mathcal N(0,1)$), the smallest configuration that contributes to the variance is one row connected to two distinct columns (the 2-star motif $K_{1,2}$). Consequently, $U_{m,n}^h$ is degenerate of order $2$, and because this motif is connected, the centered, scaled statistic has a Gaussian limit with rate $N^{3/2}$ (we set $N:=m+n$ and take $m,n\to\infty$ with $m/N\to\rho\in(0,1)$). This foreshadows two general rules we establish later: the number of nodes in the smallest contributing configurations sets the rate, connected configurations yield Gaussian limits. We reuse Running example A throughout the paper to instantiate the definitions (projections, graph-indexed components) and the variance decomposition.
\end{customexa} 

\paragraph{Preview of running examples.} We revisit three running examples throughout the paper. \textbf{A: Same-row co-engagement $(K_{1,2})$,} introduces the key objects (orthogonal projections and graph-indexed components) and a connected, degenerate case in a simple Erd\H{o}s--Rényi-type network model. \textbf{B: Row heterogeneity,} applies the same machinery with row/column effects and tests whether the row-effect function is constant. \textbf{C: Overdispersion beyond row-column effects,} another degenerate case that tests for residual overdispersion not captured by row-column effects.

\paragraph{Contribution.}
We develop a new orthogonal decomposition for $U$-statistics on RCE matrices. The decomposition is built on the Aldous--Hoover--Kallenberg (AHK) representation \citep{hoover1979relations, aldous1981representations, kallenberg2005probabilistic}, linking our approach to bipartite exchangeable networks. Each term corresponds to a bipartite ``support graph'' (defined in Sec.~\ref{sec:degenerate:principal_part}), which lets us: (i) isolate the pieces that determine the limit, \textbf{principal support graphs}; (ii) read off the convergence rate from their common size, the \textbf{principal degree} (total node count); and (iii) obtain asymptotic normality whenever all principal support graphs are \textbf{connected}, even in Hoeffding-degenerate cases.  

This refines the projection-based approach of \cite{leminh2025hoeffding}, which aggregates graph topologies and therefore cannot separate degenerate higher-order fluctuations. The framework is closed under standard graph operations (intersection, inclusion, connectedness, isomorphism) and interfaces directly with bipartite network tools. We focus on regimes with connected principal support graphs. When they are disconnected, non-Gaussian limits can arise. A full taxonomy is left for future work.

\paragraph{Relation to prior work.} This new framework shares similarities with the generalized $U$-statistics studied by~\cite{janson1991asymptotic}, recently used in the works of \cite{kaur2021higher} and \cite{bhattacharya2023fluctuations}. However, these works primarily address motif counts in unipartite binary exchangeable networks. More precisely, \cite{kaur2021higher} investigated the asymptotic distribution of ``centered'' motif counts, which are not proper $U$-statistics directly computed as defined in~\eqref{eq:ustat}. They obtained a normal approximation theorem through Stein's method, but they did not consider degenerate cases. In contrast, \cite{bhattacharya2023fluctuations} examined the limit distribution of traditional motif counts in degenerate cases, depending on the properties of the network model. However, many other interesting statistics can also be expressed as network $U$-statistics, notably when the networks are weighted \citep{leminh2023ustatistics, leminh2025hoeffding}. Furthermore, the dependence structure induced by the AHK representation for bipartite networks differs from the unipartite setting, making the extension of results from unipartite to bipartite networks nontrivial, as shown in this paper.

\paragraph{Summary and outline.} The principal contribution of this paper is the development of a graph-based framework to characterize network $U$-statistics on bipartite exchangeable models. This framework is associated with an orthogonal decomposition of these $U$-statistics, offering a comprehensive characterization of their asymptotic properties. These properties are illustrated with some examples from statistical network analysis. Section~\ref{sec:degenerate:ahk} introduces the AHK representation of RCE matrices and Section~\ref{sec:degenerate:graph_subsets} presents the concept of graph sets of AHK variables, a key tool in this framework. These graph sets are used in Section~\ref{sec:degenerate:decomposition_probability_space}, which establishes the probability spaces that form the basis for the orthogonal decomposition of $U$-statistics on RCE matrices. Section~\ref{sec:degenerate:decomposition_u} formally defines the orthogonal decomposition. Section~\ref{sec:degenerate:variance} derives a variance decomposition for $U$-statistics on RCE matrices. Sections~\ref{sec:degenerate:principal_part} and~\ref{sec:degenerate:limit_distribution} link the decomposition to the limit distribution of $U$-statistics via the concept of \textit{principal support graphs}, which will be defined there. These sections demonstrate that the limit distribution is determined by the leading terms of the decomposition associated with these graphs. Section~\ref{sec:degenerate:gaussian_case} provides a sufficient condition on principal support graphs for obtaining a Gaussian limit. Section~\ref{sec:degenerate:running_examples} applies the previous result to identify the limit distribution of some statistics, used for hypothesis testing in the running examples. Finally, Section~\ref{sec:degenerate:other_asymptotic_frameworks} explores alternative asymptotic regimes and their implications for principal support graphs and Section~\ref{sec:asymmetric} and extends the results to asymmetric kernels. Theory readers may read end to end. Applied readers can skim Sections~\ref{sec:degenerate:decomposition_probability_space},~\ref{sec:degenerate:principal_part} and~\ref{sec:degenerate:gaussian_case} and rely on the three running examples, using the cheat sheet in Appendix~\ref{app:cheat_sheet} when needed.

\begin{table}[]
\begin{tabular}{|l|l|}
 \hline
$Y=(Y_{ij})$ & Infinite dissociated row-column exchangeable (RCE) matrix \\
$m,n$ & Numbers of rows and columns used to compute $U_{m,n}$ \\
$p,q$ & Kernel dimensions, size of the submatrix given to $h$ \\
$r,c$ & Graph dimensions, numbers of row and column nodes \\
$\ib, \jb$ & Sets of row and column node indices \\
$h:\mathcal M_{p,q}\to\mathbb R$ & Symmetric kernel in $p\times q$ entries \\
$U_{m,n}$ & $U$-statistic on the leading $m \times n$ submatrix of $Y$ \\
$U_{m,n}^h$ & Same as above, emphasizing the kernel $h$ \\
$N$ & Total size $m+n$ (used for rates) \\
$(m_N,n_N)$ & Sequence of matrix sizes in the asymptotic framework \\
$U_N^h$ & Shorthand for $U_{m_N,n_N}^h$ along the sequence $(m_N,n_N)$ \\
$\rho$ & $\lim_{N\to\infty} m_N/N \in (0,1)$ with $N=m_N+n_N$ \\

 \hline
    \end{tabular}
    \caption{Notations defined in the introduction.}
    \label{tab:notations_intro}
\end{table}

\section{Sets of Aldous--Hoover--Kallenberg variables}
\label{sec:degenerate:ahk_sets}

In this section, we introduce the Aldous--Hoover--Kallenberg representation of dissociated RCE matrices in a space generated by independent, identically distributed random variables. Then, we define graph-indexed sets of these variables, referred to as \textit{graph sets}. 

\subsection{Aldous--Hoover--Kallenberg representation of RCE matrices}
\label{sec:degenerate:ahk}

We use the Aldous--Hoover--Kallenberg (AHK) representation for dissociated RCE matrices \citep{hoover1979relations, aldous1981representations, kallenberg2005probabilistic}. If $Y$ is a dissociated RCE matrix, then there exist arrays of i.i.d. variables $(\xi_i)_{i \ge 1}$, $(\eta_j)_{j \ge 1}$ and $(\zeta_{ij})_{i,j \ge 1}$ with uniform distribution over $[0,1]$, and a real measurable function $\phi$ such that for all $1 \le i,j < \infty$, 
\begin{equation*}
    Y_{ij} \overset{a.s.}{=} \phi(\xi_i, \eta_j, \zeta_{ij}).
\end{equation*}
A function of entries of $Y$ can then be written using these AHK variables. In particular, the kernel $h(Y_{\ib, \jb})$, where $\ib \in \mathcal{P}_{p}(\mathbb{N})$ and $\jb \in \mathcal{P}_{q}(\mathbb{N})$, can be written as 
\begin{equation*}
    h(Y_{\ib, \jb}) = h \Big(\big(\phi(\xi_i, \eta_j, \zeta_{ij})\big)_{i \in \ib, j \in \jb}\Big) =: h_\phi\big((\xi_i)_{i \in \ib}, (\eta_j)_{j \in \jb}, (\zeta_{ij})_{i \in \ib, j \in \jb}\big),
\end{equation*} 
and $h_\phi : [0,1]^{p+q+pq} \rightarrow \mathbb{R}$ is a function, with the symmetry property 
\begin{equation*}
    h_\phi\big((\xi_{\sigma_1}(i))_{i \in \ib}, (\eta_{\sigma_2}(j))_{j \in \jb}, (\zeta_{\sigma_1(i)\sigma_2(j)})_{i \in \ib, j \in \jb}\big) = h_\phi\big((\xi_i)_{i \in \ib}, (\eta_j)_{j \in \jb}, (\zeta_{ij})_{i \in \ib, j \in \jb}\big),
\end{equation*}
for all permutations $\sigma_1$ and $\sigma_2$ of $\ib$ and $\jb$ respectively. The $U$-statistic with kernel $h$ defined by~\eqref{eq:ustat} can be rewritten with $h_\phi$ as follows
\begin{equation*}
    U_{m,n} = \left[ \binom{m}{p} \binom{n}{q} \right]^{-1} \sum_{\substack{\ib \in \mathcal{P}_p([ m ])\\ \jb \in \mathcal{P}_q([ n ])}} h_\phi\big((\xi_i)_{i \in \ib}, (\eta_j)_{j \in \jb}, (\zeta_{ij})_{i \in \ib, j \in \jb}\big).
\end{equation*}

With this formula, it becomes apparent that $U_{m,n}$ shares some similarities with the generalized $U$-statistics defined by~\cite{janson1991asymptotic}. Their generalized $U$-statistics are averages of random variables of the form $f((\xi_i)_{i \in \ib}; (\zeta_{ij})_{(i,j) \in \ib^2, i \neq j})$. Thus, although generalized $U$-statistics are adapted to unipartite random graphs, our bipartite setup changes the structure of the variables averaged in the $U$-statistics, which will lead to a different characterization.

For simplification, we will now write $h_{\ib, \jb} := h_\phi\big((\xi_i)_{i \in \ib}, (\eta_j)_{j \in \jb}, (\zeta_{ij})_{i \in \ib, j \in \jb}\big)$, so that
\begin{equation*}
    U_{m,n} = \left[ \binom{m}{p} \binom{n}{q} \right]^{-1} \sum_{\substack{\ib \in \mathcal{P}_p([ m ])\\ \jb \in \mathcal{P}_q([ n ])}} h_{\ib, \jb}.
\end{equation*}

\begin{remark}
Assume $Y_{ij}=\psi(\xi_i,\eta_j)$ (no edge-specific effect $\zeta_{ij}$). 
For any $(p,q)$ and any kernel $h:\mathcal{M}_{p,q}(\mathbb{R})\to\mathbb{R}$ that is symmetric under separate permutations of rows and columns, define
\[
h_\psi(x_1,\ldots,x_p;\,y_1,\ldots,y_q)
:= 
h\!\left(\big[\psi(x_r,y_s)\big]_{1\le r\le p,\;1\le s\le q}\right).
\]
Then for any index sets $\ib \in \mathcal{P}_{p}(\mathbb{N})$ and $\jb \in \mathcal{P}_{q}(\mathbb{N})$,
\[
h\!\left(Y_{\ib,\jb}\right)
=
h_\psi\!\big((\xi_i)_{i\in\ib};\,(\eta_j)_{j\in\jb}\big),
\]
so the associated statistic is a two-sample $U$-statistic with kernel size $(p,q)$ based on the independent samples $(\xi_i)_{i\ge1}$ and $(\eta_j)_{j\ge1}$. Classical two-sample results therefore apply \citep{lehmann1951consistency,lee1990u,korolyuk2013theory}.

In contrast, in the general case $Y_{ij}=\phi(\xi_i,\eta_j,\zeta_{ij})$, the edge effects $\{\zeta_{ij}\}$ are indexed by pairs $(i,j)$. Once $\ib$ and $\jb$ are chosen, the $p \times q$ edge noises are fixed to $(\zeta_{ij})_{(i,j)\in\ib\times\jb}$ and there is no independent averaging over a third sample. This coupling violates the independent-sample structure required by classical multi-sample $U$-statistics, so our object is not a three- (nor two-sample) $U$-statistic.
\end{remark}

\subsection{Graph sets of Aldous--Hoover--Kallenberg variables}
\label{sec:degenerate:graph_subsets}

The idea behind the new decomposition of a $U$-statistic is to find orthogonal projections first for $h_{\ib, \jb}$, for all $\ib$ and $\jb$, and then use the previous expression to derive the decomposition for $U_{m,n}$. To define the projections for $h_{\ib, \jb}$, we have to define the relevant subspaces for these projections. These subspaces, defined in the next section, are generated by subsets of AHK variables. To denote these subsets, we will be using a notation involving bipartite graphs. These graphs have no direct link with the network data, they are just a formalism to define subsets of AHK variables.

\subsubsection{Notations for bipartite graphs}

A bipartite graph $G$ is denoted $G=\big(V_1(G), V_2(G), E(G)\big)$, where $V_1(G)$ and $V_2(G)$ are the two sets of vertices and $E(G) \subseteq V_1(G) \times V_2(G)$ is the set of edges of $G$. We denote $v_1(G) = \Card\big(V_1(G)\big)$ and $v_2(G) = \Card\big(V_2(G)\big)$. A subgraph $F \subseteq G$ is such that $V_1(F) \subseteq V_1(G)$, $V_2(F) \subseteq V_2(G)$ and $E(F) \subseteq \big(V_1(F) \times V_2(F)\big) \cap E(G)$. We write $F \subset G$ if we have both $F \subseteq G$ and $F \neq G$.

Let $A = \{a_i : i \in I \}$ be a countable set indexed by $I$ and $\sigma$ some mapping $\sigma : I \rightarrow I$. We denote the action of $\sigma$ on $A$ by $\sigma A = \{ a_{\sigma(i)} : i \in I\}$. Let $G$ be a bipartite graph. Suppose that $V_1(G)$ is indexed by the set $I$ and $V_2(G)$ by the set $J$. The action of a pair of mappings $\Phi = (\sigma_1, \sigma_2)$ on $G$, where $\sigma_1 : I \rightarrow I$ and $\sigma_1 : J \rightarrow J$, is denoted 
\begin{equation}
    \Phi G := \big(\sigma_1 V_1(G), \sigma_2 V_2(G), \Phi E(G)\big),
\end{equation}
where $\Phi E(G) = \big\{ (x_{\sigma_1(i)}, y_{\sigma_2(j)}) : (x_i, y_j) \in E(G), (i,j) \in I \times J \big\}$. Among these mappings, the bijective ones are called permutations.

For two bipartite graphs $G_1$ and $G_2$ with same number of row nodes $r = v_1(G_1) = v_1(G_2)$ and column nodes $c = v_2(G_1) = v_2(G_2)$, we say that they are isomorphic if and only if there exists a pair of permutations $\Phi = (\sigma_1, \sigma_2) \in \mathbb{S}_r \times \mathbb{S}_c$ such that $\Phi G_1 = G_2$. In this case, we write $G_1 \sim G_2$. The number of elements $\Phi$ of $\mathbb{S}_r \times \mathbb{S}_c$ such that $\Phi G = G$ is the number of automorphisms of $G$, denoted $\lvert \Aut(G) \rvert$.

We define $K_{\ib, \jb} = (\ib, \jb, \ib \times \jb)$ the fully connected bipartite graph with row node set $\ib$ and column node set $\jb$. For $p \ge 0$ and $q \ge 0$, we denote $K_{p,q} = K_{[ p ], [ q ]}$. 

For $r \ge 0$ and $c \ge 0$, we can define a minimal set $\Gamma_{r,c}$ of all subgraphs of $K_{r,c}$ with $r$ row nodes and $c$ column nodes, such that every graph $G$ with the same numbers of nodes is isomorphic to exactly one element of $\Gamma_{r,c}$. Denote $\Gamma^-_{p,q} = \bigcup_{(0,0) < (r,c) \le (p,q)} \Gamma_{r,c}$. The notation $(0,0) < (r,c) \le (p,q)$ means $0 \le r \le p$, $0 \le c \le q$ and $(r,c) \neq (0,0)$. Every non-empty graph $G$ with $v_1(G) \le p$ and $v_2(G) \le q$ is isomorphic to exactly one element of $\Gamma^-_{p,q}$.

\subsubsection{Definition of graph sets}

Let $G$ be a bipartite graph. We can define the set $H(G)$ of AHK variables associated to $G$ as 
\begin{equation*}
    H(G) = \big((\xi_i)_{i \in V_1(G)}, (\eta_j)_{j \in V_2(G)}, (\zeta_{ij})_{(i,j)\in E(G)}\big).
\end{equation*}
$H(G)$ is composed of the variables $(\xi_i)$ indexed by the vertices in $V_1(G)$, $(\eta_j)$ indexed by the vertices in $V_2(G)$, and $(\zeta_{ij})$ indexed by the edges in $E(G)$. Figure~\ref{fig:degenerate:subgraph} right illustrates this construction. We see that $h_{\ib, \jb} = h_\phi\big((\xi_i)_{i \in \ib}, (\eta_j)_{j \in \jb}, (\zeta_{ij})_{i \in \ib, j \in \jb}\big) = h_\phi\big(H(K_{\ib, \jb})\big)$. In other words, $h_{\ib, \jb}$ belongs to some functional probability space generated by the AHK variables $H(K_{\ib, \jb})$. The subspaces on which  $h_{\ib, \jb}$ will be decomposed are generated by subsets of $H(K_{\ib, \jb})$, which are of the form $H(G)$, where $G \subset K_{\ib, \jb}$, as shown in Figure~\ref{fig:degenerate:subgraph}.

\begin{figure}[!tb]
\centering
\includegraphics[width=0.7\linewidth]{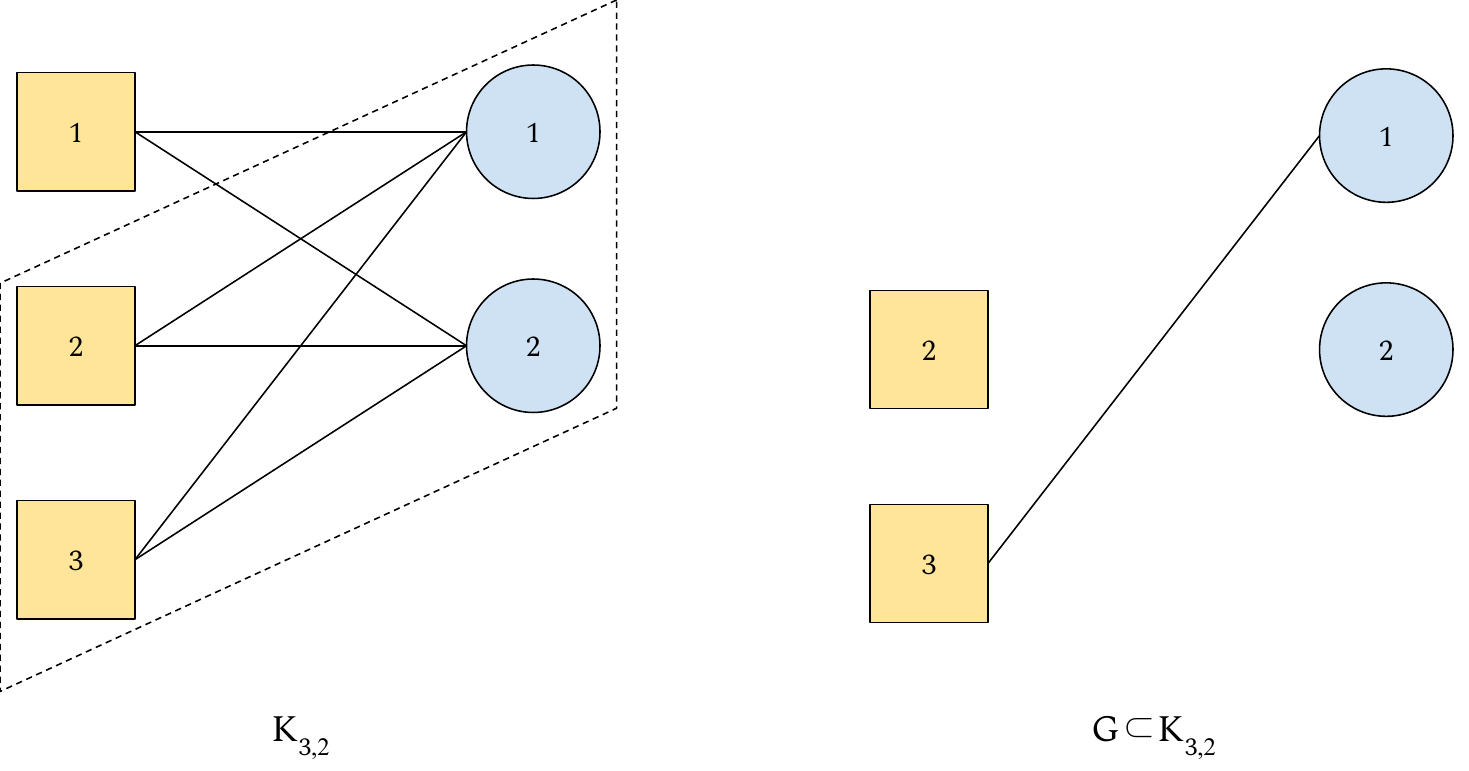}
\caption[A bipartite graph and one subgraph.]{A bipartite graph and one subgraph. For each graph, the row nodes are on the right and the column nodes are on the left. Left: the graph $K_{3,2}$. Right: a subgraph $G$ extracted from the row nodes $\{2, 3\}$ and the column nodes $\{1,2\}$ of $K_{3,2}$. Here, $G$ only keeps one edge among the four allowed between the row nodes $\{2, 3\}$ and the column nodes $\{1,2\}$. $G$ defines the subset $H(G) = \big(\{\xi_2, \xi_3\}, \{\eta_1, \eta_2\}, \{\zeta_{13}\}\big)$.}
\label{fig:degenerate:subgraph}
\end{figure}

In the following section, we define rigorously these subspaces and we exhibit some of their properties. This enables us to define a decomposition for $U$-statistics on RCE matrices.

\begin{table}[]
\normalsize
\begin{tabular}{|l|l|}
\hline
AHK variables & Latents variables $(\xi_i),(\eta_j),(\zeta_{ij})_{i,j}$ in the Aldous--Hoover--Kallenberg representation \\
$K_{\ib,\jb}$ & Complete bipartite graph with row set $\ib$ and column set $\jb$, edge set $\ib\times\jb$ \\
$K_{r,c}$ & Shorthand for $K_{[r],[c]}$ with $[r]=\{1,\dots,r\}$ and $[c]=\{1,\dots,c\}$ \\
Minimal set & Only one representative per isomorphism class in the set \\
$\Gamma_{r,c}$ & Minimal set of subgraphs of $K_{r,c}$ with $r$ row and $c$ column nodes \\
$\Gamma_{p,q}^{-}$ & Minimal set of subgraphs of $K_{p,q}$ with at most $p$ row and $q$ column nodes \\
$|V(G)|$ & Total nodes of $G$, equals $|V_1(G)|+|V_2(G)|$ ($r+c$ for $G\in\Gamma_{r,c}$) \\
$G_1 \sim G_2$ & $G_1$ and $G_2$ are isomorphic \\
$\mathrm{Aut}(G)$ & Automorphism group of $G$ \\
$H(G)$ & AHK variables attached to graph $G$: \\
& \qquad $H(G) = ((\xi_i)_{i\in V_1(G)},(\eta_j)_{j\in V_2(G)},(\zeta_{ij})_{(i,j)\in E(G)})$ \\
\hline
    \end{tabular}
    \caption{Principal notions and notations defined in Section~\ref{sec:degenerate:ahk_sets}.}
    \label{tab:notations_1}
\end{table}

\section{Orthogonal decomposition of $U$-statistics on RCE matrices}
\label{sec:degenerate:orthogonal_decomposition}

In this section, we propose a new orthogonal decomposition of the probability space generated by the AHK variables and compare it to the coarser decomposition of \cite{leminh2025hoeffding}. Then, we deduce the associated decomposition for $U$-statistics on RCE matrices and their variances.

\subsection{Decomposition of the probability space}
\label{sec:degenerate:decomposition_probability_space}

Let $G$ be a bipartite graph and denote $L_2(G)$ the space of all square-integrable random variables measurable with respect to $\sigma(H(G))$. $L_2(G)$ is an Hilbert space with inner product $\left<X, Y\right> = \mathbb{E}[ X Y]$. We investigate the following decomposition for $X \in L_2(G)$
\begin{equation}
    X = \sum_{F \subseteq G} p^F(X),
    \label{eq:degenerate:decomposition_l2}
\end{equation}
where the $p^F(X)$ are defined by recursion with $p^\emptyset(X) = \mathbb{E}[X]$ and for all $F$, 
\begin{equation*}
    p^F(X) = \mathbb{E}[X \mid H(F) ] - \sum_{F' \subset F} p^{F'}(X).
\end{equation*}

Now, we define $L_2^*(G) \subset L_2(G)$ as follows
\begin{equation}
    L_2^*(G) = \left\{ X \in L_2(G) : \mathbb{E}[X \mid H(F)] = 0, \forall F \subset G \right\}.
    \label{eq:degenerate:subspace_definition}
\end{equation}
These subspaces are linked to the decomposition~\eqref{eq:degenerate:decomposition_l2}. First, we show that each term of the decomposition belongs to one of these spaces, which shows that the decomposition is a decomposition on these subspaces. The following proposition can be shown by induction, as indicated in Appendix~\ref{app:degenerate:proof_decomposition_probability_space}.

\begin{proposition}
    For two bipartite graphs $F \subseteq G$ and $X \in L_2(G)$, $p^F(X) \in L_2^*(F)$.
    \label{prop:degenerate:l2_direct_sum}
\end{proposition}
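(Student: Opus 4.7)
My plan is to prove this by strong induction on a size parameter such as $s(F) := v_1(F) + v_2(F) + \lvert E(F) \rvert$, which strictly decreases when passing to a proper subgraph. The base case $s(F) = 0$, i.e.\ $F = \emptyset$, is immediate: $p^\emptyset(X) = \mathbb{E}[X]$ is a constant and there are no proper subgraphs to check, so the condition defining $L_2^*(\emptyset)$ is vacuous.

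For the inductive step, I would fix $F$ with $s(F) > 0$, assume the conclusion for every $F'' \subsetneq F$, and pick an arbitrary $F^* \subset F$. Starting from the recursive definition of $p^F(X)$, together with the tower property (valid because $\sigma(H(F^*)) \subseteq \sigma(H(F))$), I write
\begin{equation*}
\mathbb{E}[p^F(X) \mid H(F^*)] = \mathbb{E}[X \mid H(F^*)] - \sum_{F' \subset F} \mathbb{E}[p^{F'}(X) \mid H(F^*)],
\end{equation*}
reducing the problem to computing each cross-term $\mathbb{E}[p^{F'}(X) \mid H(F^*)]$.

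The central lemma I would establish is a compatibility between bipartite-graph intersection and the AHK variable map: defining $F' \cap F^*$ node-wise and edge-wise, I claim $H(F') \cap H(F^*) = H(F' \cap F^*)$, and the disjoint piece $H(F') \setminus H(F^*)$ is independent of $H(F^*)$ because the AHK variables are jointly i.i.d. This lets me replace the conditioning $\sigma$-algebra by the smaller $\sigma(H(F' \cap F^*))$, yielding $\mathbb{E}[p^{F'}(X) \mid H(F^*)] = \mathbb{E}[p^{F'}(X) \mid H(F' \cap F^*)]$. Then I split by cases: if $F' \subseteq F^*$, one has $F' \cap F^* = F'$ and the conditional expectation is $p^{F'}(X)$ itself, since $p^{F'}(X)$ is $\sigma(H(F'))$-measurable; if $F' \not\subseteq F^*$, then $F' \cap F^* \subsetneq F'$ and the induction hypothesis $p^{F'}(X) \in L_2^*(F')$ forces the conditional expectation to vanish. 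Summing only the surviving terms gives
\begin{equation*}
\mathbb{E}[p^F(X) \mid H(F^*)] = \mathbb{E}[X \mid H(F^*)] - \sum_{F' \subseteq F^*} p^{F'}(X),
\end{equation*}
and the defining recursion of $p^{F^*}$, rearranged into $\mathbb{E}[X \mid H(F^*)] = \sum_{F' \subseteq F^*} p^{F'}(X)$, makes the right-hand side telescope to zero.

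The step I expect to be the main obstacle is the graph-theoretic bookkeeping around the identity $H(F') \cap H(F^*) = H(F' \cap F^*)$. One must handle edge variables carefully, since $\zeta_{ij}$ appears in $H(F)$ precisely when $(i,j) \in E(F)$, not merely when $i \in V_1(F)$ and $j \in V_2(F)$; a naive definition of intersection could introduce spurious $\zeta_{ij}$ on one side. Once this identity and the ensuing independence statement between $H(F') \setminus H(F^*)$ and $H(F^*)$ are firmly in place, the remainder is a standard Hoeffding-style telescoping argument and presents no real difficulty.
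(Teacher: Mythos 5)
Your proof is correct and follows essentially the same route as the paper's: an induction over subgraphs in which each cross-term $\mathbb{E}[p^{F'}(X)\mid H(F^*)]$ is reduced to conditioning on $H(F'\cap F^*)$ via the independence of the AHK variables, the terms with $F'\not\subseteq F^*$ are killed by the induction hypothesis, and the remaining terms telescope against $\mathbb{E}[X\mid H(F^*)]=\sum_{F'\subseteq F^*}p^{F'}(X)$. The only difference is the order of the bookkeeping (you split into cases before cancelling, the paper cancels first), which is immaterial.
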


Now, we prove the most important property of this decomposition. An Hoeffding-type decomposition is an orthogonal decomposition. The following theorem shows that this is the case and characterizes our decomposition.
\begin{theorem}
    For all bipartite graphs $G$, $L_2(G)$ is the orthogonal direct sum of the spaces $L_2^*(F)$, for all $F \subseteq G$. This is denoted by $$L_2(G) \bigoplus^\perp_{F \subseteq G} L_2^*(F).$$
    \label{th:degenerate:l2_ortho}
\end{theorem}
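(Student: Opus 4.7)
The plan is to establish the two ingredients of an orthogonal direct sum separately: first, that the family $\{L_2^*(F) : F \subseteq G\}$ spans $L_2(G)$, and second, that $L_2^*(F) \perp L_2^*(F')$ whenever $F \neq F'$. Uniqueness of the decomposition is then automatic from orthogonality, so nothing else is needed.

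For the spanning part, I would evaluate the recursion defining $p^F$ at $F = G$, which rearranges to $\mathbb{E}[X \mid H(G)] = \sum_{F \subseteq G} p^F(X)$. Any $X \in L_2(G)$ is $\sigma(H(G))$-measurable, so the left-hand side equals $X$, yielding the explicit representation $X = \sum_{F \subseteq G} p^F(X)$. Combining with Proposition~\ref{prop:degenerate:l2_direct_sum} (each $p^F(X) \in L_2^*(F)$), we get $L_2(G) = \sum_{F \subseteq G} L_2^*(F)$.

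For orthogonality, the crucial structural observation is that, viewing $H(\cdot)$ as a set of latent variables, $H(F) \cap H(F') = H(F \cap F')$, where $F \cap F' := (V_1(F) \cap V_1(F'),\, V_2(F) \cap V_2(F'),\, E(F) \cap E(F'))$ is the graph-theoretic intersection. This equality follows directly from the indexing of the AHK variables by vertices and edges. Since all AHK variables are jointly independent, the complementary sets $H(F) \setminus H(F \cap F')$ and $H(F') \setminus H(F \cap F')$ are disjoint and mutually independent, and both are independent of $H(F \cap F')$. Hence, given $H(F \cap F')$, any $X \in L_2^*(F)$ and $Y \in L_2^*(F')$ are conditionally independent. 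Because $F \neq F'$, at least one of the strict inclusions $F \cap F' \subsetneq F$ or $F \cap F' \subsetneq F'$ holds; by symmetry of the inner product I may assume the latter, so that the defining property~\eqref{eq:degenerate:subspace_definition} of $L_2^*(F')$ forces $\mathbb{E}[Y \mid H(F \cap F')] = 0$. Combining with conditional independence gives
\begin{equation*}
\mathbb{E}[XY] \;=\; \mathbb{E}\bigl[\,\mathbb{E}[X \mid H(F \cap F')]\,\mathbb{E}[Y \mid H(F \cap F')]\,\bigr] \;=\; 0.
\end{equation*}

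The step I expect to require the most care is the conditional-independence argument. Although intuitive from the AHK representation, it demands accurate bookkeeping of which latent variables are shared between $H(F)$ and $H(F')$; in particular, one must verify that $E(F) \cap E(F') = E(F \cap F')$, so that no edge variable $\zeta_{ij}$ sneaks into the intersection merely because $i$ and $j$ happen to lie in both vertex sets. Once this identification is in place, the remainder reduces to the tower property and elementary properties of conditional expectations under independence.
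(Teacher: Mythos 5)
Your proof is correct and follows essentially the same route as the paper's: the spanning part comes from evaluating the recursion at $F=G$ together with Proposition~\ref{prop:degenerate:l2_direct_sum}, and orthogonality is obtained by reducing the conditioning to the intersection graph $F\cap F'$ and invoking the defining property of $L_2^*$ on whichever factor strictly contains the intersection. Your version merely spells out the conditional-independence step ($H(F)\cap H(F')=H(F\cap F')$ plus joint independence of the AHK variables) that the paper uses implicitly when it replaces $\mathbb{E}[X_2\mid H(G_1)]$ by $\mathbb{E}[X_2\mid H(\overline{G})]$.
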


\begin{proof}
    Equation~\eqref{eq:degenerate:decomposition_l2} and Proposition~\ref{prop:degenerate:l2_direct_sum} already show that $L_2(G) \bigoplus_{F \subseteq G} L_2^*(F)$.
    We only have to show that for any two distinct bipartite graphs $G_1$ and $G_2$, we have $L_2^*(G_1) \perp L_2^*(G_2)$. Let $X_1 \in L_2^*(G_1)$ and $X_2 \in L_2^*(G_2)$. Let $\overline{G} = G_1 \cap G_2$. Since $G_1$ and $G_2$ are distinct, then at least one of the affirmations $\overline{G} \subset G_1$ and $\overline{G} \subset G_2$ is true. Assume that $\overline{G} \subset G_1$, then $\mathbb{E}[X_1 X_2] = \mathbb{E}[\mathbb{E}[X_1 X_2 \mid H(G_1)]] = \mathbb{E}[X_1 \mathbb{E}[X_2 \mid H(\overline{G})]] = 0$, so $L_2^*(G_1) \perp L_2^*(G_2)$.
\end{proof}

    \begin{remark}
        From this proof, we can see that $L_2^*(G)$ can also be characterized by the expression $L_2^*(G) = L_2(G) \cap (\cup_{F \subset G} L_2(F)^\perp)$.
    \end{remark}
    
\begin{example}[Projection for $K_{1,1}$]\label{ex:proj-K11}
$L_2(K_{1,1})$ is generated by $(\xi_1,\eta_1,\zeta_{11})$. Let
\[
F_1=\varnothing,\quad
F_2=K_{1,0},\quad
F_3=K_{0,1},\quad
F_4=\big(\{1\},\{1\},\varnothing\big)\ \text{(two nodes, no edge)},\quad
F_5=K_{1,1}.
\]
For each $F_i$, we write $H(F_i)$ for the associated AHK variables. For any $X\in L_2(K_{1,1})$, the recursion in \eqref{eq:degenerate:decomposition_l2} gives
\[
X=\sum_{i=1}^5 p^{F_i}(X),
\]
with
\begin{itemize}
  \item $p^{F_1}(X)=\mathbb E[X]$,
  \item $p^{F_2}(X)=\mathbb E[X\mid H(F_2)]-\mathbb E[X]=\mathbb E[X\mid \xi_1]-\mathbb E[X]$,
  \item $p^{F_3}(X)=\mathbb E[X\mid H(F_3)]-\mathbb E[X]=\mathbb E[X\mid \eta_1]-\mathbb E[X]$,
  \item $p^{F_4}(X)=\mathbb E[X\mid H(F_4)]-\mathbb E[X\mid H(F_2)]-\mathbb E[X\mid H(F_3)]+\mathbb E[X] =\mathbb E[X\mid \xi_1,\eta_1]-\mathbb E[X\mid \xi_1]-\mathbb E[X\mid \eta_1]+\mathbb E[X]$,
  \item $p^{F_5}(X)=\mathbb E[X\mid H(F_5)]-\mathbb E[X\mid H(F_4)]
        =\mathbb E[X\mid \xi_1,\eta_1,\zeta_{11}]-\mathbb E[X\mid \xi_1,\eta_1]
        = X-\mathbb E[X\mid \xi_1,\eta_1]$.
\end{itemize}
Each component lies in $L_2^*(F_i)$ by Proposition~\ref{prop:degenerate:l2_direct_sum}, and the family is pairwise orthogonal by Theorem~\ref{th:degenerate:l2_ortho}.
\end{example}

\subsection{Decomposition of $U$-statistics}
\label{sec:degenerate:decomposition_u}

For all $(0,0) \le (p,q) \le (m,n)$, $(\ib,\jb) \in \mathcal{P}_p([ m ]) \times \mathcal{P}_q([ n ])$, $G \subseteq K_{\ib, \jb}$, we can apply the decomposition~\eqref{eq:degenerate:decomposition_l2} on $h_{\ib, \jb} \in L_2(K_{\ib, \jb})$. 
\begin{equation*}
    p^{G}(h_{\ib, \jb}) = \mathbb{E}[h_{\ib, \jb} \mid H(G)] - \sum_{F \subset G} p^{F}(h_{\ib, \jb}),
\end{equation*}
where $p^{\emptyset}(h_{\ib, \jb}) = \mathbb{E}[h_{\ib, \jb}] = \mathbb{E}[h_{[ p ], [ q ]}]$. 

For all $G \subseteq K_{\ib, \jb}$, we remind that $V_1(G) \subseteq \ib$ and $V_2(G) \subseteq \jb$. Define $\overline{V_1(G)}$ and $\overline{V_2(G)}$ the complements of respectively $V_1(G)$ and $V_2(G)$ in respectively $\ib$ and $\jb$. In fact, the term $p^{G}(h_{\ib, \jb})$ does not depend on the elements of $\overline{V_1(G)}$ and $\overline{V_2(G)}$, i.e. even if $(\ib_1, \jb_1) \neq (\ib_2, \jb_2)$, as long as $G \subset K_{\ib_1, \jb_1} \cap K_{\ib_2, \jb_2}$, we have $p^{G}(h_{\ib_1, \jb_1}) = p^{G}(h_{\ib_2, \jb_2})$. Therefore, we use the notation $p^{G} := p^{G}(h_{\ib, \jb})$, for all $G \in K_{\ib, \jb}$. From Equation~\eqref{eq:degenerate:decomposition_l2}, we can write
\begin{equation*}
    h_{\ib, \jb} = \sum_{G \subseteq K_{\ib, \jb}} p^{G},
\end{equation*}
and the $U$-statistic $U_{m,n}$ can be rewritten
\begin{equation*}
\begin{split}
    U_{m,n} &= \binom{m}{p}^{-1} \binom{n}{q}^{-1} \sum_{\substack{\ib \in \mathcal{P}_p([ m ])\\ \jb \in \mathcal{P}_q([ n ])}} \sum_{G \subseteq K_{\ib, \jb}} p^{G} \\
    &= \binom{m}{p}^{-1} \binom{n}{q}^{-1} \sum_{\substack{\ib \in \mathcal{P}_p([ m ])\\ \jb \in \mathcal{P}_q([ n ])}} \sum_{(0,0) \le (r,c) \le (p,q)} \sum_{\substack{G \subseteq K_{\ib, \jb} \\ (v_1(G), v_2(G)) = (r,c)}} p^{G} \\
    &= \sum_{(0,0) \le (r,c) \le (p,q)} P^{r,c}_{m,n},
\end{split}
\end{equation*}
where 
\begin{equation*}
    P^{r,c}_{m,n} = \binom{m}{p}^{-1} \binom{n}{q}^{-1} \sum_{\substack{\ib \in \mathcal{P}_p([ m ])\\ \jb \in \mathcal{P}_q([ n ])}} \sum_{\substack{G \subseteq K_{\ib, \jb} \\ (v_1(G), v_2(G)) = (r,c)}} p^{G}.
\end{equation*}

Note that in general, for $G \subseteq K_{\ib, \jb}$, $p^{G}$ is not symmetric, that means $p^{G}(h_{\sigma_1 \ib, \sigma_2 \jb}) \neq p^G(h_{\ib, \jb})$ for a pair of permutations $(\sigma_1, \sigma_2) \in \mathbb{S}_p \times \mathbb{S}_q$. We define $\bar{p}^{G}$ the symmetrized version of $p^{G}$ as
\begin{equation*}
    \bar{p}^{G} = \sum_{(\sigma_1, \sigma_2) \in \mathbb{S}_p \times \mathbb{S}_q} p^{G}(h_{\sigma_1 \ib, \sigma_2 \jb}) = \sum_{\Phi \in \mathbb{S}_p \times \mathbb{S}_q} p^{\Phi G} = \sum_{\substack{G' \subseteq K_{\ib,\jb} \\ G' \sim G}} p^{G'}.
\end{equation*}

For two isomorphic subgraphs $G_1$ and $G_2$ of $K_{\ib, \jb}$, we have $\bar{p}^{G_1} = \bar{p}^{G_2}$ by symmetry. There is exactly one element $G \in \Gamma_{r,c}$, where $r = v_1(G_1) = v_1(G_2)$ and $c = v_2(G_1) = v_2(G_2)$, which is isomorphic to both $G_1$ and $G_2$. Therefore, for all $(\ib, \jb) \in \mathcal{P}_p([ m ]) \times\mathcal{P}_q([ n ])$, we can index these quantities with the graph $G \in \Gamma_{r,c}$ instead of $G \in K_{\ib, \jb}$. Then, we denote
\begin{equation*}
    \widetilde{p}^G_{\ib, \jb} := \bar{p}^{G'},
\end{equation*} 
where $G \in \Gamma_{r,c}$ and $G'$ is any subgraph of $K_{\ib, \jb}$ which is isomorphic to $G$. We can also denote $\widetilde{p}^G$ the function $\widetilde{p}^G : (\ib, \jb) \longmapsto \widetilde{p}^G_{\ib, \jb}$.

Because there are $r! \binom{p}{r} c! \binom{q}{c} \lvert \Aut(G) \rvert^{-1}$ distinct subgraphs of $K_{\ib, \jb}$ that are isomorphic to $G \in \Gamma_{r,c}$, we obtain the following alternative decomposition
\begin{equation*}
    h_{\ib, \jb} = (p!q!)^{-1} \sum_{G \subseteq K_{\ib, \jb}} \bar{p}^{G} = \sum_{0 \le (r,c) \le (p,q)} \sum_{G \in \Gamma_{r,c}} \frac{1}{(p-r)! (q-c)! \lvert \Aut(G) \rvert} \widetilde{p}^G_{\ib, \jb}
\end{equation*}
and 
\begin{equation*}
     P^{r,c}_{m,n} = \sum_{G \in \Gamma_{r,c}} \frac{1}{(p-r)! (q-c)! \lvert \Aut(G) \rvert} \widetilde{P}^{G}_{m,n},
\end{equation*}
where for all $G \in \Gamma_{r,c}$, 
\begin{equation*}
    \widetilde{P}^{G}_{m,n} = \binom{m}{p}^{-1} \binom{n}{q}^{-1} \sum_{\substack{\ib \in \mathcal{P}_p([ m ])\\ \jb \in \mathcal{P}_q([ n ])}} \widetilde{p}^{G}_{\ib, \jb}
\end{equation*}
is the $U$-statistic of kernel $\widetilde{p}^G$. Finally, the $U_{m,n}$ can be rewritten as
\begin{equation}
    U_{m,n} = \sum_{0 \le (r,c) \le (p,q)} \sum_{G \in \Gamma_{r,c}} \frac{1}{(p-r)! (q-c)! \lvert \Aut(G) \rvert} \widetilde{P}^{G}_{m,n}.
    \label{eq:decomposition_ustat}
\end{equation}

\begin{remark}
    This decomposition is related to the one defined by \cite{leminh2025hoeffding}. The latter consists of an orthogonal projection of $h_{\ib, \jb} \in L_2(K_{\ib, \jb})$ on the subspaces $(\underline{L}_2(K_{\ib', \jb'}))_{\ib' \subseteq \ib, \jb' \subseteq \jb}$, where
\begin{equation}
    \underline{L}_2(K_{\ib, \jb}) = \{ X \in L_2(K_{\ib, \jb}) : \mathbb{E}[X \mid H(K_{\ib', \jb'})] = 0, \forall \ib' \subseteq \ib, \jb' \subseteq \jb \}.
    \label{eq:degenerate:old_decomposition}
\end{equation}
Comparing this with the subspaces~\eqref{eq:degenerate:subspace_definition}, we see that the decomposition on the subspaces of the form~\eqref{eq:degenerate:old_decomposition} is coarser, as they only consist in subspaces generated by graphs of the form $K_{\ib, \jb}$. For this reason, it does not capture the subtleties determining the limit distribution of degenerate $U$-statistics. We will see that the decomposition given by equation~\eqref{eq:decomposition_ustat} can fill this gap, at the cost of being more complex.
\end{remark}

\begin{table}[]
\normalsize

\raggedright{\textbf{Probability space decomposition}} 
\begin{equation*}
    L_2(G) \bigoplus^\perp_{F \subseteq G} L_2^*(F)
\end{equation*}

\begin{tabular}{|l|l|} 
\hline
$L_2(G)$ & Space of square-integrable variables measurable with respect to $\sigma(H(G))$ \\
$L_2^\ast(G)$ & Subspace of $L_2(G)$ with $\mathbb E[X\mid H(F)]=0$ for all $F\subset G$ \\

$p^F$ & Orthogonal projection on $L_2^\ast(F)$: for $X\in L_2(G)$, $F \subseteq G$, \\
& \qquad $p^\varnothing(X)=\mathbb E[X]$ and
$p^F(X)=\mathbb E[X\mid H(F)]-\sum_{F'\subset F} p^{F'}(X)$ \\
\hline
    \end{tabular}
    \vspace{1em} \\
\textbf{$U$-statistic decomposition} 
\begin{equation*}
    U_{m,n} = \sum_{(0,0) \le (r,c) \le (p,q)} P^{r,c}_{m,n} = \sum_{G \in \Gamma^{-}_{r,c}} \left[(p-r)! (q-c)! \lvert \Aut(G) \rvert \right]^{-1} \widetilde{P}^{G}_{m,n}
\end{equation*}
\begin{tabular}{|l|l|} 
\hline
$\widetilde p^{G}_{\ib,\jb}$ & Symmetrized, shape-indexed increment for $G\in\Gamma_{r,c}$:
$\sum_{F\subseteq K_{\ib,\jb},\, F\sim G} p^{F}$ \\
$\widetilde{P}^{G}_{m,n}$ & Contribution of the isomorphism class of $G$: \\
& \qquad $\widetilde{P}^{G}_{m,n} = \binom{m}{p}^{-1} \binom{n}{q}^{-1} \sum_{\ib\in\mathcal P_p([m])} \sum_{\jb\in\mathcal P_q([n])}  \widetilde{p}^{G}_{\ib, \jb}$ \\
$P^{r,c}_{m,n}$ & Contribution of size $(r,c)$ in $U_{m,n}$: \\
& \qquad $P^{r,c}_{m,n} = \sum_{G \in \Gamma_{r,c}} \left[(p-r)! (q-c)! \lvert \Aut(G) \rvert \right]^{-1} \widetilde{P}^{G}_{m,n}$ \\
\hline
\end{tabular}
    \caption{Principal notions and notations defined in Sections~\ref{sec:degenerate:decomposition_probability_space} and~\ref{sec:degenerate:decomposition_u}.}
    \label{tab:notations_2}
\end{table}

\subsection{Decomposition of the variance of $U$-statistics}
\label{sec:degenerate:variance}

Just like the classic Hoeffding decomposition of $U$-statistics of i.i.d. observations~\citep{hoeffding1961strong}, the decomposition~\eqref{eq:decomposition_ustat} is convenient to decompose the variance of $U$-statistics on row-column exchangeable matrices. The following two results come from the orthogonality of the projections. For a random variable $X$, $\mathbb{V}[X]$ denotes its variance. 

The first expression links $\mathbb{V}[U_{m,n}]$ to the variance of the projections $\mathbb{V}[p^G] = \mathbb{E}[(p^G)^2]$. It is obtained by direct calculation, as shown in Appendix~\ref{app:degenerate:proof_variance}.
\begin{proposition}
\begin{equation*}
    \mathbb{V}[U_{m,n}] = \sum_{(0,0) < (r,c) \le (p,q)} \frac{(m-r)!}{m!} \frac{(n-c)!}{n!} V^{(r,c)},
\end{equation*}
where for all $(0,0) < (r,c) \le (p,q)$,
\begin{equation*}
    V^{(r,c)} = \frac{p!^2 q!^2}{(p-r)!^2 (q-c)!^2} \sum_{G \in \Gamma_{r,c}} \lvert \Aut(G) \rvert^{-1} \mathbb{E}[(p^G)^2].
\end{equation*}
    \label{prop:degenerate:variance_ustat_ep}
\end{proposition}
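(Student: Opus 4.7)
The plan is to expand the variance directly using the orthogonal decomposition established in Theorem~\ref{th:degenerate:l2_ortho}, then reorganize the resulting sum by isomorphism classes. I would start from
\[
U_{m,n} - \mathbb{E}[U_{m,n}] = \binom{m}{p}^{-1}\binom{n}{q}^{-1} \sum_{\substack{\ib \in \mathcal{P}_p([m])\\ \jb \in \mathcal{P}_q([n])}} \sum_{\emptyset \neq G \subseteq K_{\ib,\jb}} p^{G},
\]
using $h_{\ib,\jb} = \sum_{G \subseteq K_{\ib,\jb}} p^{G}$ and peeling off the constant term $p^\emptyset$. The key bookkeeping step is to swap the order of summation: for each non-empty subgraph $G$ of $K_{[m],[n]}$ with $v_1(G)=r$ and $v_2(G)=c$, count the number of pairs $(\ib,\jb)$ such that $G \subseteq K_{\ib,\jb}$, which is $\binom{m-r}{p-r}\binom{n-c}{q-c}$, so that each $p^{G}$ appears with that multiplicity.

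Next, I would invoke orthogonality: by Theorem~\ref{th:degenerate:l2_ortho}, the family $\{p^{G}\}_{G}$ indexed by distinct non-empty subgraphs $G \subseteq K_{[m],[n]}$ lies in pairwise orthogonal subspaces $L_2^*(G)$, hence cross terms vanish when we square and take expectation. This yields
\[
\mathbb{V}[U_{m,n}] = \binom{m}{p}^{-2}\binom{n}{q}^{-2} \sum_{G}\binom{m-r}{p-r}^2 \binom{n-c}{q-c}^2 \mathbb{E}[(p^{G})^2].
\]

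I would then group the inner sum by isomorphism class. By the RCE invariance of the underlying AHK variables, $\mathbb{E}[(p^{G})^2]$ depends on $G$ only through its isomorphism class, so I can replace it by $\mathbb{E}[(p^{G^*})^2]$ with $G^* \in \Gamma_{r,c}$. A standard orbit-counting argument gives the number of subgraphs of $K_{[m],[n]}$ isomorphic to $G^*$, namely $\frac{m!\,n!}{(m-r)!(n-c)!\,|\Aut(G^*)|}$ (count embeddings $\phi_1:V_1(G^*)\hookrightarrow[m]$, $\phi_2:V_2(G^*)\hookrightarrow[n]$ and quotient by automorphisms). Substituting this count and simplifying the factorial expressions using $\binom{m}{p}^{-1}\binom{m-r}{p-r} = \frac{p!(m-r)!}{m!(p-r)!}$ (and the analogous identity in $n,q,c$) collapses the prefactor to $\frac{(m-r)!}{m!}\frac{(n-c)!}{n!} \cdot \frac{p!^2 q!^2}{(p-r)!^2 (q-c)!^2}$, which gives exactly the claimed form with $V^{(r,c)}$.

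I do not anticipate a deep obstacle. The only subtle point is keeping the three combinatorial factors straight simultaneously: the multiplicity of each labeled $G$ in the double sum over $(\ib,\jb)$, the orbit size counting labeled realizations of each unlabeled $G^* \in \Gamma_{r,c}$, and the $|\Aut(G^*)|$ correction. I would isolate these in separate lines and verify the final simplification by checking the two extreme cases $(r,c)=(p,q)$ (which should reproduce the ``non-degenerate'' leading term) and $(r,c)=(1,0)$ or $(0,1)$ (where $\Gamma_{1,0}$ and $\Gamma_{0,1}$ are singletons with trivial automorphism group), before writing the general formula.
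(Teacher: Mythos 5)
Your proposal is correct and follows essentially the same route as the paper's proof: both expand the variance via the pairwise orthogonality of the components $p^{G}$ for distinct subgraphs (so only diagonal terms $G=G'$ survive), count the multiplicity $\binom{m-r}{p-r}\binom{n-c}{q-c}$ with which each labeled $G$ occurs, group by isomorphism class using the orbit count $\tfrac{m!\,n!}{(m-r)!\,(n-c)!\,\lvert\Aut(G)\rvert}$, and simplify the factorials. The only cosmetic difference is that the paper first splits $\mathbb{V}[U_{m,n}]=\sum_{(r,c)}\mathbb{V}[P^{r,c}_{m,n}]$ before expanding, whereas you expand globally and regroup at the end; the combinatorics are identical.
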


The second expression links $\mathbb{V}[U_{m,n}]$ to the variance of the $U$-statistics $\widetilde{P}^G_{m,n}$ associated to the symmetrized projections $\widetilde{p}^G$.
\begin{corollary}
\begin{equation*}
\begin{split}
    \mathbb{V}[U_{m,n}] &= \sum_{0 < (r,c) \le (p,q)} \sum_{G \in \Gamma_{r,c}} \left(\frac{1}{(p-r)! (q-c)! \lvert \Aut(G) \rvert}\right)^2 \mathbb{V}[\widetilde{P}^G_{m,n}]. \\
\end{split}
\end{equation*}
\end{corollary}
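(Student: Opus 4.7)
The plan is to expand $\mathbb{V}[U_{m,n}]$ via the decomposition~\eqref{eq:decomposition_ustat} and bilinearity of covariance, then kill all off-diagonal cross terms by orthogonality. Write $\alpha_G := [(p-r)!(q-c)!\lvert\Aut(G)\rvert]^{-1}$ for $G\in\Gamma_{r,c}$. The $(r,c)=(0,0)$ term of~\eqref{eq:decomposition_ustat} reduces to the constant $p^\emptyset=\mathbb{E}[h_{[p],[q]}]$ and so contributes nothing to the variance. Expanding what remains gives
\begin{equation*}
\mathbb{V}[U_{m,n}] = \sum_{\substack{(0,0)<(r,c)\\ G\in\Gamma_{r,c}}}\ \sum_{\substack{(0,0)<(r',c')\\ G'\in\Gamma_{r',c'}}} \alpha_G\,\alpha_{G'}\,\Cov\!\left(\widetilde{P}^G_{m,n},\widetilde{P}^{G'}_{m,n}\right),
\end{equation*}
so the claim reduces to showing that $\Cov(\widetilde{P}^G_{m,n},\widetilde{P}^{G'}_{m,n})=0$ whenever $G\neq G'$.

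To establish this, I would unroll the definition $\widetilde{p}^G_{\ib,\jb}=\sum_{F\subseteq K_{\ib,\jb},\,F\sim G} p^F$, so that each $\widetilde{P}^G_{m,n}$ is a finite linear combination of terms $p^F$ with $F\sim G$. By Proposition~\ref{prop:degenerate:l2_direct_sum}, $p^F\in L_2^*(F)$, and since $F\neq\emptyset$, this forces $\mathbb{E}[p^F]=0$; hence $\mathbb{E}[\widetilde{P}^G_{m,n}]=0$ and the covariances reduce to mixed second moments. Theorem~\ref{th:degenerate:l2_ortho} yields $\mathbb{E}[p^{F_1} p^{F_2}]=0$ whenever $F_1\neq F_2$ as labeled graphs, so it suffices to check that any $F\sim G$ and $F'\sim G'$ with $G\not\sim G'$ satisfy $F\neq F'$. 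This is immediate, since $F=F'$ would give $G\sim F=F'\sim G'$, contradicting $G\not\sim G'$. The hypothesis $G\not\sim G'$ covers exactly the case $G\neq G'$: within a single $\Gamma_{r,c}$, minimality ensures distinct representatives are non-isomorphic, while graphs drawn from different $\Gamma_{r,c}$ and $\Gamma_{r',c'}$ cannot be isomorphic since they have different numbers of row or column vertices. Hence only the diagonal terms survive, yielding the claimed formula with $\alpha_G^2$ in front of $\mathbb{V}[\widetilde{P}^G_{m,n}]$.

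The main obstacle is conceptual rather than computational, and amounts to carefully distinguishing labeled subgraphs $F$ (for which Theorem~\ref{th:degenerate:l2_ortho} provides orthogonality) from isomorphism-class representatives $G\in\Gamma_{r,c}$ (which index the symmetrized objects $\widetilde{P}^G_{m,n}$), and verifying that orthogonality transfers faithfully from one to the other. No non-trivial calculation is required: everything follows from the orthogonal direct sum structure established in Theorem~\ref{th:degenerate:l2_ortho} combined with the definitions given in Section~\ref{sec:degenerate:decomposition_u}.
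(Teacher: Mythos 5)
Your proof is correct, but it takes a genuinely different route from the paper's. The paper obtains the corollary by combining Proposition~\ref{prop:degenerate:variance_ustat_ep} with the explicit formula of Lemma~\ref{lem:degenerate:var_pg} for $\mathbb{V}[\widetilde{P}^G_{m,n}]$ (itself resting on the counting Lemma~\ref{lem:degenerate:count_equal_graphs}): substituting that formula into the sum $\sum_G \alpha_G^2 \mathbb{V}[\widetilde{P}^G_{m,n}]$ reproduces the terms $\frac{(m-r)!}{m!}\frac{(n-c)!}{n!}V^{(r,c)}$, so the identity is verified by matching two already-computed closed forms. You instead prove the structural fact directly: expanding $U_{m,n}$ by~\eqref{eq:decomposition_ustat}, discarding the constant $(0,0)$ term, and showing $\Cov(\widetilde{P}^G_{m,n},\widetilde{P}^{G'}_{m,n})=0$ for $G\neq G'$ because each $\widetilde{P}^G_{m,n}$ is a linear combination of projections $p^F$ with $F\sim G$, distinct isomorphism classes can share no labeled graph $F$, and Proposition~\ref{prop:degenerate:l2_direct_sum} together with Theorem~\ref{th:degenerate:l2_ortho} gives $\mathbb{E}[p^{F_1}p^{F_2}]=0$ for $F_1\neq F_2$ (with $\mathbb{E}[p^F]=0$ for $F\neq\emptyset$ turning covariances into mixed moments). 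Your care in separating labeled subgraphs from isomorphism-class representatives is exactly the point that needs checking, and you handle it correctly. Your argument is more elementary and self-contained — it needs no combinatorial counting and exhibits the corollary as a pure Pythagoras identity — whereas the paper's route produces the explicit value of each $\mathbb{V}[\widetilde{P}^G_{m,n}]$ as a by-product, which is needed later (e.g.\ in Lemma~\ref{lem:degenerate:convergence_gaussian_projection} and the variance constants of the limit theorems). So your proof establishes the corollary on its own, but does not replace Lemma~\ref{lem:degenerate:var_pg} for the rest of the paper.
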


It can be naturally obtained from Proposition~\ref{prop:degenerate:variance_ustat_ep} using the following lemma. 
\begin{lemma}
    \begin{equation*}
        \mathbb{V}[\widetilde{P}^{G}_{m,n}] = \frac{(m-r)!}{m!} \frac{(n-c)!}{n!} p!^2 q!^2 \rvert \Aut(G) \lvert \mathbb{E}[(p^{G})^2].
    \end{equation*}
    \label{lem:degenerate:var_pg}
\end{lemma}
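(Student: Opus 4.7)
The plan is to expand the variance of $\widetilde P^{G}_{m,n}$ as a double sum indexed by pairs of row--column sets, then collapse it to a diagonal sum via Theorem~\ref{th:degenerate:l2_ortho}, and finally carry out the combinatorial bookkeeping. From the definition
\begin{equation*}
\widetilde P^{G}_{m,n} = \binom{m}{p}^{-1}\binom{n}{q}^{-1}\sum_{\ib\in\mathcal P_p([m])}\sum_{\jb\in\mathcal P_q([n])} \widetilde p^{G}_{\ib,\jb},
\end{equation*}
I would first rewrite $\widetilde p^{G}_{\ib,\jb}=\sum_{\Phi\in\mathbb S_p\times\mathbb S_q}p^{\Phi G_0}$ for a fixed representative $G_0\subseteq K_{\ib,\jb}$ with $G_0\sim G$, viewing the symmetrization as a sum \emph{with multiplicity}. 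Since the stabiliser of $G_0$ has size $|\Aut(G)|$ and its orbit has size $\frac{p!\,q!}{(p-r)!(q-c)!\,|\Aut(G)|}$, each distinct subgraph $F\subseteq K_{\ib,\jb}$ with $F\sim G$ appears exactly $(p-r)!(q-c)!\,|\Aut(G)|$ times in $\widetilde p^{G}_{\ib,\jb}$.

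Next, I would expand
\begin{equation*}
\mathbb V[\widetilde P^{G}_{m,n}] = \binom{m}{p}^{-2}\binom{n}{q}^{-2} \sum_{\ib_1,\jb_1}\sum_{\ib_2,\jb_2} \mathbb E\bigl[\widetilde p^{G}_{\ib_1,\jb_1}\widetilde p^{G}_{\ib_2,\jb_2}\bigr],
\end{equation*}
using that each $p^F$ is centred so covariances equal expectations of products. The key simplification is Theorem~\ref{th:degenerate:l2_ortho}: for any two subgraphs $F_1\neq F_2$ one has $p^{F_1}\in L_2^{*}(F_1)$ and $p^{F_2}\in L_2^{*}(F_2)$, hence $\mathbb E[p^{F_1}p^{F_2}]=0$. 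Therefore only the diagonal pairs $F_1=F_2=:F$ survive, and these require $F\subseteq K_{\ib_1,\jb_1}\cap K_{\ib_2,\jb_2}$, i.e.\ $V_1(F)\subseteq\ib_1\cap\ib_2$ and $V_2(F)\subseteq\jb_1\cap\jb_2$.

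Switching the order of summation, I would fix an abstract $F\sim G$ with $V_1(F)\subseteq[m]$, $V_2(F)\subseteq[n]$ and count:
\begin{itemize}
\item the number of index sets $(\ib_1,\jb_1,\ib_2,\jb_2)$ containing $V_1(F)$ and $V_2(F)$, which factorises as $\binom{m-r}{p-r}^2\binom{n-c}{q-c}^2$;
\item the multiplicity with which $F$ occurs in each $\widetilde p^{G}_{\ib_i,\jb_i}$, namely $(p-r)!(q-c)!\,|\Aut(G)|$ as computed above, giving a squared factor;
\item the number of such $F$, equal to $\binom{m}{r}\binom{n}{c}\cdot\frac{r!\,c!}{|\Aut(G)|}=\frac{m!\,n!}{(m-r)!(n-c)!\,|\Aut(G)|}$.
\end{itemize}
Each $\mathbb E[(p^F)^2]$ equals $\mathbb E[(p^G)^2]$ by the exchangeability of the AHK variables, so it factors out. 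Using the identity $\binom{m}{p}^{-1}\binom{m-r}{p-r}(p-r)!=\frac{p!(m-r)!}{m!}$ (and its analogue in $n,q,c$) to absorb the normalisations, all combinatorial factors simplify to the claimed expression.

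The only delicate point is keeping the multiplicities consistent between the symmetrized form of $\widetilde p^{G}_{\ib,\jb}$ and the orbit-stabiliser accounting of the graphs $F\sim G$; a quick sanity check is that substituting the resulting formula into the corollary of Section~\ref{sec:degenerate:variance} must recover exactly Proposition~\ref{prop:degenerate:variance_ustat_ep}, which it does.
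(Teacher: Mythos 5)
Your proposal is correct and follows essentially the same route as the paper: expand $\mathbb V[\widetilde P^{G}_{m,n}]$ as a double sum, use the orthogonality of Theorem~\ref{th:degenerate:l2_ortho} to retain only coinciding graphs, and then count; your tally (number of isomorphic copies $F$ times $\binom{m-r}{p-r}^2\binom{n-c}{q-c}^2$ times the squared multiplicity $[(p-r)!(q-c)!\,|\Aut(G)|]^2$) reproduces exactly the count $\frac{m!(m-r)!}{(m-p)!^2}\frac{n!(n-c)!}{(n-q)!^2}|\Aut(G)|$ that the paper obtains via Lemma~\ref{lem:degenerate:count_equal_graphs}. The only difference is organizational — you fix the common subgraph first and use orbit–stabilizer accounting, whereas the paper enumerates the tuples $(\ib_i,\jb_i,\Phi_i)$ directly — and you correctly handle the multiplicity with which each distinct subgraph appears in the symmetrized $\widetilde p^{G}_{\ib,\jb}$.
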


The proof of this lemma requires to handle the symmetrized projections, which can be tricky. In this regard, the next lemma is particularly helpful. For this reason, it will also be used several times later. The proofs of both lemmas are given in Appendix~\ref{app:degenerate:proof_variance}.

\begin{lemma}
    Let $G$ subgraph of $K_{p,q}$. Let $(G^1_{\ib, \jb})_{\substack{\ib \in \mathcal{P}_p([ m ]) \\ \jb \in \mathcal{P}_q([ n ])}}$ and $(G^2_{\ib, \jb})_{\substack{\ib \in \mathcal{P}_p([ m ]) \\ \jb \in \mathcal{P}_q([ n ])}}$ two families of graphs such that for all $(\ib, \jb) \in \mathcal{P}_p([ m ]) \times \mathcal{P}_q([ n ])$, both $G^1_{\ib, \jb}, G^2_{\ib, \jb} \subseteq K_{\ib, \jb}$ and are isomorphic to $G$. We have
\begin{equation*}
    \sum_{\substack{\ib_1, \ib_2 \in \mathcal{P}_p([ m ]) \\ \jb_1, \jb_2 \in \mathcal{P}_q([ n ])}} \sum_{\Phi_1, \Phi_2 \in \mathbb{S}_p \times \mathbb{S}_q} \mathds{1}(\Phi_1 G^1_{\ib_1, \jb_1} = \Phi_2 G^2_{\ib_2, \jb_2}) = \frac{m! (m-r)!}{(m-p)!^2} \frac{n! (n-c)!}{(n-q)!^2} \lvert \Aut(G) \rvert.
\end{equation*}    
    \label{lem:degenerate:count_equal_graphs}
\end{lemma}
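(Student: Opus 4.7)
The plan is a double-counting argument: stratify the sum by the common value $H := \Phi_1 G^1_{\ib_1,\jb_1} = \Phi_2 G^2_{\ib_2,\jb_2}$, which is necessarily a subgraph of $K_{m,n}$ isomorphic to $G$, then count independently how many tuples $(\ib_k, \jb_k, \Phi_k)$ realize each fixed $H$ on each side, and finally sum over $H$.

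First I would count the admissible $H$: the number of subgraphs of $K_{m,n}$ isomorphic to $G$ equals $\dfrac{1}{|\Aut(G)|}\dfrac{m!}{(m-r)!}\dfrac{n!}{(n-c)!}$, since there are $m!/(m-r)!$ injections $V_1(G)\hookrightarrow [m]$ and $n!/(n-c)!$ injections $V_2(G)\hookrightarrow [n]$, identified up to the automorphism group of $G$. Because $G^1$ and $G^2$ enter the sum symmetrically, it then suffices to count, for a single fixed $H$ and a single side $k\in\{1,2\}$, the number of triples $(\ib_k,\jb_k,\Phi_k)$ with $\Phi_k G^k_{\ib_k,\jb_k}=H$, and to square that count before summing on $H$.

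For a fixed $H$, the equation $\Phi_k G^k_{\ib_k,\jb_k}=H$ forces $V_1(H)\subseteq \ib_k$ and $V_2(H)\subseteq \jb_k$, so the number of admissible $(\ib_k,\jb_k)$ is $\binom{m-r}{p-r}\binom{n-c}{q-c}$. Given such $(\ib_k,\jb_k)$, the subgraph $G^k_{\ib_k,\jb_k}$ is determined and isomorphic to $G$; the restriction of $\Phi_k$ to $V_1(G^k_{\ib_k,\jb_k})\cup V_2(G^k_{\ib_k,\jb_k})$ must be a bipartite graph isomorphism onto $H$, yielding $|\Aut(G)|$ choices, and the restriction to the complement must be an arbitrary bijection $\ib_k\setminus V_1(G^k_{\ib_k,\jb_k})\to \ib_k\setminus V_1(H)$ and likewise for columns, giving an extra factor $(p-r)!(q-c)!$. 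Multiplying these gives $\binom{m-r}{p-r}(p-r)!\binom{n-c}{q-c}(q-c)!|\Aut(G)| = \tfrac{(m-r)!}{(m-p)!}\tfrac{(n-c)!}{(n-q)!}|\Aut(G)|$ triples per side.

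Combining, the full sum equals
\begin{equation*}
\underbrace{\frac{1}{|\Aut(G)|}\frac{m!}{(m-r)!}\frac{n!}{(n-c)!}}_{\#\{H\}}\;\cdot\;\left(\frac{(m-r)!}{(m-p)!}\frac{(n-c)!}{(n-q)!}|\Aut(G)|\right)^{\!2},
\end{equation*}
which simplifies to $\dfrac{m!\,(m-r)!}{(m-p)!^2}\,\dfrac{n!\,(n-c)!}{(n-q)!^2}\,|\Aut(G)|$, as claimed. The only delicate point, and the one I would write out most carefully, is the extension-of-isomorphism step: one must justify that the number of graph isomorphisms $G^k_{\ib_k,\jb_k}\to H$ between two labeled copies of $G$ is $|\Aut(G)|$ and that every such isomorphism extends to a pair of full permutations of $\ib_k$ and $\jb_k$ in exactly $(p-r)!(q-c)!$ ways, with no overcounting. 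Everything else is routine factorial bookkeeping.
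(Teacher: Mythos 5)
Your proof is correct and rests on the same double-counting as the paper: the identical per-side count $\binom{m-r}{p-r}\binom{n-c}{q-c}\,(p-r)!\,(q-c)!\,\lvert\Aut(G)\rvert$ of triples realizing a prescribed target graph. The only organizational difference is that you stratify over the common image $H$ (and therefore also need the count of copies of $G$ in $K_{m,n}$, whose $\lvert\Aut(G)\rvert^{-1}$ factor cancels), whereas the paper fixes $(\ib_1,\jb_1,\Phi_1)$ arbitrarily and counts the matching $(\ib_2,\jb_2,\Phi_2)$; both yield the stated product, and the extension-of-isomorphism step you flag is exactly the point the paper also relies on.
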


\section{Asymptotic behavior}

In this section, we first define the notion of principal part of a $U$-statistic on RCE matrices, and the associated notions of principal degree and support graphs. Then, we use these notions to study and characterize the asymptotic behavior of $U$-statistics. In particular, we find that connected principal support graphs lead to Gaussian limits. Finally, we investigate the asymptotic setting where the row and column numbers have different growth rates and generalize our results to asymmetric kernels.

\subsection{Principal part and support graphs}
\label{sec:degenerate:principal_part}

Let us define a sequence for network sizes $(m_N, n_N)$ such that $m_N + n_N = N$ and $m_N/N \xrightarrow[N \rightarrow \infty]{} \rho$, for some $\rho \in ]0,1[$. We denote $U_N := U_{m_N,n_N}$, $P^{r,c}_N := P^{r,c}_{m_N,n_N}$ and $\widetilde{P}^G_{N} := \widetilde{P}^G_{m_N, n_N}$. The kernel $h$ is still a symmetric function of a matrix of size $p \times q$. Other regimes for $m_N$ and $n_N$ are considered in Section~\ref{sec:degenerate:other_asymptotic_frameworks}. In this asymptotic framework, we give the following definitions. 

\begin{definition}
Let 
\begin{equation*}
    p^{(k)} := \sum_{\substack{G \subseteq K_{p,q} \\ v_1(G) + v_2(G) = k}} p^G,
\end{equation*}
for $1 \le k \le p+q$. Let $d$ be the smallest integer such that $p^{(d)} \neq 0$. Then, we have $P^{r,c}_{N} = 0$ for all $(r,c)$ such that $r+c < d$. 
\begin{itemize}
    \item We call $d-1$ the \textnormal{order of degeneracy} of $U_N$. 
    \item By analogy with the theory of generalized $U$-statistics~\citep{janson1991asymptotic}, we call 
    \begin{equation*}
        \sum_{\substack{(0,0) \le (r,c) \le (p,q) \\ r+c = d}} P^{r,c}_{N}
    \end{equation*}
    the \textnormal{principal part} of $U_N$ and the pairs $(r,c)$ such that $r+c=d$ are the \textnormal{principal degrees} of $U_N$.
    \item We call the \textnormal{principal support graphs} of $U_N$ the graphs $G \subseteq K_{m,n}$ such that both
\begin{itemize}
    \item $v_1(G) + v_2(G) = d$,
    \item $p^G \neq 0$.
\end{itemize}
\end{itemize} 
\end{definition}

In the following examples, we identify the order of degeneracy and the principal support graphs of several $U$-statistics on row-column exchangeable matrices. 

\begin{figure}[!t]
\centering
\includegraphics[width=0.15\linewidth]{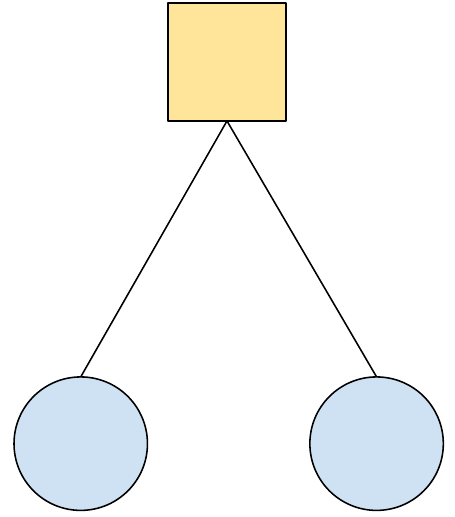}
\hspace{8em}
\includegraphics[width=0.15\linewidth]{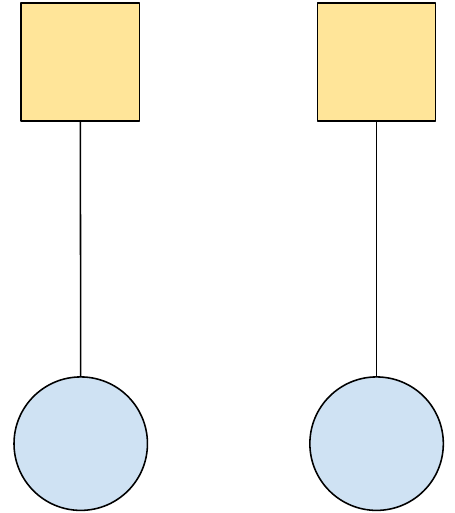}
\caption[Examples of principal support graphs for $U^{h_1}_N$ and $U^{h_2}_N$.]{Examples of principal support graphs for $U^{h_1}_N$ (left) and $U^{h_2}_N$ (right) when $Y_{ij} \overset{i.i.d.}{\sim} \mathcal{N}(0,1)$, as depicted in Running example~\ref{ex:1}. The principal support graphs of $U^{h_1}_N$ are the $1 \times 2$ graphs that are isomorphic to the left one. The principal support graphs of $U^{h_2}_N$ are the $2 \times 2$ graphs containing graphs that are isomorphic to the right one.
}
\label{fig:degenerate:principal_support}
\end{figure}

\begin{customexa}{A}[Same-row co-engagement ($K_{1,2}$)]
    The Erd\H{o}s--Rényi graph model is a binary random graph model where each edge has a fixed probability of being present, independently of the other edges \citep{erdos1959random, gilbert1959random}. It can easily be generalized to weighted graphs by defining a distribution from which the edge weights are sampled at i.i.d. 
    
    Let $Y$ be a random matrix such that $Y_{ij} \overset{i.i.d.}{\sim} \mathcal{N}(0,1)$. Let $h_1$ and $h_2$ be the kernel functions defined by $h_1(Y_{\{1\}, \{1,2\}}) = Y_{11} Y_{12}$ and $h_2(Y_{\{1,2\}, \{1,2\}}) = (Y_{11} Y_{22} + Y_{12}Y_{21})/2$, and $U^{h_1}_N$ and $U^{h_2}_N$ are the $U$-statistics associated to these kernels. \\
    $Y$ admits a natural AHK representation, which is $Y_{ij} \overset{a.s.}{=} \phi(\xi_i, \eta_j, \zeta_{ij}) = \Phi^{-1}(\zeta_{ij})$, where $\Phi^{-1}$ is the inverse c.d.f. of the standard Gaussian distribution. Remarkably, $Y_{ij}$ does not depend on the AHK variables $\xi_i$ and $\eta_j$. We have $\mathbb{E}[ Y_{ij}] = \mathbb{E}[ Y_{ij} \mid \xi_i] = \mathbb{E}[ Y_{ij} \mid \eta_j] = \mathbb{E}[ Y_{ij} \mid \xi_i, \eta_j] = 0$ and $\mathbb{E}[ Y_{ij} \mid \xi_i, \eta_j, \zeta_{ij}] = Y_{ij}$.  
    \begin{itemize}
        \item For $U^{h_1}_N$, $\mathbb{E}[h_1(Y_{\{1\}, \{1,2\}}) \mid H(G) ] \neq 0$ if and only if $$H(K_{1,2}) = (\xi_1, \eta_1, \eta_2, \zeta_{11}, \zeta_{12}) \subseteq H(G).$$ 
        Indeed, we have for all $G \subset K_{1,2}$, $\mathbb{E}[h_1(Y_{\{1\}, \{1,2\}}) \mid H(G) ] = 0$ and $\mathbb{E}[h_1(Y_{\{1\}, \{1,2\}}) \mid H(K_{1,2}) ] = Y_{11} Y_{12}$. Therefore, the only graph $G \subseteq K_{1,2}$ such that $p^G \neq 0$ is $G = K_{1,2}$. Thus, $U^{h_1}_N$ is degenerate of order $2$ and the family of principal support graphs of $U^{h_1}_N$ is $(K_{\ib, \jb})_{\ib \in \mathcal{P}_1([ m_N ]), \jb \in \mathcal{P}_2([ n_N ])}$ (Fig.~\ref{fig:degenerate:principal_support}).
        \item For $U^{h_2}_N$, $\mathbb{E}[h_2(Y_{\{1,2\}, \{1,2\}}) \mid H(G) ] \neq 0$ if and only if $$(\xi_1, \xi_2, \eta_1, \eta_2, \zeta_{11}, \zeta_{22}) \subseteq H(G) \text{\quad or \quad} (\xi_1, \xi_2, \eta_1, \eta_2, \zeta_{12}, \zeta_{21}) \subseteq H(G).$$ 
        Therefore, if $\mathbb{E}[h_2(Y_{\{1,2\}, \{1,2\}}) \mid H(G) ] \neq 0$, then $v_1(G) = 2$ or $v_2(G) = 2$, so $U^{h_2}_N$ is degenerate of order 3. The principal support graphs are the graphs that are isomorphic to one graph $G \subseteq \Gamma_{2,2}$ such that $\mathbb{E}[h_2(Y_{\{1,2\}, \{1,2\}}) \mid H(G) ] \neq 0$ (Fig.~\ref{fig:degenerate:principal_support}).
    \end{itemize}
    \label{ex:1}
\end{customexa}

\begin{customexa}{B}[Row heterogeneity]
    Let $Y$ be a random matrix sampled from the following RCE dissociated model: for $\lambda > 0$,
\begin{align}
        &\xi_i \overset{i.i.d.}{\sim} \mathcal{U}[0,1], &\forall 1 \le i \le m, \nonumber \\
        &\eta_j \overset{i.i.d.}{\sim} \mathcal{U}[0,1], &\forall 1 \le j \le n, \label{eq:poisson_bedd} \\ 
        &Y_{ij} \mid \xi_i, \eta_j \sim \mathcal{P}(\lambda f(\xi_i) g(\eta_j)), &\forall 1 \le i \le m, 1 \le j \le n. \nonumber
\end{align}
This describes the Poisson Bipartite Expected Degree Distribution (Poisson-BEDD) model \citep{ouadah2022motif, leminh2023ustatistics}. This model is a type of weighted bipartite graphon model~\citep{diaconis2008graph}, where the graphon function has a product form. It is defined by a density parameter $\lambda$ and functions $f : [0,1] \rightarrow \mathbb{R}$ and $g : [0,1] \rightarrow \mathbb{R}$ representing the expected degree distributions of the rows and the columns respectively. We assume that $\int f = \int g = 1$, so that the mean intensity of the network is $\mathbb{E}[Y_{ij}] = \lambda \int f \int g = \lambda$. The expected degree of the $i$-th row node is $\mathbb{E}[\sum_{j=1}^n Y_{ij} \mid \xi_i] = n \lambda f(\xi_i)$ and the expected degree of the $j$-th column node is $\mathbb{E}[\sum_{i=1}^m Y_{ij} \mid \eta_j] = m \lambda g(\eta_j)$. 

Suppose that we are interested in testing if the row degrees are homogeneous, i.e. $f \equiv 1$. For that, let us define the null hypothesis $\mathcal{H}_0 : f \equiv 1$ and confront it to $\mathcal{H}_1 : f \not\equiv 1$. 
The quantity $F_2 := \int f^2$ is related to the variance of the row expected degree distribution. We may use its estimate to test this hypothesis. Indeed, under $\mathcal{H}_0$, we have $F_2 = 1$ and otherwise, $F_2 > 1$. Consider the kernels $h_1$ and $h_2$ defined in Running example~\ref{ex:1}. Now, in the Poisson-BEDD model, they have expectations $\mathbb{E}[h_1(Y_{\{1\}, \{1,2\}}) ] = \lambda^2 F_2$ and $\mathbb{E}[h_2(Y_{\{1, 2\}, \{1,2\}})] = \lambda^2$. Therefore, 
\begin{equation*}
    U^{h_3}_N := U^{h_1}_N - U^{h_2}_N
\end{equation*} 
is also a $U$-statistic, associated to the kernel $h$ defined by 
\begin{equation*}
    h_3(Y_{\{1,2\}, \{1,2\}}) = \frac{1}{2}\left[h_1(Y_{\{1\}, \{1,2\}}) + h_1(Y_{\{2\}, \{1,2\}})\right] - h_2(Y_{\{1,2\}, \{1,2\}}),
\end{equation*}
centered around 
\begin{equation*}
    \mathbb{E}[U^{h_3}_N] = \lambda^2 (F_2 - 1)
\end{equation*}
which is equal to $0$ under $\mathcal{H}_0$ only.

We remark that 
\begin{equation*}
\begin{split}
    \mathbb{E}[h_3(Y_{\{1,2\},\{1,2\}}) \mid \xi_1] &= \frac{1}{2}\mathbb{E}[Y_{11}Y_{12} + Y_{21}Y_{22} - Y_{11}Y_{22} - Y_{21}Y_{12} \mid \xi_1] \\
    &= \frac{\lambda^2}{2} (f(\xi_1)^2 + F_2 - 2 f(\xi_1)),
\end{split}
\end{equation*}
and
\begin{equation*}
\begin{split}
    \mathbb{E}[h_3(Y_{\{1,2\},\{1,2\}}) \mid \eta_1] &= \frac{1}{2}\mathbb{E}[Y_{11}Y_{12} + Y_{21}Y_{22} - Y_{11}Y_{22} - Y_{21}Y_{12} \mid \eta_1] \\
    &= \lambda^2 (F_2 - 1) g(\eta_1).
\end{split}
\end{equation*}
Since $\mathbb{E}[h_3(Y_{\{1,2\},\{1,2\}}) \mid \xi_1] = \mathbb{E}[h_3(Y_{\{1,2\},\{1,2\}}) \mid \eta_1] = 0$ when $f \equiv 1$, this means that $U^{h_3}_N$ is degenerate of order at least 1 under $\mathcal{H}_0$. 

In order to find the principal support graphs of $U^{h_3}_N$, we can check if $\mathbb{E}[h_3(Y_{\{1,2\},\{1,2\}}) \mid H(G)] \neq 0$, first for graphs $G \in \cup_{r+c=2}\Gamma_{r,c}$. In fact, there are only four graphs in $\cup_{r+c=2}\Gamma_{r,c}$. Their corresponding conditional expectations $\mathbb{E}[h_3(Y_{\{1,2\},\{1,2\}}) \mid H(G)]$ are calculated in Lemmas~\ref{lem:degenerate:condexp_1} to~\ref{lem:degenerate:condexp_2}. Under $\mathcal{H}_0$, they become 
\begin{itemize}
    \item $\mathbb{E}[h_3(Y_{\{1,2\},\{1,2\}}) \mid \xi_1, \xi_2] = 0$,
    \item $\mathbb{E}[h_3(Y_{\{1,2\},\{1,2\}}) \mid \eta_1, \eta_2] = 0$,
    \item $\mathbb{E}[h_3(Y_{\{1,2\},\{1,2\}}) \mid \xi_1, \eta_1] = 0$,
    \item $\mathbb{E}[h_3(Y_{\{1,2\},\{1,2\}}) \mid \xi_1, \eta_1, \zeta_{11}] = 0$.
\end{itemize}
Since there are no graph of $\cup_{r+c=2}\Gamma_{r,c}$ such that $\mathbb{E}[h_3(Y_{\{1,2\},\{1,2\}}) \mid H(G)] \neq 0$, that means that $U^{h_3}_N$ is degenerate of order at least $2$.

Next, we check if $\mathbb{E}[h_3(Y_{\{1,2\},\{1,2\}}) \mid H(G)] \neq 0$, for graphs $G \in \cup_{r+c=3} \Gamma_{r,c}$. There are six graphs in $\cup_{r+c=3} \Gamma_{r,c}$. According to Lemmas~\ref{lem:degenerate:condexp_3} to~\ref{lem:degenerate:condexp_4}, we have under $\mathcal{H}_0$,
\begin{itemize}
    \item $\mathbb{E}[h_3(Y_{\{1,2\},\{1,2\}}) \mid \xi_1, \xi_2, \eta_1] = 0$,
    \item $\mathbb{E}[h_3(Y_{\{1,2\},\{1,2\}}) \mid \xi_1, \xi_2, \eta_1, \zeta_{11}] = 0$,
    \item $\mathbb{E}[h_3(Y_{\{1,2\},\{1,2\}}) \mid \xi_1, \xi_2, \eta_1, \zeta_{11}, \zeta_{21}] = 0$,
    \item $\mathbb{E}[h_3(Y_{\{1,2\},\{1,2\}}) \mid \xi_1, \eta_1, \eta_2] = 0$,
    \item $\mathbb{E}[h_3(Y_{\{1,2\},\{1,2\}}) \mid \xi_1, \eta_1, \eta_2, \zeta_{11}] = 0$,
    \item $\mathbb{E}[h_3(Y_{\{1,2\},\{1,2\}}) \mid \xi_1, \eta_1, \eta_2, \zeta_{11}, \zeta_{12}] = (Y_{11} Y_{12} + \lambda^2 g(\eta_1) g(\eta_2) - \lambda g(V_2) Y_{11} - \lambda g(V_1) Y_{12})/2 \neq 0$.
\end{itemize}
Therefore, there is one (and only one) graph $G$ satisfying this condition, so we can conclude that the order of degeneracy of $U_N$ is $2$. This graph is the one such that $H(G) = (\xi_1, \eta_1, \eta_2, \zeta_{11}, \zeta_{12})$, which means that $G = K_{1,2}$. Thus, the principal support graphs of $U^{h_3}_N$ are the graphs $(K_{\ib, \jb})_{\ib \in \mathcal{P}_1([ m_N ]), \jb \in \mathcal{P}_2([ n_N ])}$.
\label{ex:2}
\end{customexa}

\begin{customexa}{C}[Overdispersion beyond row-column effects]
    Although the Poisson distribution is often used to model count data, it comes with the limitation that the mean and variance of the counts are equal. For this reason, the Poisson distribution is not an appropriate choice to represent count data when it shows an overly large dispersion. Therefore, for some given dataset, it is interesting to assess whether the Poisson distribution fits the count data, i.e. whether the counts exhibit too much dispersion. However, for network count data, apparent overdispersion can arise from the row and column heterogeneity. A Poisson-graphon model such as the Poisson-BEDD model depicted in the previous example by~\eqref{eq:poisson_bedd} would be able to capture this effect. Indeed, in the Poisson-BEDD model, we have 
    \begin{equation}
        \mathbb{V}[Y_{ij}] = \mathbb{E}[Y_{ij}^2] - \mathbb{E}[Y_{ij}]^2 = \mathbb{E}[\lambda^2 f(\xi_i)^2 g(\eta_j)^2] + \lambda - \lambda^2  = \lambda^2 (F_2 G_2 - 1) + \lambda > \mathbb{E}[Y_{ij}],
        \label{eq:var_bedd}
    \end{equation}
    where $F_2 = \int f^2$ and $G_2 = \int g^2$. Now, in the following example, we build a test for this type of model to decide whether there is an overdispersion effect beyond the effect of the rows and columns, e.g. whether the conditional counts $Y_{ij} \mid \xi_i, \eta_j$ show overdispersion.
    
    Typically, overdispersed count data $(Z_i)_{1 \le i \le n}$ are better represented by families of distributions including a parameter controlling the variance separately from the mean, such as a negative binomial distribution, rather than a classic Poisson distribution. For example, the negative binomial representation $Z_i \overset{i.i.d.}{\sim} \mathcal{NB}(\lambda, \alpha)$ can be parameterized such that the mean is $\lambda$ and the variance is $\lambda + \lambda^2 \alpha$. The parameter $\alpha$ is then called the \textit{dispersion parameter}. In fact, the negative binomial distribution can be viewed as a Gamma-Poisson mixture, that is for $1 \le i \le n$, $Z_i \mid W_i \sim \mathcal{P}(\lambda W_i)$, where $W_1, \dots, W_n$ are independently drawn random variables from the Gamma distribution with mean $1$ and variance $\alpha$. 
    
    Here, we use a more generic approach to represent overdispersed data with Poisson mixtures where, contrasting with the negative binomial case, the distribution of the latent variables $(W_i)_{i \ge 1}$ is not necessarily Gamma, but some distribution $\mathcal{L}(\alpha)$ on the nonnegative real numbers with mean 1 and variance $\alpha$. This is sufficient to obtain $\mathbb{E}[Z_{i}] = \lambda$ and $\mathbb{V}[Z_{i}] = \lambda + \lambda^2 \alpha$.
    
    As for network count data, the Overdispersed-Poisson-BEDD model can be defined to capture a dispersion effect, separately from the row and column effects, for $\lambda > 0$, $\alpha \ge 0$,
    \begin{align*}
        &\xi_i \overset{i.i.d.}{\sim} \mathcal{U}[0,1], &\forall 1 \le i \le m, \\
        &\eta_j \overset{i.i.d.}{\sim} \mathcal{U}[0,1], &\forall 1 \le j \le n, \\
        &W_{ij} \overset{i.i.d.}{\sim} \mathcal{L}(\alpha), &\forall 1 \le i \le m, 1 \le j \le n, \\
        &Y_{ij} \mid \xi_i, \eta_j, W_{ij} \sim \mathcal{P}(\lambda f(\xi_i) g(\eta_j) W_{ij}), &\forall 1 \le i \le m, 1 \le j \le n,
    \end{align*}
    where $(\mathcal{L}(\alpha))_{\alpha \ge 0}$ is a family of probability distributions on the nonnegative real numbers with mean 1 and parameterized by their variances, i.e. for all $1 \le i \le m, 1 \le j \le n$, $\mathbb{E}[W_{ij}] = 1$ and $\mathbb{V}[W_{ij}] = \alpha$. Like the Poisson-BEDD model, the Overdispersed-Poisson-BEDD model is row-column exchangeable and admits a similar AHK representation, the only difference being the addition of the latent variables $(W_{ij})_{1 \le i \le m, 1 \le j \le n}$, each of which is measurable by the $\sigma$-field generated by the corresponding AHK variable $\zeta_{ij}$. We see that
    \begin{equation*}
        \mathbb{V}[Y_{ij}] = \mathbb{E}[Y_{ij}^2] - \mathbb{E}[Y_{ij}]^2 = \mathbb{E}[\lambda^2 f(\xi_i)^2 g(\eta_j)^2 W_{ij}^2] + \lambda - \lambda^2  = \lambda^2 (F_2 G_2 (\alpha + 1) - 1) + \lambda,
    \end{equation*} 
    which is larger than the variance under the Poisson-BEDD model derived in~\eqref{eq:var_bedd} by an additive term $\lambda^2 F_2 G_2 \alpha$. The Overdispersed-Poisson-BEDD model is simplified into the Poisson-BEDD model~\eqref{eq:poisson_bedd} when the dispersion parameter $\alpha = 0$. Therefore, we can use this network model to test the hypothesis $\mathcal{H}_0 : \alpha = 0$, which allows us to assess whether the Poisson-BEDD model is appropriate to represent the data concerning its dispersion.

    Let $h_4$ and $h_5$ be kernels functions defined by
    \begin{equation*}
        h_4(Y_{\{1,2\}, \{1,2\}}) = \frac{1}{4}\left( Y_{11} (Y_{11} - 1) Y_{22} + Y_{12} (Y_{12} - 1) Y_{21} + Y_{21} (Y_{21} - 1) Y_{12} + Y_{22} (Y_{22} - 1) Y_{11}\right)
    \end{equation*}
    and
    \begin{equation*}
        h_5(Y_{\{1,2\}, \{1,2\}}) = \frac{1}{4}\left( Y_{11} Y_{12} Y_{22} + Y_{12} Y_{22} Y_{21} + Y_{22} Y_{21} Y_{11} + Y_{21} Y_{11} Y_{12} \right).
    \end{equation*}
    We have $\mathbb{E}[h_4(Y_{\{1,2\}, \{1,2\}})] = \lambda^3 F_2 G_2 (1 + \alpha)$ and $\mathbb{E}[h_5(Y_{\{1,2\}, \{1,2\}})] = \lambda^3 F_2 G_2$, therefore, the kernel function $h_6 := h_4 - h_5$ has expectation 
    \begin{equation*}
        \mathbb{E}[h_6(Y_{\{1,2\}, \{1,2\}})] = \lambda^3 F_2 G_2 \alpha,
    \end{equation*}
    which is zero under $\mathcal{H}_0$ only. Thus, we can use the associated $U$-statistic $U^{h_6}_N$ to test $\mathcal{H}_0$.

    The first-order projection terms are given by Lemmas~\ref{lem:degenerate:condexp_5} and~\ref{lem:degenerate:condexp_6}. Under $\mathcal{H}_0$, we notice that
    \begin{itemize}
        \item $\mathbb{E}[h_6(Y_{\{1,2\}, \{1,2\}}) \mid \xi_1] = 0$,
        \item $\mathbb{E}[h_6(Y_{\{1,2\}, \{1,2\}}) \mid \eta_1] = 0$.
    \end{itemize}
    Therefore, we can deduce that $U^{h_6}_N$ is a degenerate $U$-statistic. Like in the previous example, we determine its order of degeneracy by finding the principal support graphs. This is done by searching for the graphs of $\cup_{r+c=d}\Gamma_{r,c}$ such that $\mathbb{E}[h_6(Y_{\{1,2\},\{1,2\}}) \mid H(G)] \neq 0$ under $\mathcal{H}_0$, for increasing values of $d$ until some are found.

    The projection terms corresponding to $d=2$ are given by Lemmas~\ref{lem:degenerate:condexp_7} to~\ref{lem:degenerate:condexp_8}. Here, we see that under $\mathcal{H}_0$,
    \begin{itemize}
        \item $\mathbb{E}[h_6(Y_{\{1,2\}, \{1,2\}}) \mid \xi_1, \xi_2] = 0$,
        \item $\mathbb{E}[h_6(Y_{\{1,2\}, \{1,2\}}) \mid \eta_1, \eta_2] = 0$,
        \item $\mathbb{E}[h_6(Y_{\{1,2\}, \{1,2\}}) \mid \xi_1, \eta_1] = 0$,
        \item $\mathbb{E}[h_6(Y_{\{1,2\}, \{1,2\}}) \mid \xi_1, \eta_1, \zeta_{11}] \neq 0$.
    \end{itemize}
    This means that we have found one (and only one) graph $G$ of $\cup_{r+c=2}\Gamma_{r,c}$ such that $\mathbb{E}[h_6(Y_{\{1,2\},\{1,2\}}) \mid H(G)] \neq 0$. This graph is the one such that $H(G) = \sigma(\xi_1, \eta_1, \zeta_{12})$. Thus, the order of degeneracy of $U^{h_6}_N$ is $1$ and its principal support graphs are the graphs $(K_{\{i\}, \{j\}})_{1 \le i \le n, 1 \le j \le m}$.
    \label{ex:3}
\end{customexa}

\subsection{Convergence of degenerate $U$-statistics}
\label{sec:degenerate:limit_distribution}

Now, having defined the principal part of a $U$-statistic, we see how it is directly related to the limit distribution of the $U$-statistic. From Proposition~\ref{prop:degenerate:variance_ustat_ep}, we have
\begin{equation*}
\begin{split}
    \mathbb{V}[U_{N}] &= \sum_{(0,0) < (r,c) \le (p,q)} \frac{(m_N-r)!}{m_N!} \frac{(n_N-c)!}{n_N!} V^{(r,c)} 
\end{split}
\end{equation*}
We see that $\mathbb{V}[U_N]$ is the sum of the $p \times q$ terms of the form $\frac{(m_N-r)!}{m_N!} \frac{(n_N-c)!}{n_N!} V^{(r,c)}$. Each term behaves like $\frac{(m_N-r)!}{m_N!} \frac{(n_N-c)!}{n_N!} V^{(r,c)} \asymp N^{-r-c}$. If for some $(r,c)$, $\sum_{\substack{G \in K_{p,q} \\ (v_1(G),v_2(G)) = (r,c)}} p^G = 0$, then $V^{(r,c)} = 0$. Therefore,
\begin{equation*}
\begin{split}
    \mathbb{V}[U_{N}] &= N^{-d} \sum_{\substack{(0,0) < (r,c) \le (p,q) \\ r+c=d}}  \rho^{-r} (1- \rho)^{-c} V^{(r,c)}  + o(N^{-d}) \\
    &= N^{-d} \sum_{r=0}^d  \rho^{-r} (1- \rho)^{-d-r} V^{(r,d-r)}  + o(N^{-d}) \\
\end{split}
\end{equation*}
This is a hint that the right normalization for the convergence in distribution of $U_N$ is given by its principal degrees. The following theorem, proven in Appendix~\ref{app:degenerate:proof_principal_part}, confirms it.

\begin{theorem}
    There is a random variable $W$ such that $N^{d/2} (U_N - p^\emptyset) \xrightarrow[]{\mathcal{D}} W$ if and only if 
    \begin{equation*}
        N^{d/2} \sum_{\substack{(0,0) < (r,c) \le (p,q) \\ r+c=d}} P^{r,c}_{N} \xrightarrow[]{\mathcal{D}} W.
    \end{equation*}
    \label{th:degenerate:principal_part}
\end{theorem}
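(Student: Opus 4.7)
The plan is to isolate the principal part of the Hoeffding-type decomposition from the higher-order remainder and show that, after multiplying by $N^{d/2}$, the remainder is negligible in probability. Once this is done, Slutsky's theorem yields the equivalence in both directions.

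More concretely, the decomposition \eqref{eq:decomposition_ustat} together with the fact that $P^{0,0}_N = p^\emptyset$ gives the identity
\begin{equation*}
    U_N - p^\emptyset = \sum_{\substack{(0,0) < (r,c) \le (p,q) \\ r+c = d}} P^{r,c}_N \;+\; R_N, \qquad R_N := \sum_{\substack{(0,0) < (r,c) \le (p,q) \\ r+c > d}} P^{r,c}_N.
\end{equation*}
So the first step is to control $\mathbb{V}[R_N]$. Because the components indexed by distinct pairs $(r,c)$ live in mutually orthogonal subspaces (this is the content of Theorem~\ref{th:degenerate:l2_ortho}, aggregated at the level of $(r,c)$ via Proposition~\ref{prop:degenerate:variance_ustat_ep}), the variance of the sum equals the sum of the variances:
\begin{equation*}
    \mathbb{V}[R_N] \;=\; \sum_{\substack{(0,0) < (r,c) \le (p,q) \\ r+c > d}} \frac{(m_N-r)!}{m_N!}\,\frac{(n_N-c)!}{n_N!}\, V^{(r,c)}.
\end{equation*}
Under the asymptotic regime $m_N/N \to \rho \in (0,1)$, each prefactor satisfies $\frac{(m_N-r)!}{m_N!}\frac{(n_N-c)!}{n_N!} = O(N^{-(r+c)})$, and the $V^{(r,c)}$ are finite constants (they depend only on the kernel $h$, via the $\mathbb{E}[(p^G)^2]$, not on $N$). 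Summing over $r+c \ge d+1$ therefore yields $\mathbb{V}[R_N] = O(N^{-(d+1)})$.

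The second step is the negligibility argument. From the variance bound, $\mathbb{V}[N^{d/2} R_N] = N^d \cdot O(N^{-(d+1)}) = O(N^{-1}) \to 0$. Since $\mathbb{E}[P^{r,c}_N] = 0$ for $(r,c) > (0,0)$ (again by Proposition~\ref{prop:degenerate:l2_direct_sum}, each $p^G$ with $G \neq \emptyset$ has zero mean), we get $\mathbb{E}[N^{d/2} R_N] = 0$, and Chebyshev's inequality yields $N^{d/2} R_N \xrightarrow{\mathbb{P}} 0$.

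The final step is to invoke Slutsky's theorem. Writing
\begin{equation*}
    N^{d/2}(U_N - p^\emptyset) \;=\; N^{d/2} \sum_{\substack{(0,0) < (r,c) \le (p,q) \\ r+c = d}} P^{r,c}_N \;+\; N^{d/2} R_N,
\end{equation*}
and using that the second summand vanishes in probability, the convergence in distribution of either side transfers to the other with the same limit $W$. I do not foresee a hard step here: the only subtlety is making sure the orthogonality of the $(r,c)$-blocks is invoked cleanly in order to sum the variance contributions without cross terms, and this is already packaged in Proposition~\ref{prop:degenerate:variance_ustat_ep}. No additional tightness argument is needed because Slutsky with an in-probability-vanishing perturbation handles both implications symmetrically.
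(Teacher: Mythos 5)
Your proposal is correct and follows essentially the same route as the paper's own proof: isolate the remainder $R_N=\sum_{r+c>d}P^{r,c}_N$, bound its variance via Proposition~\ref{prop:degenerate:variance_ustat_ep} and the $O(N^{-(r+c)})$ decay of the combinatorial prefactors, and conclude that $N^{d/2}R_N=o_P(1)$. The only cosmetic point is that your displayed identity also uses $P^{r,c}_N=0$ for $0<r+c<d$ (which is exactly the definition of the order of degeneracy $d$, as the paper states explicitly at the start of its proof), not merely $P^{0,0}_N=p^\emptyset$.
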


This theorem says that the limit distribution of $U_N - p^\emptyset$ renormalized by $N^{d/2}$ is the same as that of its principal part $\sum_{\substack{(0,0) \le (r,c) \le (p,q) \\ r+c = d}} P^{r,c}_{N}$, renormalized by the same quantity. Therefore, the principal support graphs of $U_N$ characterize the limit distribution of $U_N$. More specifically, the limit distribution depends on the form of the principal support graphs of $U_N$. 

\subsection{Asymptotic Gaussian distribution}
\label{sec:degenerate:gaussian_case}

Now, we identify a sufficient condition for the principal support graphs to have a Gaussian limit distribution for $N^{d/2}(U_N - p^\emptyset)$, using the properties of the principal part of $U_N$. 

\begin{theorem}
    If all principal support graphs of $U_N$ are connected, then
    \begin{equation*}
        N^{d/2} (U_N - p^\emptyset) \xrightarrow[N \rightarrow \infty]{\mathcal{D}} \mathcal{N}(0, \sigma^2),
    \end{equation*}
    where 
    \begin{equation*}
        \sigma^2 = \sum_{\substack{(0,0) < (r,c) \le (p,q) \\ r+c=d}} \rho^{-r} (1-\rho)^{-c} V^{(r, c)}.
    \end{equation*}
    \label{th:degenerate:gaussian_theorem}
\end{theorem}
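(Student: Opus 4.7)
The plan is to reduce the statement to a limit theorem for the principal part and then establish asymptotic normality by the method of moments, exploiting connectedness to control the combinatorics. First I would invoke Theorem~\ref{th:degenerate:principal_part} to replace $N^{d/2}(U_N-p^\emptyset)$ with its principal part
\[
S_N := N^{d/2} \sum_{\substack{(r,c):\, r+c=d}} P^{r,c}_N
= N^{d/2} \sum_{\substack{(r,c):\, r+c=d}} \sum_{G\in\Gamma_{r,c}}
\frac{1}{(p-r)!(q-c)!\,|\Aut(G)|}\,\widetilde P^{G}_N,
\]
so that only principal support graphs $G$ (connected by hypothesis) give nonzero contributions. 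Each $\widetilde p^G_{\ib,\jb}\in L_2^*(G')$ for the unique $G'\subseteq K_{\ib,\jb}$ with $G'\sim G$, hence each $\widetilde P^G_N$ is centered.

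Next, by the Cramér--Wold device it suffices to show that every real linear combination
$T_N := \sum_{G}\alpha_G\, N^{d/2}\widetilde P^G_N$, indexed by the finite family of principal support graphs, converges to a centered normal distribution. I would write $T_N$ as a single normalized sum
\[
T_N = c_{m_N,n_N}\sum_{\substack{\ib\in\mathcal P_p([m_N])\\ \jb\in\mathcal P_q([n_N])}} X_{\ib,\jb},
\qquad
X_{\ib,\jb} := \sum_G \alpha_G\,\widetilde p^G_{\ib,\jb},
\]
where $c_{m_N,n_N}=N^{d/2}\binom{m_N}{p}^{-1}\binom{n_N}{q}^{-1}$. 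Proposition~\ref{prop:degenerate:variance_ustat_ep} combined with Lemma~\ref{lem:degenerate:var_pg} controls the variance and, upon taking $m_N/N\to\rho$, gives $\mathbb V[T_N]\to \sum_G \alpha_G^2\,\sigma_G^2 + \text{cross terms}$. The cross terms are handled by the orthogonality $L_2^*(G_1)\perp L_2^*(G_2)$ for non-isomorphic $G_1,G_2$ from Theorem~\ref{th:degenerate:l2_ortho}, which forces cross moments of distinct $\widetilde P^G_N,\widetilde P^{G'}_N$ to be of smaller order, so the limiting variance matches $\sum_{r+c=d}\rho^{-r}(1-\rho)^{-c}V^{(r,c)}$ after summing $\alpha_G$-weighted contributions.

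To identify the Gaussian limit I would use the method of moments: show that for every integer $k\ge 1$,
\[
\E[T_N^{2k}] \longrightarrow (2k-1)!!\,\bigl(\lim_N\V[T_N]\bigr)^k,
\qquad
\E[T_N^{2k+1}] \longrightarrow 0.
\]
Expanding $\E[T_N^{k}]$ yields a sum over $k$-tuples $\bigl((\ib_1,\jb_1),\ldots,(\ib_k,\jb_k)\bigr)$ of $\E\!\left[\prod_{s=1}^k X_{\ib_s,\jb_s}\right]$. By orthogonality, each such expectation vanishes unless the family of copies of the principal support graphs hidden inside each $K_{\ib_s,\jb_s}$ can be matched into overlapping clusters such that every cluster's ``residual'' graph survives projection. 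Here connectedness of each principal support graph $G$ enters decisively: if a cluster contains $\ell$ copies of a connected graph on $d$ nodes, the number of index tuples producing that cluster is $O(N^{\,\ell\cdot d - d(\ell-1)\cdot \mathbf 1_{\text{overlap}}})$, and after normalizing by $c_{m_N,n_N}^{k}\asymp N^{-k(p+q)+kd/2}$ only the clusters that are exact pairings (each copy matched with exactly one other copy) contribute in the limit, producing the Wick/Isserlis formula for a centered Gaussian.

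The main obstacle is the combinatorial bookkeeping in this last step: one must show that any cluster of three or more overlapping copies of a connected principal support graph has strictly fewer free indices than a pure pairing, so that its contribution is $o(1)$ after normalization. I would formalize this by adapting Lemma~\ref{lem:degenerate:count_equal_graphs} to count tuples of graphs sharing a prescribed number of row/column vertices, and observing that merging $\ell\ge 3$ connected copies on $d$ vertices along any nonempty overlap strictly increases the normalization deficit by at least one vertex. Once the dominant terms are identified as pairings and the variance from the second paragraph gives the pair contribution, summing over pairings yields exactly $(2k-1)!!\,\sigma_{\text{lim}}^2{}^k$, which completes the proof of the Gaussian limit with the claimed variance.
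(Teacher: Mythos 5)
Your proposal is correct and follows essentially the same route as the paper: reduce to the principal part via Theorem~\ref{th:degenerate:principal_part}, then apply the method of moments to the $\widetilde P^G_N$, showing via the counting in Lemma~\ref{lem:degenerate:count_equal_graphs} that exact pairings of connected copies dominate while any configuration with a vertex shared by three or more copies loses at least one free index and is $o(1)$ after normalization (the paper's Lemmas~\ref{lem:degenerate:graph_pairs} and~\ref{lem:degenerate:convergence_gaussian_projection}). The only cosmetic difference is that you package the moment computation through the Cram\'er--Wold device applied to a linear combination, whereas the paper computes the joint mixed moments of the vector $(m_N^{r_k/2} n_N^{c_k/2}\widetilde P^{G_k}_N)_k$ directly; also note that the cross-covariances between distinct $\widetilde P^G_N$ vanish exactly by orthogonality, not merely to leading order.
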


The proof of this theorem uses the fact that from Theorem~\ref{th:degenerate:principal_part}, $N^{d/2} (U_N - p^\emptyset)$ has the same limit as $$N^{d/2} \sum_{\substack{(0,0) < (r,c) \le (p,q) \\ r+c=d}} P^{r,c}_{N},$$ 
where
\begin{equation*}
     P^{r,c}_{N} = \sum_{G \in \Gamma_{r,c}} \frac{1}{(p-r)! (q-c)! \lvert \Aut(G) \rvert} \widetilde{P}^{G}_{N}.
\end{equation*}
Two lemmas are further needed. The convergence of the terms $N^{d/2} \widetilde{P}^{G}_{N}$ is proved by
the methods of moments (Lem.~\ref{lem:degenerate:convergence_gaussian_projection}). The calculation of the moments involves sums of terms of the form $\mathbb{E}[\prod_{k = 1}^K p^{G_k}]$, the values of which depend on the configuration of the sequence of graphs $G_1, ..., G_k$ (Lem.~\ref{lem:degenerate:graph_pairs}). Therefore, the moments are obtained by counting the frequency of the relevant configurations in these sums. 

Below, Lemmas~\ref{lem:degenerate:graph_pairs} and~\ref{lem:degenerate:convergence_gaussian_projection} are given before the full proof of Theorem~\ref{th:degenerate:gaussian_theorem}. The proofs for these lemmas can be found in Appendix~\ref{app:degenerate:proof_gaussian_case}.

\begin{lemma}
    Let $G_1, ..., G_K$ be subgraphs of $K_{m_N, n_N}$. If $\mathbb{E}[\prod_{k = 1}^K p^{G_k}] \neq 0$, then for all $G_k$, $1 \le k \le K$, each vertex of $V_1(G_k)$ or $V_2(G_k)$ or edge of $E(G_k)$ must also appear in another $G_\ell$, $\ell \neq k$.
    
    Furthermore, if $G_1, ..., G_K$ are connected and non-empty, then one and only one of the following propositions is true:
    \begin{itemize}
        \item $K$ is even and $G_1, ..., G_K$ coincide in $K/2$ pairs (i.e. the indices $[ K ]$ can be grouped into $K/2$ pairs and within each pair, the corresponding bipartite graphs are equal),
        \item some vertex belongs to at least three of these graphs.
    \end{itemize} 
    \label{lem:degenerate:graph_pairs}
\end{lemma}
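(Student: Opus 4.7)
My plan is to use the defining property of the projections established in Proposition~\ref{prop:degenerate:l2_direct_sum}: $p^{G_k} \in L_2^*(G_k)$, i.e. $\mathbb{E}[p^{G_k} \mid H(F)] = 0$ for every strict subgraph $F \subset G_k$. Both parts of the lemma will follow from this orthogonality by a contraposition/propagation argument.

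For part one, I argue by contraposition: suppose some vertex $v \in V_1(G_1) \cup V_2(G_1)$ or some edge $e \in E(G_1)$ appears in no other $G_\ell$. Let $F_1 \subset G_1$ be obtained by deleting $v$ together with its incident edges (resp.\ deleting $e$). Uniqueness ensures that the AHK variables in $H(G_1) \setminus H(F_1)$, namely $\xi_v$ and the $\zeta_{vj}$ for $(v,j) \in E(G_1)$, or simply $\zeta_e$, are disjoint from every $H(G_\ell)$ for $\ell \ge 2$, and hence independent of $A := H(F_1) \cup \bigcup_{\ell \ge 2} H(G_\ell)$. The tower property then yields
\begin{equation*}
\mathbb{E}\Bigl[\prod_{k=1}^K p^{G_k}\Bigr] = \mathbb{E}\Bigl[\mathbb{E}[p^{G_1} \mid A]\prod_{k=2}^K p^{G_k}\Bigr] = \mathbb{E}\Bigl[\mathbb{E}[p^{G_1} \mid H(F_1)]\prod_{k=2}^K p^{G_k}\Bigr] = 0,
\end{equation*}
where the middle equality uses the independence just noted and the last uses $F_1 \subset G_1$.

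For part two, I split on whether some vertex belongs to at least three of the $G_k$. If not, then by part one every vertex of every $G_k$ lies in exactly one other graph. Fix $k$, a vertex $v \in V(G_k)$, and let $G_\ell$ be its unique partner. For any edge $(v,w) \in E(G_k)$, part one forces the edge into some $G_j$ with $j \neq k$; since $v$ already belongs to only $G_k$ and $G_\ell$, we must have $j = \ell$, so $w \in V(G_\ell)$ and by the same reasoning $w$'s partner is also $G_\ell$. Iterating along paths and invoking the connectedness of $G_k$, every vertex and every edge of $G_k$ lies in the same $G_\ell$, so $G_k \subseteq G_\ell$. Applying the same argument to $G_\ell$ gives $G_\ell \subseteq G_m$ for some partner $G_m$; any vertex of $V(G_k) \subseteq V(G_\ell) \subseteq V(G_m)$ would otherwise lie in the three distinct indices $k, \ell, m$, so the two-graphs-per-vertex constraint forces $m = k$ and hence $G_k = G_\ell$.

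The main obstacle is to upgrade this local pairing into a global partition of $[K]$. I handle it by induction: once a pair $\{k,\ell\}$ with $G_k = G_\ell$ is identified, any remaining $G_j$ must be vertex-disjoint from $G_k$, since a shared vertex would sit in the three indices $j, k, \ell$. The part-one conclusion therefore restricts to the sub-collection indexed by $[K] \setminus \{k,\ell\}$, and the argument iterates to produce $K/2$ disjoint pairs of equal graphs; in particular $K$ is even. Mutual exclusivity is immediate, since in such a pairing each vertex lies in exactly two graphs.
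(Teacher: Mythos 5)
Your proof is correct and follows essentially the same route as the paper: part one reduces the conditional expectation of one factor to conditioning on a strict subgraph (via independence of the AHK variables) and invokes $p^{G_k}\in L_2^*(G_k)$, and part two combines the resulting "each vertex and edge lies in exactly two graphs" count with connectedness to force the graphs to coincide in disjoint pairs. Your propagation-along-paths argument in part two is in fact spelled out more carefully than the paper's one-line justification ("every connected component must belong to exactly two of them"), but it is the same underlying idea.
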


\begin{lemma}
    Let $(G_k)_{1 \le k \le K}$ be a sequence of distinct connected graphs of $\Gamma^-_{p,q}$, with $v_1(G_k) = r_k$ and $v_2(G_k) = c_k$ for $1 \le k \le K$. We have that
\begin{equation}
    (m_N^{r_k/2} n_N^{c_k/2} \widetilde{P}^{G_k}_{N})_{1 \le k \le K} \xrightarrow[]{\mathcal{D}} (W_k)_{1 \le k \le K},
\end{equation}     
where $W_k$ are independent variables with respective distribution $\mathcal{N}(0, p!^2 q!^2 \lvert \Aut(G_k) \rvert \mathbb{E}[(p^{G_k})^2])$.
\label{lem:degenerate:convergence_gaussian_projection}
\end{lemma}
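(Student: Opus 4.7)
The plan is to apply the method of moments. Since a vector of independent centered Gaussians is determined by its joint moments, it suffices to show that for every tuple of nonnegative integers $(\ell_1, \ldots, \ell_K)$,
\[
\mathbb{E}\!\left[\prod_{k=1}^{K}\bigl(m_N^{r_k/2} n_N^{c_k/2} \widetilde{P}^{G_k}_N\bigr)^{\ell_k}\right]
\longrightarrow
\prod_{k=1}^{K} \mathbb{E}[W_k^{\ell_k}],
\]
where by Wick's formula the right-hand side vanishes unless every $\ell_k$ is even, in which case it equals $\prod_k (\ell_k - 1)!!\, \sigma_k^{\ell_k}$ with $\sigma_k^2 := p!^2 q!^2 \lvert\Aut(G_k)\rvert\, \mathbb{E}[(p^{G_k})^2]$.

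I would first expand each factor as
\[
\widetilde{P}^{G_k}_N = \binom{m_N}{p}^{-1} \binom{n_N}{q}^{-1} \sum_{\ib,\jb} \sum_{\substack{F \subseteq K_{\ib,\jb}\\ F \sim G_k}} p^F,
\]
so that the joint moment becomes a multi-sum of expectations $\mathbb{E}[\prod_s p^{F_s}]$, where $s$ ranges over $\{1,\ldots,\sum_k \ell_k\}$ and each $F_s$ is isomorphic to some $G_{k(s)}$. Lemma~\ref{lem:degenerate:graph_pairs} is the structural engine: such an expectation vanishes unless every vertex and edge of every $F_s$ is shared with some other $F_{s'}$, and because each $G_k$ is connected, only two configurations survive, either the $F_s$ coincide in matched pairs, or some vertex lies in at least three of them.

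The leading-order contribution comes exclusively from the pairing configurations. If $(F_s, F_{s'})$ form a pair, then orthogonality of the subspaces $L_2^*$ (Theorem~\ref{th:degenerate:l2_ortho}) combined with the pairing constraint forces $F_s = F_{s'}$ as labeled subgraphs of $K_{m_N, n_N}$, hence $F_s \sim F_{s'}$, hence $k(s) = k(s')$. This is precisely what yields asymptotic independence across different indices $k$ and, simultaneously, rules out mixed pairings. For each $k$ with even $\ell_k$, the $\ell_k$ indices are partitioned into $\ell_k/2$ pairs in $(\ell_k - 1)!!$ ways. For each pair, Lemma~\ref{lem:degenerate:count_equal_graphs} counts the number of coinciding labeled configurations as $\tfrac{m_N!(m_N - r_k)!}{(m_N - p)!^2}\tfrac{n_N!(n_N - c_k)!}{(n_N - q)!^2}\lvert\Aut(G_k)\rvert$, of leading order $m_N^{2p - r_k} n_N^{2q - c_k}$. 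After dividing by the two prefactors $\binom{m_N}{p}^{-1}\binom{n_N}{q}^{-1} \sim (p!q!)^{-1} m_N^{-p} n_N^{-q}$ per factor and multiplying by the renormalization $m_N^{r_k/2} n_N^{c_k/2}$ per factor, each pair contributes exactly $\sigma_k^2$. Assembling the pairings reproduces $\prod_k (\ell_k - 1)!!\, \sigma_k^{\ell_k}$, and odd $\ell_k$ cases yield zero, matching the Gaussian moments.

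The main obstacle is controlling the second alternative in Lemma~\ref{lem:degenerate:graph_pairs}, the triple-vertex configurations where some node lies in at least three of the $F_s$'s. The argument I would use is a degree-of-freedom count: forcing a single row (resp.\ column) vertex to be shared among three indices $s$ rather than only two strictly reduces the number of independent row (resp.\ column) indices appearing in the sum, so the combinatorial count carries at least one fewer factor of $m_N$ or $n_N$ than the pure pairing count of the same total weight. Balancing this deficit against the $m_N^{r_k/2}n_N^{c_k/2}$ renormalization per factor forces these contributions to vanish as $N \to \infty$. Making this counting precise through a case analysis of how shared vertices and edges reduce the free index set is the technical heart of the proof; a union-of-graphs argument, bounding $\bigl|V_1(\bigcup_s F_s)\bigr|$ and $\bigl|V_2(\bigcup_s F_s)\bigr|$ strictly below $\sum_s r_{k(s)}/2$ and $\sum_s c_{k(s)}/2$ respectively under the triple-vertex scenario, makes this rigorous.
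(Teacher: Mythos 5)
Your proposal is correct and follows essentially the same route as the paper's proof: method of moments, expansion into sums of $\mathbb{E}[\prod_s p^{F_s}]$, Lemma~\ref{lem:degenerate:graph_pairs} to restrict to pairings versus triple-shared vertices, Lemma~\ref{lem:degenerate:count_equal_graphs} to evaluate the pairing count (with non-isomorphy of the $G_k$ forcing pairings within the same $k$ and hence asymptotic independence), and a union-of-graphs vertex-counting bound to show the triple-vertex configurations contribute $o(1)$ after normalization. The paper carries out exactly the degree-of-freedom count you sketch, bounding $v_1(\bigcup_s F_s)\le \sum_s r_{k(s)}/2-1$ when a row node is shared three times.
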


\begin{proof}[Proof of Theorem~\ref{th:degenerate:gaussian_theorem}]
    
Theorem~\ref{th:degenerate:principal_part} states that $N^{d/2} (U_N - p^\emptyset)$ has the same limit as $$N^{d/2} \sum_{\substack{(0,0) < (r,c) \le (p,q) \\ r+c=d}} P^{r,c}_{N}.$$
    
For all $(0,0) < (r,c) \le (p,q)$,
\begin{equation*}
     P^{r,c}_{N} = \sum_{G \in \Gamma_{r,c}} \frac{1}{(p-r)! (q-c)! \lvert \Aut(G) \rvert} \widetilde{P}^{G}_{N}.
\end{equation*}

So 
\begin{equation*}
     N^{d/2} \sum_{\substack{(0,0) < (r,c) \le (p,q) \\ r+c=d}} P^{r,c}_{N} = \sum_{\substack{(0,0) < (r,c) \le (p,q) \\ r+c=d}} N^{d/2} m_N^{-r/2} n_N^{-c/2} \sum_{G \in \Gamma_{r,c}}  \frac{m_N^{r/2} n_N^{c/2} \widetilde{P}^{G}_{N}}{(p-r)! (q-c)! \lvert \Aut(G) \rvert}.
\end{equation*}

By construction, $N^{d/2} m_N^{-r/2} n_N^{-c/2} \xrightarrow[N \rightarrow \infty]{} \rho^{-r/2} (1-\rho)^{-c/2}$. Therefore, by Lemma~\ref{lem:degenerate:convergence_gaussian_projection}, $$N^{d/2} \sum_{\substack{(0,0) < (r,c) \le (p,q) \\ r+c=d}} P^{r,c}_{N}$$ converges in distribution to 
\begin{equation*}
    Z = \sum_{\substack{(0,0) < (r,c) \le (p,q) \\ r+c=d}} \rho^{-r/2} (1-\rho)^{-c/2} \sum_{G \in \Gamma_{r,c}} W_G,
\end{equation*}
where for all $(r,c)$, $G \in \Gamma_{r,c}$, $W_G$ are independent Gaussian variables with mean $0$ and variance 
\begin{equation*}
    \frac{p!^2 q!^2}{(p-r)!^2 (q-c)!^2 \lvert \Aut(G) \rvert} \mathbb{E}[(p^{G})^2].
\end{equation*}

Finally, it follows that $Z$ is a Gaussian variable with mean $0$ and variance 
\begin{equation*}
    \sum_{\substack{(0,0) < (r,c) \le (p,q) \\ r+c=d}} \rho^{-r} (1-\rho)^{-c} V^{(r, c)},
\end{equation*} 
where 
\begin{equation*}
    V^{(r, c)} = \frac{p!^2 q!^2}{(p-r)!^2 (q-c)!^2 } \sum_{G \in \Gamma_{r, c} } \lvert \Aut(G) \rvert^{-1} \mathbb{E}[(p^{G})^2].
\end{equation*}

\end{proof}

\begin{remark}
    If $Y$ and $h$ are such that the principal support graphs of $U_N$ include $K_{1,0}$ or $K_{0,1}$, then the principal degree of $U_N$ is $1$, and the limit distribution is Gaussian. In this case, Theorem~\ref{th:degenerate:gaussian_theorem} directly yields the non-degenerate Central Limit Theorem for $U$-statistics on RCE matrices, as established by~\cite{leminh2023ustatistics} and~\cite{leminh2025hoeffding}:
    \begin{equation*}
        \sqrt{N} (U_N - p^\emptyset) \xrightarrow[N \rightarrow \infty]{\mathcal{D}} \mathcal{N}(0, \sigma^2),
    \end{equation*}
    where $\sigma^2 = \rho^{-1} V^{(1,0)} + (1 - \rho)^{-1} V^{(0,1)}$, with Proposition~\ref{prop:degenerate:variance_ustat_ep} giving $V^{(1,0)} = p^2 \mathbb{V}[\mathbb{E}[h(Y_{[ p ], [ q ]}) \mid \xi_1]]$ and $V^{(0,1)} = q^2 \mathbb{V}[\mathbb{E}[h(Y_{[ p ], [ q ]}) \mid \eta_1]]$. \\
    This also characterizes the degeneracy of $U_N$. $U_N$ is degenerate if and only if $V = 0$, which means both $\mathbb{E}[h(Y_{[ p ], [ q ]}) \mid \xi_1] = 0$ and $\mathbb{E}[h(Y_{[ p ], [ q ]}) \mid \eta_1] = 0$. This also only happens when neither $K_{1,0}$ nor $K_{0,1}$ are principal support graphs, i.e. when the order of degeneracy of $U_N$ is larger than $1$.
    \\
    We deduce that there is no hope to obtain a faster rate of convergence than $\sqrt{N}$ in non-degenerate cases and that it is always greater in degenerate cases. This is in accordance with the discussion of~\cite{leminh2023ustatistics}, but it shows how the principal support graphs and the order of degeneracy of $U_N$ characterize the degeneracy of $U_N$. 
    \\
    This characterization also shows that the coarser decomposition proposed by \cite{leminh2023ustatistics} is inadequate for identifying the precise distribution of degenerate $U$-statistics. The terms in the decomposition of \cite{leminh2023ustatistics} are indexed by pairs $(r,c)$, where $r \in [ p ]$ and $c \in [ q ]$. Each term $(r,c)$ of that decomposition aggregates the contributions of the finer decomposition presented in this paper, specifically summing over all bipartite graphs with $r$ row nodes and $c$ column nodes. However, the topology of these bipartite graphs determines the contribution of each term to the asymptotic distribution. In the decomposition of \cite{leminh2023ustatistics}, terms associated with graphs of distinct topologies are indistinguishable. Because of that, a result such as Lemma~\ref{lem:degenerate:convergence_gaussian_projection} cannot be obtained for any term of that decomposition, except for those of the first-order (because the graphs with $(r,c) = (0,1)$ or $(1,0)$ are trivial), which only solves the non-degenerate case.
\end{remark}

\subsection{Applications}
\label{sec:degenerate:running_examples}

In this section, we develop further and conclude with the three running examples of Section~\ref{sec:degenerate:principal_part}, identifying the distributional limit for the various statistics of interest. In particular, statistical tests can be derived from Running examples~\ref{ex:2} and~\ref{ex:3}. We provide some simulation results, most of which are deferred to Appendix~\ref{app:simulations} for concision.

\paragraph{Running example~\ref{ex:1}: Same-row co-engagement ($K_{1,2}$).}
    In this running example, $Y$ is a random matrix such that $Y_{ij} \overset{i.i.d.}{\sim} \mathcal{N}(0,1)$. Consider $h_1$ the kernel function defined by $h_1(Y_{\{1\}, \{1,2\}}) = Y_{11} Y_{12}$ and $U^{h_1}_N$ the $U$-statistic associated to this kernel. In Section~\ref{sec:degenerate:principal_part}, we have seen that $U^{h_1}_N$ is degenerate of order $2$ and the family of principal support graphs of $U^{h_1}_N$ is $(K_{\ib, \jb})_{\ib \in \mathcal{P}_1([ m_N ]), \jb \in \mathcal{P}_2([ n_N ])}$, which are all connected. \\
    Therefore, Theorem~\ref{th:degenerate:gaussian_theorem} implies
    \begin{equation*}
        N^{3/2} U^{h_1}_N \xrightarrow[N \rightarrow \infty]{\mathcal{D}} \mathcal{N}(0, \sigma_1^2),
    \end{equation*}
    where $\sigma_1^2 = V^{(1,2)} = \frac{4}{\rho (1-\rho)^2} \lvert \Aut(K_{1,2}) \rvert^{-1} \mathbb{E}[(p^{K_{1,2}})^2] = \frac{4}{\rho (1-\rho)^2} \frac{1}{2} \mathbb{E}[Y_{11}^2 Y_{12}^2] = \frac{2}{\rho (1-\rho)^2}$. Figure~\ref{fig:degeneracy_basic} illustrates this convergence result.

    \begin{figure}[!tb]
\centering
\includegraphics[width=0.8\linewidth]{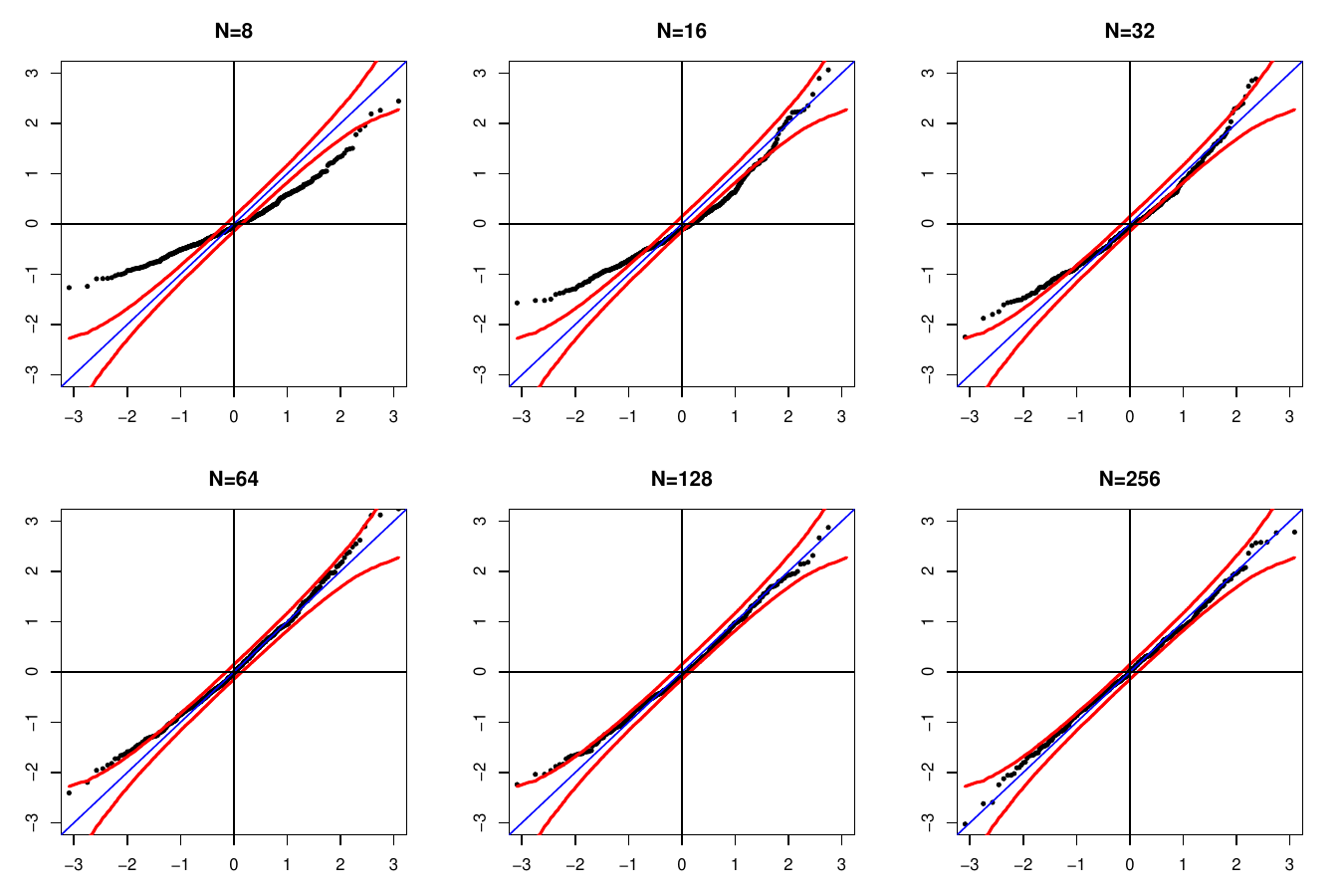}
\caption[Q-Q plots for Running example~\ref{ex:1}]{These Q-Q plots show convergence of $Z^A_N := N^{3/2} \sigma_1^{-1} U^{h_1}_N$ to a standard normal distribution ($500$ simulated networks for each network size). The red lines represent the $99\%$-confidence envelope. }
\label{fig:degeneracy_basic}
\end{figure}

\paragraph{Running example~\ref{ex:2}: Row heterogeneity.}
    We have previously seen that under the Poisson-BEDD model with $f \equiv 1$, the principal support graphs of $U^{h_3}_N = U^{h_1}_N - U^{h_2}_N$ are the graphs $(K_{\ib, \jb})_{\ib \in \mathcal{P}_1([ m_N ]), \jb \in \mathcal{P}_2([ n_N ])}$, which are connected graphs. Therefore, we can apply Theorem~\ref{th:degenerate:gaussian_theorem}, implying that 
    \begin{equation*}
    N^{3/2} U^{h_3}_N \xrightarrow[N \rightarrow \infty]{\mathcal{D}} \mathcal{N}(0, \sigma_3^2),
\end{equation*}
where $\sigma_3^2 =  V^{(1,2)} = \frac{16}{\rho (1-\rho)^2} \lvert \Aut(K_{1,2}) \rvert^{-1} \mathbb{E}[(p^{K_{1,2}})^2] = \frac{2 \lambda^2}{\rho (1-\rho)^2}$, applying Lemma~\ref{lem:degenerate:varexp} with $F_2 = F_3 = F_4 = 1$ under $\mathcal{H}_0 : f \equiv 1$. Thus, the statistic $Z^B_N := N^{3/2} \sigma_3^{-1} U^{h_3}_N$ has a known asymptotic distribution (standard Gaussian) and can be used to build a test for $\mathcal{H}_0: f \equiv 1$. Figure~\ref{fig:degenerate:power_tests} (left) shows the empirical power of such a test for different network sizes. Simulations illustrating the asymptotic behavior of the statistic can be found in Appendix~\ref{app:simulations}. 

\paragraph{Running example~\ref{ex:3}: Overdispersion beyond row-column effects.}
    Under the Overdispersed-Poisson-BEDD model with $\alpha = 0$, the principal support graphs of $U^{h_6}_N = U^{h_4}_N - U^{h_5}_N$ are the graphs $(K_{\{i\}, \{j\}})_{1 \le i \le n, 1 \le j \le m}$, which are connected graphs. According to Theorem~\ref{th:degenerate:gaussian_theorem}, we have 
    \begin{equation*}
    N U^{h_6}_N \xrightarrow[N \rightarrow \infty]{\mathcal{D}} \mathcal{N}(0, \sigma_6^2),
\end{equation*}
where $\sigma_6^2 = V^{(1,1)} = \frac{16}{\rho (1-\rho)} \lvert \Aut(K_{1,1}) \rvert^{-1} \mathbb{E}[(p^{K_{1,1}})^2] = \frac{\lambda^4}{\rho (1-\rho)} \left[ \lambda (F_3 - F_2^2)(G_3 - G_2^2) + 2 F_2 G_2 \right]$ (derivation in Lemma~\ref{lem:degenerate:var_ex3}). Thus, the statistic $Z^C_N := N \sigma_6^{-1} U^{h_6}_N$ has a known asymptotic distribution (standard Gaussian) and can be used to build a test for $\mathcal{H}_0 : \alpha = 0$. Figure~\ref{fig:degenerate:power_tests} (right) shows the empirical power of such a test for different network sizes. Simulations illustrating the asymptotic behavior of the statistic can be found in Appendix~\ref{app:simulations}. 

\begin{figure}[!t]
\centering
\includegraphics[width=0.48\linewidth]{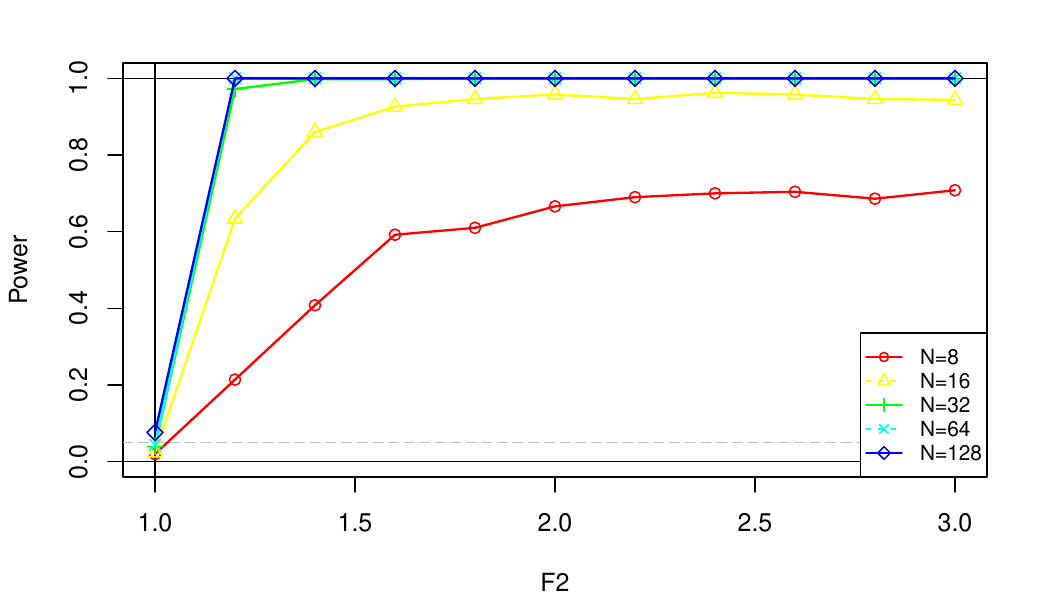}
\includegraphics[width=0.48\linewidth]{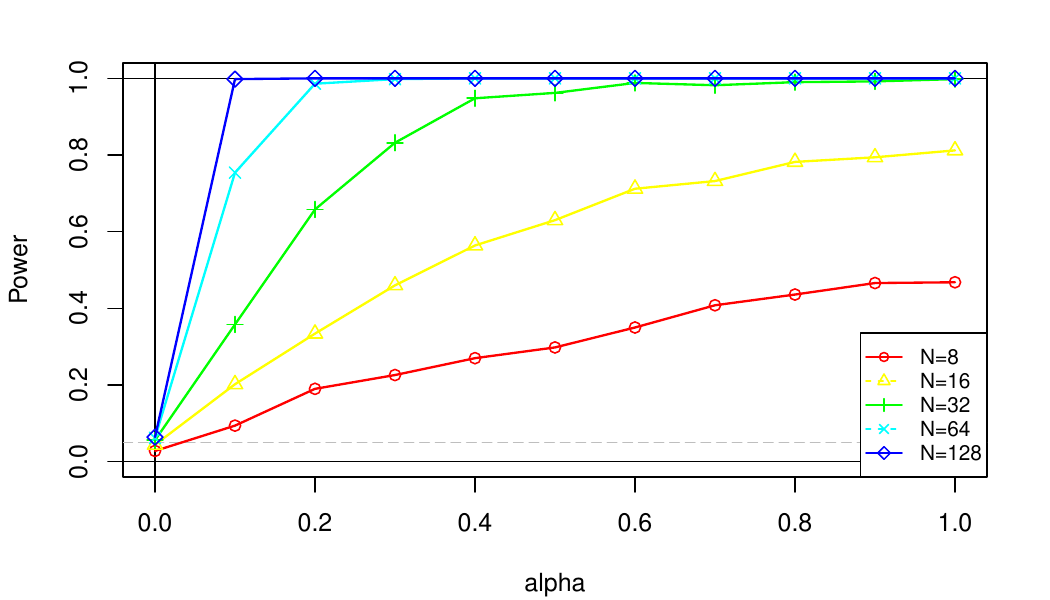}
\caption{Power of the tests on row heterogeneity (left) and overdispersion (right). For each network size and deviation from $\mathcal{H}_0$ considered ($F_2 > 1$ in the Poisson-BEDD for row heterogeneity, $\alpha > 0$ in the Overdispersed-Poisson-BEDD), we simulated $500$ networks and calculated the rejection rate of $\mathcal{H}_0$ using the statistic $Z^B_N$ (row heterogeneity) or $Z^C_N$ (overdispersion), and $95\%$ asymptotic confidence intervals. More details on the models used in these simulations are given in Appendix~\ref{app:simulations}.
}
\label{fig:degenerate:power_tests}
\end{figure}

\section{Extensions}

\subsection{Unbalanced asymptotic frameworks}
\label{sec:degenerate:other_asymptotic_frameworks}

In previous sections, we have assumed that $m_N + n_N = N$ and $m_N/N \rightarrow \rho \in ]0,1[$. It is in fact possible to extend all our results to any asymptotic behavior. In this section, let us only assume that $m_N \xrightarrow[N \rightarrow \infty]{} \infty$ and $n_N \xrightarrow[N \rightarrow \infty]{} \infty$ and see how it affects the limit distribution of $U_N$. 

The principal part of $U_N$ should be the dominant part of the variance. Remember that Proposition~\ref{prop:degenerate:variance_ustat_ep} states that
\begin{equation*}
    \mathbb{V}[U_{N}] = \sum_{(0,0) < (r,c) \le (p,q)} \frac{(m_N-r)!}{m_N!} \frac{(n_N-c)!}{n_N!} V^{(r,c)}.
\end{equation*}
We see that $\mathbb{V}[U_N]$ is the sum of the $p \times q$ terms of the form $\frac{(m_N-r)!}{m_N!} \frac{(n_N-c)!}{n_N!} V^{(r,c)}$. Each term behaves like $\frac{(m_N-r)!}{m_N!} \frac{(n_N-c)!}{n_N!} V^{(r,c)} \asymp m_N^{-r} n_N^{-c}$. The dominant part of $\mathbb{V}[U_N]$ consists of the terms $m_N^{-r} n_N^{-c}$ decreasing the slowest such that $V^{(r,c)} \neq 0$.

There is no equivalent to the previously defined order of degeneracy, but we can redefine principal degrees. Let the family of pairs $((r_\ell,c_\ell))_{1 \le \ell \le L}$ be such that $m_N^{r_1} n_N^{c_1} \asymp ... \asymp m_N^{r_L} n_N^{c_L}$ and $\mathbb{V}[U_N] \asymp \sum_{\ell = 1}^L \frac{V^{(r_\ell, c_\ell)}}{m_N^{r_\ell} n_N^{c_\ell}}$. We can call these pairs the \textit{principal degrees} of $U_N$, by analogy with the previous case. The quantity $\sum_{\ell = 1}^L P^{r_\ell,c_\ell}_{N}$ is called the \textit{principal part} of $U_N$. We call the \textit{principal support graphs} of $U_N$ the graphs $G$ such that 
\begin{itemize}
    \item $(v_1(G), v_2(G)) \in \{(r_\ell,c_\ell) : 1 \le \ell \le L \}$,
    \item $p^G \neq 0$.
\end{itemize}

\begin{example}
    Suppose $(m_N, n_N) = (N, \sqrt{N})$ and $V^{(0,1)} = 0$ but $V^{(0,2)} \neq 0$ and $V^{(1,0)} \neq 0$, then the principal degrees are $(1,0)$ and $(0,2)$ because $m_N = n_N^2 = N$ and $\mathbb{V}[U_N] = N^{-1}(V^{(1,0)} + V^{(0,2)})$. In this case, one valid choice of $\gamma(N)$ is $\gamma(N) = N$.
\end{example}

\begin{example}
    Suppose again that $(m_N, n_N) = (N, \sqrt{N})$, but this time $V^{(0,1)} = V^{(0,2)} = V^{(1,0)} = 0$. If $V^{(1,1)} \neq 0$ and $V^{(0,3)} \neq 0$, then the principal degrees are $(1,1)$ and $(0,3)$ because $m_N n_N = n_N^3 = N^{3/2}$. In this case, one valid choice of $\gamma(N)$ is $\gamma(N) = N^{3/2}$.
\end{example}

In this asymptotic framework, there is no reason that $N^{d/2}$ is the right normalization for the weak convergence of $U$-statistics. If the elements of $((r_\ell,c_\ell))_{1 \le \ell \le L}$ are the principal degrees of $U_N$, then there is a function $\gamma$ such that $m_N^{-r_\ell} n_N^{-c_\ell} \gamma(N) \xrightarrow[N \rightarrow \infty]{} \alpha_\ell$, where $\alpha_\ell > 0$ for all $1 \le \ell \le L$ and $\gamma(N) \mathbb{V}[U_N] = \sum_{1 \le \ell \le L} \alpha_\ell V^{(r_\ell, c_\ell)} + o(1)$. Next, we state the equivalent result to Theorem~\ref{th:degenerate:principal_part} in the new framework. The proof for this theorem is given in~\ref{app:degenerate:proof_principal_part_general}.

\begin{theorem}
    There is a random variable $W$ such that $\sqrt{\gamma(N)} \sum_{\ell = 1}^L P^{r_\ell,c_\ell}_{N} \xrightarrow[]{\mathcal{D}} W$ if and only if $\sqrt{\gamma(N)} (U_N - p^\emptyset) \xrightarrow[]{\mathcal{D}} W$.
    \label{th:degenerate:principal_part_general}
\end{theorem}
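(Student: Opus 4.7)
The plan is to mirror the proof of Theorem~\ref{th:degenerate:principal_part} with the substitution $N^{d/2} \rightsquigarrow \sqrt{\gamma(N)}$, showing that the non-principal contributions to $U_N - p^\emptyset$ are negligible at rate $\sqrt{\gamma(N)}$ and concluding by Slutsky's lemma.

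First, using the orthogonal decomposition~\eqref{eq:decomposition_ustat}, I would write
\begin{equation*}
    U_N - p^\emptyset = \sum_{\ell=1}^L P^{r_\ell, c_\ell}_N + R_N, \qquad R_N := \sum_{\substack{(0,0) < (r,c) \le (p,q) \\ (r,c) \notin \{(r_\ell, c_\ell)\}}} P^{r,c}_N.
\end{equation*}
Since Proposition~\ref{prop:degenerate:variance_ustat_ep} expresses $\mathbb{V}[U_N]$ as an orthogonal sum over pairs $(r,c)$ with no cross terms, the variance of $R_N$ is simply the sum restricted to non-principal pairs:
\begin{equation*}
    \mathbb{V}[R_N] = \sum_{(r,c) \notin \{(r_\ell, c_\ell)\}} \frac{(m_N-r)!}{m_N!} \frac{(n_N-c)!}{n_N!} V^{(r,c)} \asymp \sum_{(r,c) \notin \{(r_\ell, c_\ell)\}} \frac{V^{(r,c)}}{m_N^r n_N^c}.
\end{equation*}

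Next, I would show that $\gamma(N) \mathbb{V}[R_N] \to 0$. Any non-principal $(r,c)$ with $V^{(r,c)} = 0$ contributes nothing. For non-principal $(r,c)$ with $V^{(r,c)} \neq 0$, the definition of $((r_\ell, c_\ell))_\ell$ as realizing the leading order of $\mathbb{V}[U_N]$ forces $m_N^{r_\ell} n_N^{c_\ell} = o(m_N^r n_N^c)$ for every $\ell$; combined with $\gamma(N) \asymp m_N^{r_\ell} n_N^{c_\ell}$, this gives $\gamma(N)/(m_N^r n_N^c) \to 0$. Hence $\sqrt{\gamma(N)}\, R_N \to 0$ in $L^2$, and therefore in probability by Markov's inequality. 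Slutsky's theorem applied to
\begin{equation*}
    \sqrt{\gamma(N)}(U_N - p^\emptyset) = \sqrt{\gamma(N)} \sum_{\ell=1}^L P^{r_\ell, c_\ell}_N + \sqrt{\gamma(N)}\, R_N
\end{equation*}
then delivers both implications of the stated equivalence: whenever either side has a weak limit, so does the other, and the two limits coincide.

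The only obstacle is the bookkeeping step behind $\gamma(N) \mathbb{V}[R_N] \to 0$. The definition in Section~\ref{sec:degenerate:other_asymptotic_frameworks} introduces the principal degrees implicitly through the asymptotic balance $\mathbb{V}[U_N] \asymp \sum_\ell V^{(r_\ell,c_\ell)}/(m_N^{r_\ell} n_N^{c_\ell})$, so some care is needed to rule out any non-principal pair $(r,c)$ with $V^{(r,c)} \neq 0$ whose rate $m_N^{-r} n_N^{-c}$ matches the principal one. Once this is pinned down, the proof reduces to the same Slutsky pattern as in Theorem~\ref{th:degenerate:principal_part}, which in hindsight corresponds to the special case $\gamma(N) = N^d$ with principal degrees $\{(r,c) : r+c = d\}$.
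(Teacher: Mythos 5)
Your proof is correct and follows essentially the same route as the paper's: the paper splits the non-principal pairs into a dominant set $\mathcal{S}_0$ (shown in a separate lemma to have $V^{(r,c)}=0$, hence $P^{r,c}_N=0$ --- exactly the contrapositive of your bookkeeping step that nonzero $V^{(r,c)}$ forces a strictly faster-decaying rate) and a negligible set $\mathcal{S}_+$, then concludes with the same $L^2$-to-$o_P(1)$ and Slutsky argument. The tie-breaking caveat you flag is present, implicitly, in the paper's proof as well, which simply assumes every pair falls into $\mathcal{S}_0\cup\mathcal{S}\cup\mathcal{S}_+$.
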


This theorem says that the limit distribution of $U_N - p^\emptyset$ renormalized by $\sqrt{\gamma(N)}$ is the same as that of its principal part $\sum_{\ell = 1}^L P^{r_\ell,c_\ell}_{N}$, renormalized by the same quantity. Therefore, similar to the initial framework, we shall investigate the asymptotic behavior of $U_N$ by studying its principal part.

In practice, one has to identify the principal part by finding the principal degrees of $U_N$. The principal degrees depend both on the kernel $h$ and the asymptotic behavior of $(m_N, n_N)$. After finding the principal degrees, then a function $\gamma(N)$ can be found. With $\gamma(N)$ and the principal degrees, the coefficients $\alpha_\ell$ can be calculated to yield an expression for the variance.

Now, we derive the equivalent to Theorem~\ref{th:degenerate:gaussian_theorem}, i.e. the convergence result when the principal support graphs of $U_N$ are connected. The proof of this theorem is given in Appendix~\ref{app:degenerate:proof_gaussian_theorem_general}.

\begin{theorem}
    If all principal support graphs of $U_N$ are connected, then
    \begin{equation*}
        \sqrt{\gamma(N)} (U_N - p^\emptyset) \xrightarrow[N \rightarrow \infty]{\mathcal{D}} \mathcal{N}(0, \sigma^2),
    \end{equation*}
    where 
    \begin{equation*}
        \sigma^2 = \sum_{\ell = 1}^L \alpha_\ell V^{(r_\ell, c_\ell)}.
    \end{equation*}
    \label{th:degenerate:gaussian_theorem_general}
\end{theorem}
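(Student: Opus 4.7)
The plan is to follow the same architecture as the proof of Theorem~\ref{th:degenerate:gaussian_theorem}, adjusting only for the new normalization $\sqrt{\gamma(N)}$ and the fact that several principal degrees $(r_\ell,c_\ell)_{1\le\ell\le L}$ may coexist rather than the single regime $r+c=d$. First, by Theorem~\ref{th:degenerate:principal_part_general}, it is enough to show Gaussian convergence of the renormalized principal part
\begin{equation*}
S_N := \sqrt{\gamma(N)} \sum_{\ell=1}^L P^{r_\ell,c_\ell}_N.
\end{equation*}
Expanding each $P^{r_\ell,c_\ell}_N$ through its graph-indexed decomposition and pulling out the natural scale of each summand, I would rewrite
\begin{equation*}
S_N = \sum_{\ell=1}^L \sqrt{\gamma(N)}\, m_N^{-r_\ell/2} n_N^{-c_\ell/2}
\sum_{G\in\Gamma_{r_\ell,c_\ell}} \frac{m_N^{r_\ell/2} n_N^{c_\ell/2}\,\widetilde{P}^{G}_N}{(p-r_\ell)!(q-c_\ell)!\,\lvert\Aut(G)\rvert}.
\end{equation*}

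Next, non-principal support graphs contribute trivially (their $p^G=0$ forces $\widetilde{P}^G_N=0$), so only principal support graphs appear effectively in each inner sum. The hypothesis that every such graph is connected lets me invoke Lemma~\ref{lem:degenerate:convergence_gaussian_projection} jointly over all these graphs across the $L$ principal degrees: the vector with entries $m_N^{r_\ell/2} n_N^{c_\ell/2}\,\widetilde{P}^{G}_N$ converges in distribution to a vector of independent centered Gaussians $W_G$ with variance $p!^2 q!^2 \lvert\Aut(G)\rvert\,\mathbb{E}[(p^G)^2]$. Crucially, Lemma~\ref{lem:degenerate:convergence_gaussian_projection} is already stated with $m_N$ and $n_N$ separately rather than via a single combined scale, so it applies under the mere assumption $m_N,n_N\to\infty$, irrespective of any ratio between them.

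By the defining property of $\gamma(N)$, the deterministic prefactors $\sqrt{\gamma(N)}\,m_N^{-r_\ell/2} n_N^{-c_\ell/2}$ tend to $\sqrt{\alpha_\ell}>0$. Slutsky's lemma then gives
\begin{equation*}
S_N \xrightarrow[N\to\infty]{\mathcal{D}} Z := \sum_{\ell=1}^L \sqrt{\alpha_\ell} \sum_{G\in\Gamma_{r_\ell,c_\ell}} \frac{W_G}{(p-r_\ell)!(q-c_\ell)!\,\lvert\Aut(G)\rvert},
\end{equation*}
and $Z$ is a centered Gaussian. Collecting variances using the independence of the $W_G$ and recognizing the definition of $V^{(r_\ell,c_\ell)}$ yields
\begin{equation*}
\mathbb{V}[Z] = \sum_{\ell=1}^L \alpha_\ell \sum_{G\in\Gamma_{r_\ell,c_\ell}} \frac{p!^2 q!^2\,\mathbb{E}[(p^G)^2]}{(p-r_\ell)!^2 (q-c_\ell)!^2\,\lvert\Aut(G)\rvert} = \sum_{\ell=1}^L \alpha_\ell V^{(r_\ell,c_\ell)} = \sigma^2,
\end{equation*}
as required.

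The main obstacle is largely illusory, since Lemmas~\ref{lem:degenerate:graph_pairs} and~\ref{lem:degenerate:convergence_gaussian_projection} already carry the combinatorial and method-of-moments content that drives the balanced case, and they are agnostic to the relative rates of $m_N$ and $n_N$. The only genuine care point is bookkeeping: one must verify that joint convergence holds across graphs from \textit{different} $\Gamma_{r_\ell,c_\ell}$, not merely within a single class, so that the independence of the limiting $W_G$ can be used to compute $\mathbb{V}[Z]$. This joint convergence is a direct consequence of the underlying moment-matching argument, which treats all $G$ symmetrically and produces the same vanishing of cross-moments as in the balanced case of Theorem~\ref{th:degenerate:gaussian_theorem}.
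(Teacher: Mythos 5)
Your proposal is correct and follows essentially the same route as the paper's own proof: reduce to the principal part via Theorem~\ref{th:degenerate:principal_part_general}, factor out $\sqrt{\gamma(N)}\,m_N^{-r_\ell/2}n_N^{-c_\ell/2}\to\sqrt{\alpha_\ell}$, apply Lemma~\ref{lem:degenerate:convergence_gaussian_projection} jointly to the connected principal support graphs, and sum the independent Gaussian limits. Your added remarks (that graphs with $p^G=0$ drop out, and that the joint convergence in the lemma already spans graphs from different $\Gamma_{r_\ell,c_\ell}$ and needs only $m_N,n_N\to\infty$) are accurate clarifications of points the paper leaves implicit, and your variance $\sum_\ell \alpha_\ell V^{(r_\ell,c_\ell)}$ matches the theorem statement.
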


Unsurprisingly, this theorem states that the limit distribution for $\sqrt{\gamma(N)} (U_N - p^\emptyset)$ is still a Gaussian like in Theorem~\ref{th:degenerate:gaussian_theorem}, but with a different expression for the variance. The new variance consists of terms associated of the principal degrees of $U_N$, depending on the behavior of $m_N$ and $n_N$.

\subsection{Asymmetric kernels}\label{sec:asymmetric}

Until now, we used symmetric kernels and defined $U$-statistics as sums over unordered tuples (eq.~\eqref{eq:ustat}). $U$-statistics can also be defined with arbitrary (possibly asymmetric) kernels via the ordered-tuples average: for any measurable $h : \mathcal{M}_{p,q}(\mathbb{R})\to\mathbb{R}$,
\[
\widetilde{U}_{m,n}(h):=\frac{1}{(m)_p (n)_q}
\sum_{(i_1,\dots,i_p)}\sum_{(j_1,\dots,j_q)}
h\!\big(Y_{(i_1,\dots,i_p);(j_1,\dots,j_q)}\big),
\]
where sums range over $p$- and $q$-tuples without replacement and $(x)_k=x(x-1)\cdots(x-k+1)$.

For any $h$, define its $S_p\times S_q$ symmetrization
\[
h^s(y):=\frac{1}{p!\,q!}\sum_{(\sigma_1, \sigma_2) \in \mathbb{S}_p \times \mathbb{S}_q} h\!\big(y_{\sigma_1,\sigma_2}\big).
\]
The associated (unordered) $U$-statistic used throughout the paper is
\[
U_{m,n}(h^s)
=\binom{m}{p}^{-1}\binom{n}{q}^{-1}\!
\sum_{\substack{\ib \in \mathcal{P}_p([ m ])\\ \jb \in \mathcal{P}_q([ n ])}} h^s\!\big(Y_{\ib,\jb}\big),
\]

\begin{lemma}[Ordered–unordered identity]\label{lem:ord-unord}
For all $m,n$ and all measurable $h$,
\[
\widetilde{U}_{m,n}(h)=\widetilde{U}_{m,n}(h^s)=U_{m,n}(h^s).
\]
\end{lemma}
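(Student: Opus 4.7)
The statement is a combinatorial identity among three averages (ordered in $h$, ordered in $h^s$, unordered in $h^s$). The plan is to prove the two equalities separately, both by reindexing sums under permutations of the tuples of distinct indices, and to track the factorials $(m)_p = p!\binom{m}{p}$ and $(n)_q = q!\binom{n}{q}$. No probabilistic input is needed; in particular, none of the Hoeffding-type machinery of Sections \ref{sec:degenerate:decomposition_probability_space}--\ref{sec:degenerate:decomposition_u} is required.

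\textbf{First equality: $\widetilde U_{m,n}(h)=\widetilde U_{m,n}(h^s)$.} I would expand the definition of $h^s$ inside $\widetilde U_{m,n}(h^s)$ and exchange the sums:
\begin{equation*}
\widetilde U_{m,n}(h^s)
=\frac{1}{p!\,q!}\sum_{(\sigma_1,\sigma_2)\in\mathbb S_p\times\mathbb S_q}
\frac{1}{(m)_p(n)_q}
\sum_{(i_1,\dots,i_p)}\sum_{(j_1,\dots,j_q)}
h\!\big(Y_{(i_{\sigma_1(1)},\dots,i_{\sigma_1(p)});(j_{\sigma_2(1)},\dots,j_{\sigma_2(q)})}\big).
\end{equation*}
For each fixed $(\sigma_1,\sigma_2)$, the substitution $i'_k:=i_{\sigma_1(k)}$, $j'_\ell:=j_{\sigma_2(\ell)}$ is a bijection of the set of $p$-tuples (resp.\ $q$-tuples) of distinct indices in $[m]$ (resp.\ $[n]$) onto itself, so the inner double sum equals the unpermuted one. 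Hence each of the $p!\,q!$ terms in the outer average equals $\widetilde U_{m,n}(h)$, and the equality follows.

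\textbf{Second equality: $\widetilde U_{m,n}(h^s)=U_{m,n}(h^s)$.} Since $h^s$ is $\mathbb S_p\times\mathbb S_q$-invariant by construction, the value $h^s(Y_{(i_1,\dots,i_p);(j_1,\dots,j_q)})$ depends only on the unordered sets $\ib=\{i_1,\dots,i_p\}$ and $\jb=\{j_1,\dots,j_q\}$. Grouping ordered tuples of distinct indices by their underlying sets yields
\begin{equation*}
\sum_{(i_1,\dots,i_p)}\sum_{(j_1,\dots,j_q)}
h^s\!\big(Y_{(i_1,\dots,i_p);(j_1,\dots,j_q)}\big)
= p!\,q!\sum_{\ib\in\mathcal P_p([m])}\sum_{\jb\in\mathcal P_q([n])} h^s(Y_{\ib,\jb}),
\end{equation*}
because each unordered pair $(\ib,\jb)$ corresponds to exactly $p!\,q!$ ordered tuples. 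Dividing by $(m)_p(n)_q=p!\binom{m}{p}\cdot q!\binom{n}{q}$ converts the ordered normalization into $\binom{m}{p}^{-1}\binom{n}{q}^{-1}$, which is precisely the normalization of $U_{m,n}(h^s)$.

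\textbf{Obstacle.} There is no substantive obstacle: the argument is entirely bookkeeping, and the identity is recorded here so that the whole graph-indexed decomposition~\eqref{eq:decomposition_ustat}, together with the asymptotic theorems of Sections \ref{sec:degenerate:principal_part}--\ref{sec:degenerate:gaussian_case}, transfers verbatim from $U_{m,n}(h^s)$ to the asymmetric-kernel object $\widetilde U_{m,n}(h)$ via the symmetrization $h\mapsto h^s$.
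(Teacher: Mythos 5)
Your proof is correct and follows essentially the same route as the paper: grouping ordered tuples of distinct indices by their underlying unordered sets and using $(m)_p(n)_q=p!\binom{m}{p}\,q!\binom{n}{q}$. The only difference is presentational — you split the identity into two reindexing steps, whereas the paper obtains $\widetilde U_{m,n}(h)=U_{m,n}(h^s)$ in a single grouping by noting that each set $(\ib,\jb)$ contributes $\sum_{\sigma_1,\sigma_2}h(Y_{\sigma_1\ib,\sigma_2\jb})=p!\,q!\,h^s(Y_{\ib,\jb})$.
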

\begin{proof}
    Group ordered tuples by their underlying unordered sets. Since $(m)_p\,(n)_q=\binom{m}{p}p!\,\binom{n}{q}q!$, each unordered pair $(\ib,\jb)$ contributes $\sum_{\sigma_1,\sigma_2} h\big(Y_{\sigma_1 \ib, \sigma_2 \jb}\big)=p!\,q!\,h^s(Y_{\ib,\jb})$, which yields the identity.
\end{proof} 

This convention matches exchangeability: our projections live on the latent $\sigma$-algebras $\sigma(H(G))$ indexed by bipartite graphs $G$ and depend only on which row/column/edge latents appear, not on their order. We therefore define principal support graphs of $h$ to be those of $h^s$, and compute principal degree, rates, and variances from $h^s$.

\begin{theorem}[CLT for arbitrary kernels]\label{thm:asym}
Let $h$ be arbitrary (not necessarily symmetric) and define principal support graphs via the projections of $h^s$ onto $L_2^*(G)$. If all principal support graphs are connected, then with the normalization $\gamma(N)$ dictated by the principal degree,
\[
\gamma(N)^{1/2}\Big(\widetilde{U}_{m_N,n_N}(h) - p^{\emptyset}\Big)\;\xrightarrow[N\to\infty]{}\; \mathcal{N}(0,\sigma^2),
\]
with the same variance expression as Theorem~\ref{th:degenerate:gaussian_theorem} using $h^s$, and $p^{\emptyset} = \mathbb{E}[h^s(Y_{\ib,\jb})] = \mathbb{E}[h(Y_{\ib,\jb})]$.
\end{theorem}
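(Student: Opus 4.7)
The plan is to reduce Theorem~\ref{thm:asym} to the symmetric-kernel CLTs already established in Theorems~\ref{th:degenerate:gaussian_theorem} and~\ref{th:degenerate:gaussian_theorem_general} via the ordered-unordered identity. By Lemma~\ref{lem:ord-unord},
\[
\widetilde{U}_{m_N, n_N}(h) = U_{m_N, n_N}(h^s),
\]
and since $h^s$ is $\mathbb{S}_p \times \mathbb{S}_q$-symmetric, $U_{m_N, n_N}(h^s)$ is exactly a $U$-statistic of the type treated throughout the paper. The projections $p^G$, the subspaces $L_2^*(G)$, the principal degree and the principal support graphs appearing in the statement are all defined via $h^s$, so the connectedness hypothesis applies directly to the object on the right-hand side.

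Next I would check that the centering $p^\emptyset = \mathbb{E}[h^s(Y_{\ib, \jb})]$ equals $\mathbb{E}[h(Y_{\ib, \jb})]$. This is a one-line consequence of row-column exchangeability: for every $(\sigma_1, \sigma_2) \in \mathbb{S}_p \times \mathbb{S}_q$, the submatrices $Y_{\sigma_1 \ib, \sigma_2 \jb}$ and $Y_{\ib, \jb}$ have the same distribution, so every term in the average defining $h^s$ has the same expectation $\mathbb{E}[h(Y_{\ib, \jb})]$.

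The conclusion is then immediate. Applying Theorem~\ref{th:degenerate:gaussian_theorem_general} (or Theorem~\ref{th:degenerate:gaussian_theorem} in the balanced regime $m_N/N \to \rho \in (0,1)$) to the symmetric kernel $h^s$ yields
\[
\gamma(N)^{1/2} \bigl( U_{m_N, n_N}(h^s) - p^\emptyset \bigr) \xrightarrow[N \to \infty]{\mathcal{D}} \mathcal{N}(0, \sigma^2),
\]
with $\sigma^2$ computed from $h^s$ exactly as prescribed by Theorem~\ref{th:degenerate:gaussian_theorem}. Substituting the identity of Lemma~\ref{lem:ord-unord} on the left side gives the statement.

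There is no real obstacle beyond this bookkeeping: the conceptual work was already done in the earlier sections, where the orthogonal graph-indexed decomposition and Lemma~\ref{lem:degenerate:convergence_gaussian_projection} do the heavy lifting. The one point worth emphasizing is that all graph-indexed objects must be read off from $h^s$ rather than from $h$ directly, since the $\Gamma_{r,c}$ indexing and the symmetrized increments $\widetilde{p}^G$ require a kernel that is symmetric under separate row and column permutations in order to be well defined.
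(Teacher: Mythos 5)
Your proposal is correct and follows essentially the same route as the paper: reduce via the ordered--unordered identity of Lemma~\ref{lem:ord-unord} to the symmetric kernel $h^s$, note that the centering and all graph-indexed objects are those of $h^s$, and apply Theorem~\ref{th:degenerate:gaussian_theorem}. Your explicit check that $\mathbb{E}[h^s(Y_{\ib,\jb})]=\mathbb{E}[h(Y_{\ib,\jb})]$ by exchangeability is a small addition the paper states without proof, but the argument is otherwise the same.
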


\begin{proof}[Proof]
By Lemma~\ref{lem:ord-unord}, $\widetilde{U}_{m,n}(h)=U_{m,n}(h^s)$. Apply Theorem~\ref{th:degenerate:gaussian_theorem} to $h^s$. Symmetrization does not change $\sigma(H(G))$, projections, principal graphs, or the variance computation. Finally, conclude with $\mathbb{E}[h^s(Y_{\ib,\jb})] = \mathbb{E}[h(Y_{\ib,\jb})]$.
\end{proof}

For context, \citet{janson1991asymptotic} study an order-sensitive definition that evaluates an unsymmetric kernel once per increasing tuple (no symmetrization). If one adopts that definition here, the ``connectedness implies Gaussian'' conclusion and the normalization from the principal degree are unchanged: only the variance is reweighted by fixed positional factors tied to the chosen ordering convention (see \citealp{janson1991asymptotic}). As these weights are not exchangeability-invariant, and add no substantive insight for our purposes, we do not pursue them further.

\section{Conclusion}

In this paper, we have introduced a new orthogonal decomposition for $U$-statistics on dissociated RCE matrices, providing a framework to characterize their asymptotic behavior. This decomposition relies on partitioning the probability space into orthogonal subspaces generated by specific sets, termed graph sets, of AHK variables. The asymptotic behavior of a $U$-statistic is determined by its principal part, which is composed of the leading non-zero terms of the decomposition. The graphs corresponding to these terms are referred to as the principal support graphs.

Principal support graphs play a central role in determining the asymptotic behavior of $U$-statistics. We have shown that all principal support graphs of a given $U$-statistic have the same number of nodes, which defines the principal degree. The principal degree corresponds to the order of degeneracy in traditional $U$-statistics of i.i.d. variables, determining the rate of convergence to the limit distribution. For that reason, degeneracy seems to be a desirable property of $U$-statistics for statistical applications, where a faster rate of convergence improves the precision of estimators, leading to tighter confidence intervals and more powerful hypothesis tests.

Nevertheless, identifying limit distributions in degenerate cases remains a challenging task, even for $U$-statistics of i.i.d. variables. For RCE matrices, we have shown that a simple assumption on the topology of principal support graphs, namely connectedness, ensures that the limit distribution is Gaussian. When this assumption holds, we obtain a simple limit distribution, but also, in degenerate cases, a rate of convergence exceeding $\sqrt{N}$. While a similar result has been discussed in \cite{janson1991asymptotic}, this highlights a significant difference with $U$-statistics of i.i.d. variables, for which degenerate cases yield more complex limit distributions, expressed as higher-degree polynomials of Gaussians with no straightforward expressions \citep{rubin1980asymptotic, lee1990u}. Future research could focus on characterizing the limit distributions of $U$-statistics on RCE matrices under different assumptions. In particular, the case where principal support graphs exhibit multiple connected components presents an intriguing avenue for exploration, though we anticipate that the resulting limit distributions would be more complex.

\section*{Acknowledgements}

The author thanks Sophie Donnet, Fran\c{c}ois Massol and St\'ephane Robin for many fruitful discussions and insights. This work was funded by the grant ANR-18-CE02-0010-01 of the French National Research Agency ANR (project EcoNet) and a grant from R\'egion \^Ile-de-France.

\newpage

\begin{appendix}

\newpage

\section{Cheat sheet} 
\label{app:cheat_sheet}

Essential notions and notation in the paper.
\vspace{1em}

\noindent\textbf{Main assumptions on data.} \\
Row-column exchangeable matrix: for any pair of permutations $(\sigma_1, \sigma_2)$, $(Y_{\sigma_1(i)\sigma_2(j)})_{i,j} \overset{\mathcal{D}}{=} Y$. \\
Dissociation: for any $(m,n)$, $(Y_{ij})_{i \le m, j \le n}$ is independent of $(Y_{ij})_{i > m, j > n}$. \\
Asymptotic regime: Same row and column growth. 
\vspace{1em}

\noindent\textbf{Main decomposition.}\\
Probability space decomposition: $L_2(G) \bigoplus^\perp_{F \subseteq G} L_2^*(F)$.\\
$U$-statistic decomposition: $U_{m,n} = \sum_{(0,0) \le (r,c) \le (p,q)} P^{r,c}_{m,n} = \sum_{G \in \Gamma^{-}_{r,c}} \left[(p-r)! (q-c)! \lvert \Aut(G) \rvert \right]^{-1} \widetilde{P}^{G}_{m,n}$.

\vspace{1em}

\noindent\textbf{Key concepts.} \\
Principal support graphs: Smallest graphs $G$ with nonzero contribution $\widetilde p^{G}_{\ib,\jb}$. \\
Principal degree $d$: Common value of $|V(G)|$ over principal support graphs. \\
Connectedness: All the principal support graphs are connected.
\vspace{1em}

\noindent\textbf{Rules to characterize the asymptotic behavior of $U$-statistics.} \\
Rate and degeneracy: Normalize by $N^{d/2}$. Nondegenerate if $d=1$. Degenerate if $d>1$. \\
Gaussianity: Under connectedness (always satisfied when nondegenerate), the centered and scaled statistic is asymptotically normal. 

\newpage

\noindent\textbf{Main notations (from Tables~\ref{tab:notations_intro},~\ref{tab:notations_1} and~\ref{tab:notations_2}).}

\small
\begin{tabular}{@{}ll@{}}

$Y=(Y_{ij})$ & Infinite dissociated row-column exchangeable (RCE) matrix \\
$m,n$ & Numbers of rows and columns used to compute $U_{m,n}$ \\
$p,q$ & Kernel dimensions, size of the submatrix given to $h$ \\
$r,c$ & Graph dimensions, numbers of row and column nodes \\
$\ib, \jb$ & Sets of row and column node indices \\
$h:\mathcal M_{p,q}\to\mathbb R$ & Symmetric kernel in $p\times q$ entries \\
$U_{m,n}$ & $U$-statistic on the leading $m \times n$ submatrix of $Y$ \\
$U_{m,n}^h$ & Same as above, emphasizing the kernel $h$ \\
$N$ & Total size $m+n$ (used for rates) \\
$(m_N,n_N)$ & Sequence of matrix sizes in the asymptotic framework \\
$U_N^h$ & Shorthand for $U_{m_N,n_N}^h$ along the sequence $(m_N,n_N)$ \\
$\rho$ & $\lim_{N\to\infty} m_N/N \in (0,1)$ with $N=m_N+n_N$ \\

AHK variables & Latents variables $(\xi_i),(\eta_j),(\zeta_{ij})_{i,j}$ in the Aldous--Hoover--Kallenberg representation \\
$K_{\ib,\jb}$ & Complete bipartite graph with row set $\ib$ and column set $\jb$, edge set $\ib\times\jb$ \\
$K_{r,c}$ & Shorthand for $K_{[r],[c]}$ with $[r]=\{1,\dots,r\}$ and $[c]=\{1,\dots,c\}$ \\
Minimal set & Only one representative per isomorphism class in the set \\
$\Gamma_{r,c}$ & Minimal set of subgraphs of $K_{r,c}$ with $r$ row and $c$ column nodes \\
$\Gamma_{p,q}^{-}$ & Minimal set of subgraphs of $K_{p,q}$ with at most $p$ row and $q$ column nodes \\
$|V(G)|$ & Total nodes of $G$, equals $|V_1(G)|+|V_2(G)|$ ($r+c$ for $G\in\Gamma_{r,c}$) \\
$G_1 \sim G_2$ & $G_1$ and $G_2$ are isomorphic \\
$\mathrm{Aut}(G)$ & Automorphism group of $G$ \\
$H(G)$ & AHK variables attached to graph $G$: \\
& \qquad $H(G) = ((\xi_i)_{i\in V_1(G)},(\eta_j)_{j\in V_2(G)},(\zeta_{ij})_{(i,j)\in E(G)})$ \\

$L_2(G)$ & Space of square-integrable variables measurable with respect to $\sigma(H(G))$ \\
$L_2^\ast(G)$ & Subspace of $L_2(G)$ with $\mathbb E[X\mid H(F)]=0$ for all $F\subset G$ \\

$p^F$ & Orthogonal projection on $L_2^\ast(F)$: for $X\in L_2(G)$, $F \subseteq G$, \\
& \qquad $p^\varnothing(X)=\mathbb E[X]$ and
$p^F(X)=\mathbb E[X\mid H(F)]-\sum_{F'\subset F} p^{F'}(X)$ \\

$\widetilde p^{G}_{\ib,\jb}$ & Symmetrized, shape-indexed increment for $G\in\Gamma_{r,c}$:
$\sum_{F\subseteq K_{\ib,\jb},\, F\sim G} p^{F}$ \\
$\widetilde{P}^{G}_{m,n}$ & Contribution of the isomorphism class of $G$: \\
& \qquad $\widetilde{P}^{G}_{m,n} = \binom{m}{p}^{-1} \binom{n}{q}^{-1} \sum_{\ib\in\mathcal P_p([m])} \sum_{\jb\in\mathcal P_q([n])}  \widetilde{p}^{G}_{\ib, \jb}$ \\
$P^{r,c}_{m,n}$ & Contribution of size $(r,c)$ in $U_{m,n}$: \\
& \qquad $P^{r,c}_{m,n} = \sum_{G \in \Gamma_{r,c}} \left[(p-r)! (q-c)! \lvert \Aut(G) \rvert \right]^{-1} \widetilde{P}^{G}_{m,n}$ 
\end{tabular}

\normalsize

\newpage

\section{Proofs for Section~\ref{sec:degenerate:decomposition_probability_space}}
\label{app:degenerate:proof_decomposition_probability_space}

\begin{proof}[Proof of Proposition~\ref{prop:degenerate:l2_direct_sum}]
    We show that for all $F$ and $F'$ such that $F' \subset F$, we have that $\mathbb{E}[p^F(X) \mid H(F')] = 0$ by induction on $F$. First, notice that $p^\emptyset(X) = \mathbb{E}[X] \in L_2^*(\emptyset)$ being the space of constant variables. Next, fix $F$ and suppose that the induction hypothesis is true for all $\overline{F} \subset F$, i.e. for all $\overline{F}$ and $F'$ such that $F' \subseteq \overline{F} \subset F$, we have that $\mathbb{E}[p^{\overline{F}}(X) \mid H(F')] = 0$. Now we can calculate for all $F' \subset F$,
\begin{equation*}
\begin{split}
    \mathbb{E}[p^F(X) \mid H(F')] &= \mathbb{E}[\mathbb{E}[X \mid H(F) ]\mid H(F')] - \sum_{\overline{F} \subset F} \mathbb{E}[p^{\overline{F}}(X) \mid H(F')] \\
    &= \mathbb{E}[ X \mid H(F')] - p^{F'}(X) - \sum_{\substack{\overline{F} \subset F \\ \overline{F} \neq F'}} \mathbb{E}[p^{\overline{F}}(X) \mid H(F')] \\
    &= \sum_{\overline{F} \subset F'} \mathbb{E}[p^{\overline{F}}(X) \mid H(F')] - \sum_{\substack{\overline{F} \subset F \\ \overline{F} \neq F'}} \mathbb{E}[p^{\overline{F}}(X) \mid H(F')] \\
    &= - \sum_{\substack{\overline{F} \subset F \\ \overline{F} \not\subset F'}} \mathbb{E}[p^{\overline{F}}(X) \mid H(F')] \\
    &= - \sum_{\substack{\overline{F} \subset F \\ \overline{F} \not\subset F'}} \mathbb{E}[p^{\overline{F}}(X) \mid H(F' \cap \overline{F})].
\end{split}
\end{equation*}     
By the induction hypothesis, all the terms of this sum are equal to 0, which concludes the proof by induction.
\end{proof}

\section{Proofs for Section~\ref{sec:degenerate:variance}}
\label{app:degenerate:proof_variance}

\begin{proof}[Proof of Proposition~\ref{prop:degenerate:variance_ustat_ep}]
\begin{equation*}
\begin{split}
    \mathbb{V}[U_{m,n}] &= \sum_{(0,0) < (r,c) \le (p,q)} \mathbb{V}[P^{r,c}_{m,n}] \\
    &= \sum_{(0,0) < (r,c) \le (p,q)}  \binom{m}{p}^{-2} \binom{n}{q}^{-2} \sum_{\substack{\ib,\ib' \in \mathcal{P}_p([ m ])\\ \jb,\jb' \in \mathcal{P}_q([ n ])}} \sum_{\substack{G \subseteq K_{\ib, \jb}, G' \subseteq K_{\ib', \jb'} \\ (v_1(G), v_2(G)) = (r,c) \\ (v_1(G'), v_2(G')) = (r,c)}}   \Cov(p^{G}, p^{G'}) \\
    &=  \sum_{(0,0) < (r,c) \le (p,q)}  \binom{m}{p}^{-1} \binom{n}{q}^{-1}  \binom{m-r}{p-r} \binom{n-c}{q-c} r! \binom{p}{r} c! \binom{q}{c} \sum_{G \in \Gamma_{r,c}} \lvert \Aut(G) \rvert^{-1} \mathbb{V}[p^{G}] \\
    &=  \sum_{(0,0) < (r,c) \le (p,q)}  \binom{m}{r}^{-1} \binom{n}{c}^{-1} r! \binom{p}{r}^2 c! \binom{q}{c}^2 \sum_{G \in \Gamma_{r,c}} \lvert \Aut(G) \rvert^{-1} \mathbb{E}[(p^G)^2] \\
    &= \sum_{(0,0) < (r,c) \le (p,q)} \frac{(m-r)!}{m!} \frac{(n-c)!}{n!} V^{(r,c)} \\
\end{split}
\end{equation*}
\end{proof}

\begin{proof}[Proof of Lemma~\ref{lem:degenerate:var_pg}]
    Let $G \in \Gamma_{r,c}$.
    \begin{equation*}
    \begin{split}
        \mathbb{V}[\widetilde{P}^{G}_{m,n}] &= \binom{m}{p}^{-2} \binom{n}{q}^{-2} \sum_{\substack{\ib,\ib' \in \mathcal{P}_p([ m ])\\ \jb,\jb' \in \mathcal{P}_q([ n ])}} \Cov(\widetilde{p}^{G}_{\ib, \jb}, \widetilde{p}^{G}_{\ib', \jb'}) \\
        &= \binom{m}{p}^{-2} \binom{n}{q}^{-2} \sum_{\substack{\ib,\ib' \in \mathcal{P}_p([ m ])\\ \jb,\jb' \in \mathcal{P}_q([ n ])}} \sum_{\Phi, \Phi' \in \mathbb{S}_p \times \mathbb{S}_q} \Cov(p^{\Phi G_{\ib,\jb}}, p^{\Phi' G_{\ib',\jb'}}) \\
    \end{split}
    \end{equation*}
where for all $(\ib, \jb) \in \mathcal{P}_p([ m ]) \times \mathcal{P}_q([ n ])$, $G_{\ib, \jb}$ is any graph of $K_{\ib, \jb}$ which is isomorphic to $G$. 
    
    Now see that if $\Phi G_{\ib,\jb} \neq \Phi' G_{\ib',\jb'}$, then $\Cov(p^{\Phi G_{\ib,\jb}}, p^{\Phi' G_{\ib',\jb'}}) = 0$. Otherwise $\Phi G_{\ib,\jb} = \Phi' G_{\ib',\jb'}$, then $\Cov(p^{\Phi G_{\ib,\jb}}, p^{\Phi' G_{\ib',\jb'}}) = \mathbb{V}[p^{G}] = \mathbb{E}[(p^{G})^2]$. So, it follows that
\begin{equation*}
        \mathbb{V}[\widetilde{P}^{G}_{m,n}] = \binom{m}{p}^{-2} \binom{n}{q}^{-2} \sum_{\substack{\ib,\ib' \in \mathcal{P}_p([ m ])\\ \jb,\jb' \in \mathcal{P}_q([ n ])}} \sum_{\Phi, \Phi' \in \mathbb{S}_p \times \mathbb{S}_q} \mathds{1}(\Phi G_{\ib,\jb} = \Phi' G_{\ib',\jb'}) \mathbb{E}[(p^{G})^2].
\end{equation*}
Finally, applying Lemma~\ref{lem:degenerate:count_equal_graphs}, we have
\begin{equation*}
\begin{split}
        \mathbb{V}[\widetilde{P}^{G}_{m,n}] &= \binom{m}{p}^{-2} \binom{n}{q}^{-2} \frac{m! (m-r)!}{(m-p)!^2} \frac{n! (n-c)!}{(n-q)!^2} \lvert \Aut(G) \rvert \mathbb{E}[(p^{G})^2] \\
        &= \frac{(m-r)!}{m!} \frac{(n-c)!}{n!} p!^2 q!^2 \rvert \Aut(G) \lvert \mathbb{E}[(p^{G})^2]. 
\end{split}
\end{equation*}
\end{proof}

\begin{proof}[Proof of Lemma~\ref{lem:degenerate:count_equal_graphs}]
    First, fix $\ib_1, \jb_1, \Phi_1$. Write $G^1 := \Phi_1 G^1_{\ib_1, \jb_1}$. We count the number of picks for $\ib_2, \jb_2, \Phi_2$ such that $\Phi_2 G^2_{\ib_2, \jb_2} = G^1$. 
    
    $\ib_2$ and $\jb_2$ must contain the $r$ row nodes and the $c$ column nodes of $G^1$ and $\Phi_2$ must place these nodes in the same order than in $G^1$, or belong to its automorphism group. This happens for $\binom{m-r}{p-r} \binom{n-c}{q-c}$ picks for $(\ib_2, \jb_2)$ and for each, there are $(p-r)! (q-c)! \lvert \Aut(G) \rvert$ valid picks for $\Phi_2$. 
    
    This happens for all $\binom{m}{p} \binom{n}{q}$ picks of $(\ib_1, \jb_1)$ and $p! q!$ picks of $\Phi_1$. Therefore,  
\begin{equation*}
\begin{split}
    \sum_{\substack{\ib_1, \ib_2 \in \mathcal{P}_p([ m ]) \\ \jb_1, \jb_2 \in \mathcal{P}_q([ n ])}} \sum_{\Phi_1, \Phi_2 \in \mathbb{S}_p \times \mathbb{S}_q} &\mathds{1}(\Phi_1 G^1_{\ib_1, \jb_1} = \Phi_2 G^2_{\ib_2, \jb_2}) \\
    &= \binom{m}{p} \binom{n}{q} \binom{m-r}{p-r} \binom{n-c}{q-c} p! q! (p-r)! (q-c)! \lvert \Aut(G) \rvert,
\end{split}
\end{equation*}    
which develops into the form given by this lemma.
\end{proof}

\section{Proofs for Section~\ref{sec:degenerate:limit_distribution}}
\label{app:degenerate:proof_principal_part}

\begin{proof}[Proof of Theorem~\ref{th:degenerate:principal_part}]
    Since, $d-1$ is the order of degeneracy, we have $P^{r,c}_{N} = 0$ for all $(r,c)$ such that $r+c < d$. Therefore, we have $U_N - p^\emptyset - \sum_{\substack{(0,0) \le (r,c) \le (p,q) \\ r+c = d}} P^{r,c}_{N} = \sum_{\substack{(0,0) \le (r,c) \le (p,q) \\ r+c > d}} P^{r,c}_{N}$. So
\begin{equation*}
\begin{split}
    \mathbb{V}\left[N^{d/2} \left(U_N - p^\emptyset - \sum_{\substack{(0,0) \le (r,c) \le (p,q) \\ r+c = d}} P^{r,c}_{N}\right)\right] &= N^{d} \sum_{\substack{(0,0) \le (r,c) \le (p,q) \\ r+c > d}} \mathbb{V}[P^{r,c}_{N}]  \\
    &= N^{d} \sum_{\substack{(0,0) \le (r,c) \le (p,q) \\ r+c > d}} \frac{(m_N - r)!}{m_N!} \frac{(n_N - c)!}{n_N!} V^{(r,c)}. \\
\end{split}    
\end{equation*}  
But for all $(r,c)$, we have $\frac{(m_N - r)!}{m_N!} \frac{(n_N - c)!}{n_N!} = O(N^{-r-c})$, therefore
\begin{equation*}
\begin{split}
    \mathbb{V}\left[N^{d/2} \left(U_N - p^\emptyset - \sum_{\substack{(0,0) \le (r,c) \le (p,q) \\ r+c = d}} P^{r,c}_{N}\right)\right] &= N^d \times O\left(\sum_{\substack{(0,0) \le (r,c) \le (p,q) \\ r+c > d}} N^{-r-c}\right) \\
    &= N^d \times o(N^{-d}) \\
    &= o(1).
\end{split}    
\end{equation*}     

Finally, this implies that $N^{d/2} (U_N - p^\emptyset) = N^{d/2} \sum_{\substack{(0,0) \le (r,c) \le (p,q) \\ r+c = d}} P^{r,c}_{N} + o_P(1)$, which proves the theorem.
\end{proof}

\section{Proofs for Section~\ref{sec:degenerate:gaussian_case}}
\label{app:degenerate:proof_gaussian_case}

\begin{proof}[Proof of Lemma~\ref{lem:degenerate:graph_pairs}]
    For some $\ell \in [ K ]$, denote $G_{1:k}^{(-\ell)} = \cup_{\substack{i=1 \\ i \neq \ell}}^k G_i$. We have
\begin{equation*}
\begin{split}
    \mathbb{E}[\prod_{k = 1}^K p^{G_k}]  &= \mathbb{E}[\mathbb{E}[\prod_{k = 1}^K p^{G_k} \mid H(G_{1:K}^{(-\ell)})]] \\ 
    &= \prod_{\substack{k = 1 \\ k \neq \ell}}^K p^{G_k} \mathbb{E}[\mathbb{E}[p^{G_\ell}\mid H(G_{1:K}^{(-\ell)})]] \\
    &= \prod_{\substack{k=1 \\ k \neq \ell}}^K p^{G_k} \mathbb{E}[\mathbb{E}[p^{G_\ell}\mid H(G_\ell \cap G_{1:K}^{(-\ell)})]].
\end{split}
\end{equation*}
Suppose there is a vertex or edge of a $G_\ell$ that does not belong to any other $G_k$, $k \neq \ell$. In this case, $G_\ell \cap G_{1:K}^{(-\ell)} \subset G_\ell$, so $\mathbb{E}[p^{G_\ell}\mid H(G_\ell \cap G_{1:K}^{(-\ell)})] = 0$, which proves the first result.

From that result, if $\mathbb{E}[\prod_{k = 1}^K p^{G_k}] \neq 0$ and no vertex belongs to more than two of $G_1, ..., G_K$, then each vertex and edge belongs to exactly two of them. This also means that every connected component must belong to exactly two of them. Therefore, if all graphs are connected, then these graphs coincide in pairs.
\end{proof}

\begin{proof}[Proof of Lemma~\ref{lem:degenerate:convergence_gaussian_projection}]
    Let $a_k$ be nonnegative integers. For all $(\ib, \jb) \in \mathcal{P}_p([ m_N ]) \times \mathcal{P}_q([ n_N ])$, let $G_{k, \ib, \jb}$ be a graph of $K_{\ib, \jb}$ which is isomorphic to $G_k$. Then
    
    \begin{equation}
    \begin{split}
        \mathbb{E}\Big[\prod_{k=1}^{K} (&m_N^{r_k/2} n_N^{c_k/2} \widetilde{P}^{G_k}_{N})^{a_k}\Big] \\
        &= m_N^{\sum_{k=1}^K a_k r_k/2} \binom{m_N}{p}^{-\sum_{k=1}^K a_k} n_N^{\sum_{k=1}^K a_k c_k/2} \binom{n_N}{q}^{-\sum_{k=1}^K a_k} \mathbb{E}\left[\prod_{k=1}^{K} \left(\sum_{\substack{\ib_k \in \mathcal{P}_p([ m_N ])\\ \jb_k \in \mathcal{P}_q([ n_N ])}} \widetilde{p}^{G_k}_{\ib_k, \jb_k}\right)^{a_k}\right] ,
        \label{eq:degenerate:develop_convergence_projection}
    \end{split}
    \end{equation}
    where we can develop    
    \begin{equation*}
    \begin{split}
        \mathbb{E}\left[\prod_{k=1}^{K} \left(\sum_{\substack{\ib_k \in \mathcal{P}_p([ m_N ])\\ \jb_k \in \mathcal{P}_q([ n_N ])}} \widetilde{p}^{G_k}_{\ib_k, \jb_k}\right)^{a_k}\right] &= \sum_{\substack{\ib^\ell_k \in \mathcal{P}_p([ m_N ])\\ \jb^\ell_k \in \mathcal{P}_q([ n_N ])}} \mathbb{E}\left[\prod_{k=1}^{K} \prod_{\ell=1}^{a_k} \widetilde{p}^{G_k}_{\ib^\ell_k, \jb^\ell_k}\right] \\
        &= \sum_{\substack{\ib^\ell_k \in \mathcal{P}_p([ m_N ])\\ \jb^\ell_k \in \mathcal{P}_q([ n_N ])}} \sum_{\Phi^k_\ell \in \mathbb{S}_p \times \mathbb{S}_q} \mathbb{E}\left[\prod_{k=1}^{K} \prod_{\ell=1}^{a_k} p^{\Phi^\ell_k G_{k, \ib^\ell_k, \jb^\ell_k}}\right]. \\
    \end{split}
    \end{equation*}
    
    Lemma~\ref{lem:degenerate:graph_pairs} states that $\mathbb{E}[\prod_{k=1}^{K} \prod_{\ell=1}^{a_k} p^{\Phi^\ell_k G_{k, \ib^\ell_k, \jb^\ell_k}}] \neq 0$ if and only if either all the $\Phi^\ell_k G_{k, \ib^\ell_k, \jb^\ell_k}$ coincide in pairs (and only in pairs), or no vertex appears in exactly one of these graphs and at least one vertex appears in at least three. 
    
    In the second case, assume without loss of generality that a row node appears in three graphs. Then $G^*_{(\ib^\ell_k), (\jb^\ell_k)} := \cup_{k=1}^K \cup_{j = 1}^{a_k} \Phi^\ell_k G_{k, \ib^\ell_k, \jb^\ell_k}$ has $v_1(G^*_{(\ib^\ell_k), (\jb^\ell_k)})$ row nodes and $v_2(G^*_{(\ib^\ell_k), (\jb^\ell_k)})$ column nodes, where $\max r_k \le v_1(G^*_{(\ib^\ell_k), (\jb^\ell_k)}) \le \sum_{k=1}^K a_k r_k /2 - 1$ and $\max c_k \le v_2(G^*_{(\ib^\ell_k), (\jb^\ell_k)}) \le \sum_{k=1}^K a_k c_k /2 - 1$ (we have $\max r_k \le \sum_{k=1}^K a_k r_k /2 - 1$ and $\max c_k \le \sum_{k=1}^K a_k c_k /2 - 1$, else $\mathbb{E}[\prod_{k=1}^{K} \prod_{\ell=1}^{a_k} p^{\Phi^\ell_k G_{k, \ib^\ell_k, \jb^\ell_k}}] = 0$).
    
    Let $(\max r_k, \max c_k) \le (r*, c*) \le (p,q)$. Let us count the number of terms of the sum such that $v_1(G^*_{(\ib^\ell_k), (\jb^\ell_k)}) = r^*$ and $v_2(G^*_{(\ib^\ell_k), (\jb^\ell_k)}) = c^*$. There are exactly $\binom{m_N}{r^*} \binom{n_N}{c^*}$ ways to pick $r^*$ row nodes and $c^*$ nodes for $G^*_{(\ib^\ell_k), (\jb^\ell_k)}$. Now, for a specific set of $r^*$ row nodes and $c^*$ column nodes, for each $1 \le k \le K$, $1 \le \ell \le a_k$, there are $\binom{r^*}{r_k} \binom{c^*}{c_k} \binom{m_N-r^*}{p - r_k} \binom{n_N-c^*}{q - c_k}$ ways to pick $(\ib^\ell_k, \jb^\ell_k)$ such that the nodes of $G_{k, \ib^\ell_k, \jb^\ell_k}$ are contained in the $r^*$ specific row nodes and $c^*$ specific column nodes. Therefore, there are at most $p!q!\binom{r^*}{r_k} \binom{c^*}{c_k} \binom{m_N-r^*}{p - r_k} \binom{n_N-c^*}{q - c_k}$ picks for $(\ib^\ell_k, \jb^\ell_k)$ and $\Phi_k^\ell$. Finally, the number of terms is smaller than
\begin{equation*}
\begin{split}
    B^{r^*, c^*}_N &:= \binom{m_N}{r^*} \binom{n_N}{c^*} \prod_{k = 1}^K \prod_{\ell = 1}^{a_k} p! q! \binom{r^*}{r_k} \binom{m_N-r^*}{p-r_k} \binom{n_N-c^*}{q-c_k} \\
    &= \binom{m_N}{r^*} \binom{n_N}{c^*} \prod_{k = 1}^K \left[ p! q! \binom{r^*}{r_k} \binom{m_N-r^*}{p-r_k} \binom{n_N-c^*}{q-c_k}\right]^{a_k} \\
    &= O\left(m_N^{r^*} n_N^{c^*} \prod_{k = 1}^K \left[ m_N^{p-r_k} n_N^{q-c_k}\right]^{a_k} \right) \\
    &= O\left(m_N^{r^* + \sum_{k=1}^K a_k(p-r_k)} n_N^{c^* + \sum_{k=1}^K a_k(q-c_k)} \right). \\
\end{split}
\end{equation*}
The total number of these terms is 
\begin{equation*}
\begin{split}
    B_N &\le \sum_{(\max r_k, \max c_k) \le (r^*,c^*) \le (\sum_{k=1}^K a_k r_k/ 2 - 1, \sum_{k=1}^K a_k c_k/ 2)} B^{r^*, c^*}_N \\
    &= O(B^{\sum_{k=1}^K a_k r_k/ 2 - 1, \sum_{k=1}^K a_k c_k/ 2}_N) \\
    &= O\left(m_N^{\sum_{k=1}^K a_k(p-r_k/2) - 1} n_N^{\sum_{k=1}^K a_k(q-c_k/2)} \right) \\
    &= o\left(m_N^{\sum_{k=1}^K a_k(p-r_k/2)} n_N^{\sum_{k=1}^K a_k(q-c_k/2)} \right).
\end{split}
\end{equation*}
We notice that the contribution of these terms is $o(1)$ in equation~\eqref{eq:degenerate:develop_convergence_projection}.

Now, there remains the terms of the first case, where the $\Phi^\ell_k G_{k, \ib^\ell_k, \jb^\ell_k}$ coincide in pairs. Note that since the $G_k$ are non-isomorphic, only graphs arising for the permutations of the same graph $G_k$ can coincide. Therefore, the $a_k$ are necessarily even. Furthermore, for each $k$, there are $a_k/2$ different pairs of coinciding graphs $\Phi^\ell_k G_{k, \ib^\ell_k, \jb^\ell_k}$. There are $\frac{a_k !}{2^{a_k/2} (a_k/2)!}$ ways to partition a set of $a_k$ graphs into $a_k/2$ pairs. 

Fix $k, \ell_1, \ell_2$. The number of picks for $\ib^{\ell_1}_k, \jb^{\ell_1}_k, \ib^{\ell_2}_k, \jb^{\ell_2}_k, \Phi^{\ell_1}, \Phi^{\ell_2}$ such that $\Phi^{\ell_1}_k G_{k, \ib^{\ell_1}_k, \jb^{\ell_1}_k} = \Phi^{\ell_2}_k G_{k, \ib^{\ell_2}_k, \jb^{\ell_2}_k}$ is given by Lemma~\ref{lem:degenerate:count_equal_graphs}. Accounting for all $a_k/2$ pairs of the type $(\ell_1, \ell_2)$, there are   
\begin{equation*}
    \frac{m_N! (m_N-r_k)!}{(m_N-p)!^2} \frac{n_N! (n_N-c_k)!}{(n_N-q)!^2} \lvert \Aut(G) \rvert.
\end{equation*}
    
Therefore, taking into account the number of possible pairings and the picks for all $1 \le k \le K$, $1 \le \ell \le a_k$, there are 
\begin{equation*}
\begin{split}
    A_N &= \prod_{k=1}^K \frac{a_k !}{2^{a_k/2} (a_k/2)!} \left( \frac{m_N! (m_N-r_k)!}{(m_N-p)!^2} \frac{n_N! (n_N-c_k)!}{(n_N-q)!^2} \lvert \Aut(G_k) \rvert \right)^{a_k/2} \\
    &= m_N^{\sum_{k=1}^K a_k(r_k/2-p)} n_N^{\sum_{k=1}^K a_k(c_k/2-q)} \prod_{k=1}^K \frac{a_k !}{2^{a_k/2} (a_k/2)!} \lvert \Aut(G_k) \rvert^{a_k/2} \\
    &\quad + o\left( m_N^{\sum_{k=1}^K a_k(r_k/2-p)} n_N^{\sum_{k=1}^K a_k(c_k/2-q)} \right). \\
\end{split}
\end{equation*}
Each of these $A_N$ terms is equal to $\mathbb{E}\left[\prod_{k=1}^{K} \prod_{\ell=1}^{a_k} p^{\Phi^\ell_k G_{k, \ib^\ell_k, \jb^\ell_k}}\right] = \prod_{k=1}^{K} \mathbb{E}[(p^{G_k})^2]^{a_k/2}$.

In conclusion, if all the $a_k$ are even, then
\begin{equation*}
\begin{split}
    \mathbb{E}\left[\prod_{k=1}^{K} (m_N^{r_k/2} n_N^{c_k/2} \widetilde{P}^{G_k}_{N})^{a_k}\right] &= m_N^{\sum_{k=1}^K a_k r_k/2} \binom{m_N}{p}^{-\sum_{k=1}^K a_k} n_N^{\sum_{k=1}^K a_k c_k/2} \binom{n_N}{q}^{-\sum_{k=1}^K a_k} \\
    &\quad \times A_N \prod_{k=1}^{K} \mathbb{E}[(p^{G_k})^2]^{a_k/2} \\
    &= (p! q!)^{\sum_{k=1}^K a_k} \prod_{k=1}^K \frac{a_k !}{2^{a_k/2} (a_k/2)!} \lvert \Aut(G_k) \rvert^{a_k/2} \mathbb{E}[(p^{G_k})^2]^{a_k/2}  \\
    &= \prod_{k=1}^K \frac{a_k !}{2^{a_k/2} (a_k/2)!} \left(p!^2 q!^2 \lvert \Aut(G_k) \rvert \mathbb{E}[(p^{G_k})^2]\right)^{a_k/2},
\end{split}
\end{equation*}
and in the general case, 
\begin{equation}
\begin{split}
    \mathbb{E}\Big[\prod_{k=1}^{K} (&m_N^{r_k/2} n_N^{c_k/2} \widetilde{P}^{G_k}_{N})^{a_k}\Big] \\
    &= \begin{cases} \prod_{k=1}^K \frac{a_k !}{2^{a_k/2} (a_k/2)!} \left(p!^2 q!^2 \lvert \Aut(G_k) \rvert \mathbb{E}[(p^{G_k})^2]\right)^{a_k/2} & \text{if all $a_k$ are even,} \\ 0 & \text{if at least one $a_k$ is odd.} \end{cases}
\end{split}
    \label{eq:degenerate:lemma_moments}
\end{equation}
Else, if there is at least one odd $a_k$, we have $\mathbb{E}[\prod_{k=1}^{K} (m_N^{r_k/2} n_N^{c_k/2} \widetilde{P}^{G_k}_{N})^{a_k}] = 0$.
    
    We remind that the moment of order $a$ of a Gaussian variable $X$ with mean $0$ and variance $\sigma^2$ is
    \begin{equation*}
        \mathbb{E}[X^a] = \begin{cases} \frac{a!}{2^{a/2} (a/2)!} \sigma^{a} & \text{if $a$ is even,} \\
        0 & \text{if $a$ is odd.} \\
        \end{cases}
    \end{equation*}
So the application of the methods of moments to equation~\eqref{eq:degenerate:lemma_moments} concludes the proof of this lemma.
\end{proof}

\section{Proofs for Section~\ref{sec:degenerate:other_asymptotic_frameworks}}
\label{app:degenerate:proof_other_asymptotic_frameworks}

\subsection{Proof of Theorem~\ref{th:degenerate:principal_part_general}}
\label{app:degenerate:proof_principal_part_general}

In order to prove Theorem~\ref{th:degenerate:principal_part_general}, define $\mathcal{S} = \{(r_\ell,c_\ell) : 1 \le \ell \le L \}$ the set of principal degrees of $h$. We may define $\mathcal{S}_0$ the set of pairs $(0,0) < (r_0,c_0) \le (p,q)$ such that $\gamma(N)^{-1} = o(m_N^{-r_0} n_N^{-c_0})$, for any $(r,c) \in \mathcal{S}$. We may also define $\mathcal{S}_+$, the set of pairs $(0,0) < (r_+,c_+) \le (p,q)$ such that $m_N^{-r_+} n_N^{-c_+} = o(\gamma(N)^{-1})$, for any $(r,c) \in \mathcal{S}$. We need the following lemma.

\begin{lemma}
    For all $(r,c) \in \mathcal{S}_0$, for all graphs $G$ such that $(v_1(G), v_2(G)) = (r,c)$, we have $p^G = 0$.
    \label{lem:degenerate:null_projections}
\end{lemma}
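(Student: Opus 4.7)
The plan is to argue by contradiction through the exact variance formula: if any $G$ with $(v_1(G), v_2(G)) = (r_0, c_0) \in \mathcal{S}_0$ had $p^G \neq 0$, then $\gamma(N)\mathbb{V}[U_N]$ would contain a term that diverges, contradicting the very selection of the principal degrees.

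First I would invoke Proposition~\ref{prop:degenerate:variance_ustat_ep} to expand $\mathbb{V}[U_N]$ exactly as a sum of nonnegative terms scaling like $V^{(r,c)}/(m_N^r n_N^c)$. By the construction of $\gamma(N)$ and the principal degrees, multiplying by $\gamma(N)$ yields, for every principal index $\ell$, a term converging to $\alpha_\ell V^{(r_\ell, c_\ell)}$, and the full sum is bounded. By definition of $\mathcal{S}_0$, any $(r_0,c_0)\in\mathcal{S}_0$ satisfies $\gamma(N)^{-1} = o(m_N^{-r_0} n_N^{-c_0})$, i.e. $\gamma(N)/[(m_N)_{r_0}(n_N)_{c_0}] \to \infty$. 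If $V^{(r_0, c_0)}>0$, the corresponding term in $\gamma(N)\mathbb{V}[U_N]$ would diverge to $+\infty$. Since all the other terms are nonnegative, no cancellation is possible, and this contradicts $\gamma(N)\mathbb{V}[U_N] = \sum_\ell \alpha_\ell V^{(r_\ell, c_\ell)} + o(1)$. Hence $V^{(r_0, c_0)} = 0$.

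Next I would unpack this via the explicit formula from Proposition~\ref{prop:degenerate:variance_ustat_ep},
\begin{equation*}
V^{(r_0, c_0)} = \frac{p!^2 q!^2}{(p-r_0)!^2 (q-c_0)!^2} \sum_{G \in \Gamma_{r_0, c_0}} \lvert \Aut(G) \rvert^{-1}\, \mathbb{E}[(p^G)^2].
\end{equation*}
Each summand is nonnegative and each coefficient is strictly positive, so the vanishing of $V^{(r_0,c_0)}$ forces $\mathbb{E}[(p^G)^2]=0$, hence $p^G = 0$ almost surely, for every representative $G \in \Gamma_{r_0,c_0}$.

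To close the argument, I would lift the conclusion from the representatives in $\Gamma_{r_0,c_0}$ to every graph of the prescribed dimensions. Any $G$ with $(v_1(G), v_2(G)) = (r_0, c_0)$ is isomorphic to a unique $G' \in \Gamma_{r_0, c_0}$ via some $\Phi \in \mathbb{S}_{r_0} \times \mathbb{S}_{c_0}$; relabelling the AHK variables by $\Phi$ is measure-preserving (the families $(\xi_i)$, $(\eta_j)$, $(\zeta_{ij})$ are i.i.d.), so $p^G \overset{\mathcal{D}}{=} p^{G'}$ and therefore $\mathbb{E}[(p^G)^2] = 0$, giving $p^G = 0$ a.s. The only non-routine step is the contradiction at the variance level: the argument rests on the fact that $V^{(r,c)}$ is itself a sum of squared expectations, so positivity precludes any cancellation between different $(r,c)$ blocks — this is the reason the implication goes through cleanly from the asymptotic behaviour of the variance to the pointwise vanishing of the projections.
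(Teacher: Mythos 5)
Your proposal is correct and follows essentially the same route as the paper: expand $\mathbb{V}[U_N]$ via Proposition~\ref{prop:degenerate:variance_ustat_ep}, use the nonnegativity of the $V^{(r,c)}$ together with $\gamma(N)\,m_N^{-r}n_N^{-c}\to\infty$ on $\mathcal{S}_0$ to force $V^{(r,c)}=0$ there, then read off $\mathbb{E}[(p^G)^2]=0$ from the positive-coefficient sum over $\Gamma_{r,c}$ and lift to arbitrary graphs by isomorphism. The only cosmetic difference is that you phrase the key step as a contradiction (a divergent term in a bounded sum) while the paper isolates the $\mathcal{S}_0$ block and shows it is $o(\gamma(N)^{-1})$ directly; the content is identical.
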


\begin{proof}
    We have
\begin{equation*}
\begin{split}
        \mathbb{V}[U_{N}] &= \sum_{(0,0) < (r,c) \le (p,q)} \frac{(m_N-r)!}{m_N!} \frac{(n_N-c)!}{n_N!} V^{(r,c)} \\
        &= \sum_{(r,c) \in \mathcal{S}_0} \frac{(m_N-r)!}{m_N!} \frac{(n_N-c)!}{n_N!} V^{(r,c)} + \sum_{(r,c) \in \mathcal{S}} \frac{(m_N-r)!}{m_N!} \frac{(n_N-c)!}{n_N!} V^{(r,c)} \\
        &\quad + \sum_{(r,c) \in \mathcal{S}_+} \frac{(m_N-r)!}{m_N!} \frac{(n_N-c)!}{n_N!} V^{(r,c)}.
\end{split}
\end{equation*}

By definition, $(r,c) \in \mathcal{S}_+$, $m_N^{-r} n_N^{-c} = o(\gamma(N)^{-1})$ and $$\mathbb{V}[U_N] = \gamma(N)^{-1} \sum_{1 \le \ell \le L} \alpha_\ell V^{(r_\ell, c_\ell)} + o(\gamma(N)^{-1}).$$ 
Therefore, 
\begin{equation*}
\begin{split}
        \sum_{(r,c) \in \mathcal{S}_0} \frac{(m_N-r)!}{m_N!} \frac{(n_N-c)!}{n_N!} V^{(r,c)} &= \sum_{l = 1}^{L} \left(\frac{\alpha_\ell}{\gamma(N)} - \frac{(m_N-r_\ell)!}{m_N!} \frac{(n_N-c_\ell)!}{n_N!}\right) V^{(r_\ell,c_\ell)} + o(\gamma(N)^{-1}).
\end{split}
\end{equation*}

Again, by definition, we have for all $1 \le \ell \le L$, $\gamma(N) \frac{(m_N-r_\ell)!}{m_N!} \frac{(n_N-c_\ell)!}{n_N!} \xrightarrow[N \rightarrow \infty]{} \alpha_\ell$. Therefore, the previous equation yields
\begin{equation*}
\begin{split}
        \gamma(N) \sum_{(r,c) \in \mathcal{S}_0} \frac{(m_N-r)!}{m_N!} \frac{(n_N-c)!}{n_N!} V^{(r,c)} = o(1).
\end{split}
\end{equation*}

But for all $(r,c) \in \mathcal{S}_0$, $\gamma(N) \frac{(m_N-r)!}{m_N!} \frac{(n_N-c)!}{n_N!} \xrightarrow[N \rightarrow \infty]{} \infty$. Since $V^{(r,c)} \ge 0$ for all $(0,0) \le (r,c) \le (p,q)$, this means that for all $(r,c) \in \mathcal{S}_0$, we have $V^{(r,c)} = 0$. Thus, $$V^{(r,c)} = \frac{p!}{(p-r)!} \frac{q!}{(q-r)!} \sum_{G \in \Gamma_{r,c}} \lvert \Aut(G) \rvert^{-1} \mathbb{V}[p^{G}],$$ 
this means $\mathbb{V}[p^{G}] = 0$ for all $G \in \Gamma_{r,c}$.

Finally, let $G$ be any graph such that $(v_1(G), v_2(G)) = (r,c)$. Then there exists a graph $G^* \in \Gamma_{r,c}$ such that $\mathbb{V}[p^G] = \mathbb{V}[p^{G^*}]$. We have already shown that $\mathbb{V}[p^{G^*}]=0$ for all $(r,c) \in \mathcal{S}_0$, so adding the fact that $\mathbb{E}[p^G] = 0$ for all graphs $G \neq \emptyset$, it means that $p^G = 0$, for all graphs $G$ such that $(v_1(G), v_2(G)) = (r,c) \in \mathcal{S}_0$.

\end{proof}

\begin{proof}[Proof of Theorem~\ref{th:degenerate:principal_part_general}]
\begin{equation*}
    \sqrt{\gamma(N)} \left[U_N - p^\emptyset - \sum_{\ell = 1}^L P^{r_\ell,c_\ell}_{N}\right] = \sqrt{\gamma(N)} \left[ \sum_{(r,c) \in \mathcal{S}_0} P^{r,c}_{N} + \sum_{(r,c) \in \mathcal{S}_+} P^{r,c}_{N} \right].
\end{equation*}
By Lemma~\ref{lem:degenerate:null_projections}, $P^{r,c}_{N} = 0$ for all $(r,c) \in \mathcal{S}_0$. 

\begin{equation*}
\begin{split}
    \mathbb{V}\left[\sqrt{\gamma(N)} \sum_{(r,c) \in \mathcal{S}_+} P^{r,c}_{N}\right] &= \gamma(N) \sum_{(r,c) \in \mathcal{S}_+} \frac{(m-r)!}{m!} \frac{(n-c)!}{n!} V^{(r,c)} \\
    &= o(1).
\end{split}
\end{equation*}

That means $\sqrt{\gamma(N)} (U_N - p^\emptyset) = \sqrt{\gamma(N)} \sum_{\ell = 1}^L P^{r_\ell,c_\ell}_{N} + o_P(1)$, which concludes the proof.
\end{proof}

\subsection{Proof of Theorem~\ref{th:degenerate:gaussian_theorem_general}}
\label{app:degenerate:proof_gaussian_theorem_general}

\begin{proof}
Theorem~\ref{th:degenerate:principal_part_general} states that $\sqrt{\gamma(N)} (U_N - p^\emptyset)$ has the same limit as $\sqrt{\gamma(N)} \sum_{\ell = 1}^L P^{r_\ell,c_\ell}_{N}$.
    
For all $(0,0) < (r,c) \le (p,q)$,
\begin{equation*}
     P^{r,c}_{N} = \sum_{G \in \Gamma_{r,c}} \frac{1}{(p-r)! (q-c)! \lvert \Aut(G) \rvert} \widetilde{P}^{G}_{N}.
\end{equation*}

So 
\begin{equation*}
     \sqrt{\gamma(N)} \sum_{\ell = 1}^L P^{r_\ell,c_\ell}_{N} = \sum_{\ell = 1}^L \sqrt{\gamma(N)} m_N^{-r_\ell/2} n_N^{-c_\ell/2} \sum_{G \in \Gamma_{r_\ell,c_\ell}}  \frac{m_N^{r_\ell/2} n_N^{c_\ell/2} \widetilde{P}^{G}_{N}}{(p-r_\ell)! (q-c_\ell)! \lvert \Aut(G) \rvert}.
\end{equation*}

By definition, $\gamma(N) m_N^{-r_\ell} n_N^{-c_\ell} \xrightarrow[N \rightarrow \infty]{} \alpha_\ell$. Therefore, by Lemma~\ref{lem:degenerate:convergence_gaussian_projection}, $\sqrt{\gamma(N)} \sum_{\ell = 1}^L P^{r_\ell,c_\ell}_{N}$ converges in distribution to $Z = \sum_{\ell = 1}^L \sqrt{\alpha_\ell} \sum_{G \in \Gamma_{r_\ell,c_\ell}} W_G$, where all $W_G$ are independent Gaussian variables with mean $0$ and variance $\frac{(p!)^2 (q!)^2}{((p-r_\ell)!)^2 ((q-c_\ell)!)^2 \lvert \Aut(G) \rvert} \mathbb{V}[p^{G}]$.

Finally, it follows that $Z$ is a Gaussian variable with mean $0$ and variance $\sum_{\ell = 1}^L \sqrt{\alpha_\ell} V^{(r_\ell, c_\ell)}$ where 
\begin{equation*}
    V^{(r_\ell, c_\ell)} = \sum_{G \in \Gamma_{r_\ell, c_\ell}} \frac{(p!)^2 (q!)^2}{((p-r_\ell)!)^2 ((q-c_\ell)!)^2 \lvert \Aut(G) \rvert} \mathbb{V}[p^{G}]
\end{equation*}

\end{proof}

\section{Derivation of the variances of Running example~\ref{ex:2}}
\label{app:degenerate:proof_application_ex2}

In this section, we calculate the conditional expectations and the variances of Running example~\ref{ex:2}, investigated in Sections~\ref{sec:degenerate:principal_part} and~\ref{sec:degenerate:gaussian_case}. Let the distribution of $Y$ be defined by 
\begin{align*}
        &\xi_i \overset{i.i.d.}{\sim} \mathcal{U}[0,1], &\forall 1 \le i \le m, \\
        &\eta_j \overset{i.i.d.}{\sim} \mathcal{U}[0,1], &\forall 1 \le j \le n, \\
        &Y_{ij}~\mid~\xi_i, \eta_j \sim \mathcal{P}(\lambda f(\xi_i) g(\eta_j)), &\forall 1 \le i \le m, 1 \le j \le n.
\end{align*}
Let $U_N$ be the $U$-statistic with kernel $h_3 = h_1 - h_2$ where
\begin{equation*}
    h_1(Y_{\{i_1,i_2\},\{j_1,j_2\}}) = \frac{1}{2}(Y_{i_1j_1}Y_{i_1j_2} + Y_{i_2j_1}Y_{i_2j_2}),
\end{equation*}
and
\begin{equation*}
    h_2(Y_{\{i_1,i_2\},\{j_1,j_2\}}) = \frac{1}{2}(Y_{i_1j_1}Y_{i_2j_2} + Y_{i_2j_1}Y_{i_1j_2}).
\end{equation*}

\begin{lemma}
    We have $\mathbb{E}[h_3(Y_{\{1,2\},\{1,2\}}) \mid \xi_1, \xi_2] = \frac{\lambda^2}{2}  (f(\xi_1) - f(\xi_2))^2$.
    \label{lem:degenerate:condexp_1}
\end{lemma}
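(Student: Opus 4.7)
The plan is to expand the kernel, compute each conditional expectation term by term using the tower property conditioning on $(\xi_1,\xi_2,\eta_1,\eta_2)$ first, and then observe that the four terms combine into a perfect square.

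Concretely, by the definition of $h_3=h_1-h_2$,
\begin{equation*}
h_3(Y_{\{1,2\},\{1,2\}})=\tfrac{1}{2}\bigl(Y_{11}Y_{12}+Y_{21}Y_{22}-Y_{11}Y_{22}-Y_{21}Y_{12}\bigr),
\end{equation*}
so by linearity it suffices to compute $\mathbb{E}[Y_{ij}Y_{k\ell}\mid \xi_1,\xi_2]$ for the four pairs appearing above. In the Poisson--BEDD model, conditionally on $(\xi_1,\xi_2,\eta_1,\eta_2)$ the four entries $Y_{11},Y_{12},Y_{21},Y_{22}$ are independent Poisson variables with respective means $\lambda f(\xi_i)g(\eta_j)$, so for any $(i,j)\neq(k,\ell)$,
\begin{equation*}
\mathbb{E}[Y_{ij}Y_{k\ell}\mid \xi_1,\xi_2,\eta_1,\eta_2]=\lambda^2 f(\xi_i)f(\xi_k)g(\eta_j)g(\eta_\ell).
\end{equation*}

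Next I would integrate out $\eta_1,\eta_2$, which are independent uniforms on $[0,1]$ and also independent of the $\xi$'s. The assumption $\int g=1$ gives $\mathbb{E}[g(\eta_j)]=1$ and $\mathbb{E}[g(\eta_j)^2]$ cancels out here because the relevant indices satisfy $j\neq\ell$ in every pair of interest, so each expectation factorizes as $\mathbb{E}[g(\eta_j)]\cdot\mathbb{E}[g(\eta_\ell)]=1$. This yields the four identities
\begin{equation*}
\mathbb{E}[Y_{11}Y_{12}\mid\xi_1,\xi_2]=\lambda^2 f(\xi_1)^2,\qquad \mathbb{E}[Y_{21}Y_{22}\mid\xi_1,\xi_2]=\lambda^2 f(\xi_2)^2,
\end{equation*}
\begin{equation*}
\mathbb{E}[Y_{11}Y_{22}\mid\xi_1,\xi_2]=\mathbb{E}[Y_{21}Y_{12}\mid\xi_1,\xi_2]=\lambda^2 f(\xi_1)f(\xi_2).
\end{equation*}

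Substituting these into the expansion of $h_3$ and using linearity,
\begin{equation*}
\mathbb{E}[h_3(Y_{\{1,2\},\{1,2\}})\mid\xi_1,\xi_2]=\tfrac{\lambda^2}{2}\bigl(f(\xi_1)^2+f(\xi_2)^2-2f(\xi_1)f(\xi_2)\bigr)=\tfrac{\lambda^2}{2}(f(\xi_1)-f(\xi_2))^2,
\end{equation*}
which is the claim. The proof is essentially a bookkeeping exercise; there is no real obstacle beyond carefully tracking which $g(\eta_j)$ factors survive after integration, and the fact that every cross-product in $h_3$ involves two distinct column indices is what makes $\int g=1$ sufficient (no $\int g^2$ enters at this order).
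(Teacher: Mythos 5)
Your proof is correct and follows essentially the same route as the paper: condition on all latent variables $(\boldsymbol{\xi},\boldsymbol{\eta})$ first via the tower property, use the conditional (Poisson) independence of the entries to factorize each cross-product, and integrate out the $\eta$'s using $\int g=1$. The only cosmetic difference is that the paper computes $\mathbb{E}[h_1\mid\xi_1,\xi_2]$ and $\mathbb{E}[h_2\mid\xi_1,\xi_2]$ separately before subtracting, whereas you expand $h_3$ directly; the algebra is identical.
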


\begin{proof}
    We have
    \begin{equation*}
        \begin{split}
            \mathbb{E}[h_1(Y_{\{1,2\},\{1,2\}}) \mid \xi_1, \xi_2] &= \frac{1}{2} \mathbb{E}[Y_{11} Y_{12} + Y_{21} Y_{22} \mid \xi_1, \xi_2] \\
            &= \frac{1}{2} \mathbb{E}[\mathbb{E}[Y_{11} Y_{12} + Y_{21} Y_{22} \mid \boldsymbol{\xi}, \boldsymbol{\eta}] \mid \xi_1, \xi_2] \\
            &= \frac{1}{2} \mathbb{E}[\lambda^2 f(\xi_1)^2 g(\eta_1) g(\eta_2) + \lambda^2 f(\xi_2)^2 g(\eta_1) g(\eta_2) \mid \xi_1, \xi_2] \\
            &= \frac{\lambda^2}{2} (f(\xi_1)^2 + f(\xi_2)^2),
        \end{split}
    \end{equation*}
    and
    \begin{equation*}
        \begin{split}
            \mathbb{E}[h_2(Y_{\{1,2\},\{1,2\}}) \mid \xi_1, \xi_2] &= \frac{1}{2} \mathbb{E}[Y_{11} Y_{22} + Y_{12} Y_{21} \mid \xi_1, \xi_2] \\
            &= \frac{1}{2} \mathbb{E}[\mathbb{E}[Y_{11} Y_{22} + Y_{12} Y_{21} \mid \boldsymbol{\xi}, \boldsymbol{\eta}] \mid \xi_1, \xi_2] \\
            &= \frac{1}{2} \mathbb{E}[ 2 \lambda^2 f(\xi_1) f(\xi_2) g(\eta_1) g(\eta_2) \mid \xi_1, \xi_2] \\
            &= \lambda^2 f(\xi_1) f(\xi_2).
        \end{split}
    \end{equation*}
    This proves the result.
\end{proof}

\begin{lemma}
    We have $\mathbb{E}[h_3(Y_{\{1,2\},\{1,2\}}) \mid \eta_1, \eta_2] = \lambda^2 (F_2 - 1) g(\eta_1) g(\eta_2)$.
\end{lemma}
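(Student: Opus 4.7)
The plan is to follow the same tower-property strategy used in Lemma~\ref{lem:degenerate:condexp_1}, but conditioning now on the column latents $\eta_1,\eta_2$ rather than the row latents. Expand $h_3 = h_1 - h_2$ using their definitions, so that $\mathbb{E}[h_3(Y_{\{1,2\},\{1,2\}})\mid \eta_1,\eta_2]$ splits into a sum of four terms of the form $\mathbb{E}[Y_{ij}Y_{kl}\mid \eta_1,\eta_2]$, and handle each by first conditioning further on the full latent tuple $(\boldsymbol{\xi},\boldsymbol{\eta})$.

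First I would compute $\mathbb{E}[h_1(Y_{\{1,2\},\{1,2\}})\mid \eta_1,\eta_2]$. Dissociation makes the entries conditionally independent given $(\boldsymbol{\xi},\boldsymbol{\eta})$, and the Poisson conditional mean is $\lambda f(\xi_i)g(\eta_j)$, so $\mathbb{E}[Y_{i j_1}Y_{i j_2}\mid \boldsymbol{\xi},\boldsymbol{\eta}] = \lambda^2 f(\xi_i)^2 g(\eta_{j_1})g(\eta_{j_2})$ for each of the two row indices $i\in\{1,2\}$. Integrating out $\xi_i$, which is independent of $\eta_1,\eta_2$ and satisfies $\mathbb{E}[f(\xi_i)^2] = F_2$, yields $\mathbb{E}[h_1\mid\eta_1,\eta_2] = \lambda^2 F_2\, g(\eta_1)g(\eta_2)$.

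Next I would treat $h_2$ the same way. Here the two factors in each product live on distinct rows, so conditional independence gives $\mathbb{E}[Y_{i_1 j_1}Y_{i_2 j_2}\mid\boldsymbol{\xi},\boldsymbol{\eta}] = \lambda^2 f(\xi_{i_1})f(\xi_{i_2}) g(\eta_{j_1})g(\eta_{j_2})$ with $i_1\neq i_2$. Because $\xi_1$ and $\xi_2$ are independent with $\int f = 1$, the expectation of $f(\xi_1)f(\xi_2)$ collapses to $1$, giving $\mathbb{E}[h_2\mid\eta_1,\eta_2] = \lambda^2 g(\eta_1)g(\eta_2)$. Subtracting the two expressions produces exactly $\lambda^2 (F_2-1)g(\eta_1)g(\eta_2)$.

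There is no real obstacle: the argument is a direct symmetric analogue of Lemma~\ref{lem:degenerate:condexp_1}, the only care needed being the bookkeeping of which pair of $Y$'s shares a row (contributing $F_2$) versus distinct rows (contributing $1$). This asymmetry between the roles of rows and columns in $h_3$ is precisely what makes the conditional expectation factor through $g$ only, which is the structural reason the final expression is nonzero in general but vanishes in the degenerate direction $f\equiv 1$ only through $h_1 - h_2$ balancing differently on the $\xi$-side (cf.\ Lemma~\ref{lem:degenerate:condexp_1}).
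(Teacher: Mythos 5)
Your proof is correct and follows essentially the same route as the paper: condition on the full latent tuple $(\boldsymbol{\xi},\boldsymbol{\eta})$ via the tower property, use the conditional independence of the entries and the Poisson conditional means to get $\lambda^2 F_2\, g(\eta_1)g(\eta_2)$ for $h_1$ and $\lambda^2 g(\eta_1)g(\eta_2)$ for $h_2$, and subtract. The only cosmetic quibble is that the conditional independence of the $Y_{ij}$ given the latents comes from the model specification (the i.i.d.\ edge variables $\zeta_{ij}$ in the AHK representation) rather than from dissociation per se, but this does not affect the argument.
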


\begin{proof}
    We have
    \begin{equation*}
        \begin{split}
            \mathbb{E}[h_1(Y_{\{1,2\},\{1,2\}}) \mid \eta_1, \eta_2] &= \frac{1}{2} \mathbb{E}[Y_{11} Y_{12} + Y_{21} Y_{22} \mid \eta_1, \eta_2] \\
            &= \frac{1}{2} \mathbb{E}[\mathbb{E}[Y_{11} Y_{12} + Y_{21} Y_{22} \mid \boldsymbol{\xi}, \boldsymbol{\eta}] \mid \eta_1, \eta_2] \\
            &= \frac{1}{2} \mathbb{E}[\lambda^2 f(\xi_1)^2 g(\eta_1) g(\eta_2) + \lambda^2 f(\xi_2)^2 g(\eta_1) g(\eta_2) \mid \eta_1, \eta_2] \\
            &= \lambda^2 F_2 g(\eta_1) g(\eta_2),
        \end{split}
    \end{equation*}
    and
    \begin{equation*}
        \begin{split}
            \mathbb{E}[h_2(Y_{\{1,2\},\{1,2\}}) \mid \eta_1, \eta_2] &= \frac{1}{2} \mathbb{E}[Y_{11} Y_{22} + Y_{12} Y_{21} \mid \eta_1, \eta_2] \\
            &= \frac{1}{2} \mathbb{E}[\mathbb{E}[Y_{11} Y_{22} + Y_{12} Y_{21} \mid \boldsymbol{\xi}, \boldsymbol{\eta}] \mid \eta_1, \eta_2] \\
            &= \frac{1}{2} \mathbb{E}[ 2 \lambda^2 f(\xi_1) f(\xi_2) g(\eta_1) g(\eta_2) \mid \eta_1, \eta_2] \\
            &= \lambda^2 g(\eta_1) g(\eta_2).
        \end{split}
    \end{equation*}
    This proves the result.
\end{proof}

\begin{lemma}
    We have $\mathbb{E}[h_3(Y_{\{1,2\},\{1,2\}}) \mid \xi_1, \eta_1] = \frac{\lambda^2}{2}  (f(\xi_1)^2 - 2 f(\xi_1) + F_2) g(\eta_1)$.
\end{lemma}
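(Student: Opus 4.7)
My plan is to mirror the computation strategy used in Lemmas~\ref{lem:degenerate:condexp_1} and the following one, namely: treat $h_1$ and $h_2$ separately, compute each of the four pairwise products $\mathbb{E}[Y_{ij} Y_{i'j'} \mid \xi_1, \eta_1]$ appearing in them by first conditioning on the full latent vector $(\boldsymbol{\xi},\boldsymbol{\eta})$, and then use the independence structure of the Poisson-BEDD model together with $\int f = \int g = 1$ and $\int f^2 = F_2$ to integrate out $\xi_2, \eta_2$. Here $(i,j) \ne (i',j')$ in all pairs that appear, so the conditional independence of $Y_{ij}$ and $Y_{i'j'}$ given $(\boldsymbol{\xi},\boldsymbol{\eta})$ makes $\mathbb{E}[Y_{ij} Y_{i'j'} \mid \boldsymbol{\xi}, \boldsymbol{\eta}] = \lambda^2 f(\xi_i) f(\xi_{i'}) g(\eta_j) g(\eta_{j'})$.

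For $h_1 = \tfrac{1}{2}(Y_{11} Y_{12} + Y_{21} Y_{22})$, the first product $Y_{11} Y_{12}$ involves only row $1$ (and columns $1,2$), so after conditioning on $\boldsymbol{\xi}, \boldsymbol{\eta}$ and integrating out $\eta_2$, I get $\lambda^2 f(\xi_1)^2 g(\eta_1)$. The second product $Y_{21} Y_{22}$ involves only row $2$ (and columns $1,2$), so after integrating out $\xi_2$ and $\eta_2$, I get $\lambda^2 F_2\, g(\eta_1)$. Summing, $\mathbb{E}[h_1 \mid \xi_1,\eta_1] = \tfrac{\lambda^2}{2}\bigl(f(\xi_1)^2 + F_2\bigr) g(\eta_1)$. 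For $h_2 = \tfrac{1}{2}(Y_{11} Y_{22} + Y_{12} Y_{21})$, both products couple row~$1$ with row~$2$ and column~$1$ with column~$2$, so integrating out $\xi_2, \eta_2$ yields $\lambda^2 f(\xi_1) g(\eta_1)$ in each case, giving $\mathbb{E}[h_2 \mid \xi_1,\eta_1] = \lambda^2 f(\xi_1) g(\eta_1)$.

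Subtracting, $\mathbb{E}[h_3 \mid \xi_1, \eta_1] = \tfrac{\lambda^2}{2}\bigl(f(\xi_1)^2 + F_2 - 2 f(\xi_1)\bigr) g(\eta_1)$, which matches the claimed expression. There is no real obstacle here: the only care needed is bookkeeping of which latent variables are shared with the conditioning event (only $\xi_1$ and $\eta_1$), so that the integrated-out factors correctly collapse to $1$ or to $F_2$. The same template will serve for Lemma~\ref{lem:degenerate:condexp_2} and for the three-node conditional expectations of Lemmas~\ref{lem:degenerate:condexp_3}--\ref{lem:degenerate:condexp_4}, with the only additional subtlety arising when two indices of a product coincide (e.g.\ $Y_{ij}^2$ terms appear once we condition on an edge variable $\zeta_{ij}$), in which case the Poisson second moment $\lambda f(\xi_i) g(\eta_j) + \lambda^2 f(\xi_i)^2 g(\eta_j)^2$ must be used in place of the product of means.
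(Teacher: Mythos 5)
Your proposal is correct and follows essentially the same route as the paper's proof: condition on the full latent vector $(\boldsymbol{\xi},\boldsymbol{\eta})$ via the tower property, use the conditional independence of distinct entries to factor each product as $\lambda^2 f(\xi_i)f(\xi_{i'})g(\eta_j)g(\eta_{j'})$, then integrate out $\xi_2$ and $\eta_2$ using $\int f=\int g=1$ and $\int f^2=F_2$. The bookkeeping and the resulting expressions for $\mathbb{E}[h_1\mid\xi_1,\eta_1]$ and $\mathbb{E}[h_2\mid\xi_1,\eta_1]$ match the paper exactly.
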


\begin{proof}
    We have
    \begin{equation*}
        \begin{split}
            \mathbb{E}[h_1(Y_{\{1,2\},\{1,2\}}) \mid \xi_1, \eta_1] &= \frac{1}{2} \mathbb{E}[Y_{11} Y_{12} + Y_{21} Y_{22} \mid \xi_1, \eta_1] \\
            &= \frac{1}{2} \mathbb{E}[\mathbb{E}[Y_{11} Y_{12} + Y_{21} Y_{22} \mid \boldsymbol{\xi}, \boldsymbol{\eta}] \mid \xi_1, \eta_1] \\
            &= \frac{1}{2} \mathbb{E}[\lambda^2 f(\xi_1)^2 g(\eta_1) g(\eta_2) + \lambda^2 f(\xi_2)^2 g(\eta_1) g(\eta_2) \mid \xi_1, \eta_1] \\
            &= \frac{\lambda^2}{2} (f(\xi_1)^2 + F_2) g(\eta_1),
        \end{split}
    \end{equation*}
    and
    \begin{equation*}
        \begin{split}
            \mathbb{E}[h_2(Y_{\{1,2\},\{1,2\}}) \mid \xi_1, \eta_1] &= \frac{1}{2} \mathbb{E}[Y_{11} Y_{22} + Y_{12} Y_{21} \mid \xi_1, \eta_1] \\
            &= \frac{1}{2} \mathbb{E}[\mathbb{E}[Y_{11} Y_{22} + Y_{12} Y_{21} \mid \boldsymbol{\xi}, \boldsymbol{\eta}] \mid \xi_1, \eta_1] \\
            &= \frac{1}{2} \mathbb{E}[ 2 \lambda^2 f(\xi_1) f(\xi_2) g(\eta_1) g(\eta_2) \mid \xi_1, \eta_1] \\
            &= \lambda^2 f(\xi_1) g(\eta_1).
        \end{split}
    \end{equation*}
    This proves the result.
\end{proof}

\begin{lemma}
    We have $\mathbb{E}[h_3(Y_{\{1,2\},\{1,2\}}) \mid \xi_1, \eta_1, \zeta_{11}] = \frac{\lambda}{2} (f(\xi_1) - 1) Y_{11} + \frac{\lambda^2}{2} (F_2 - f(\xi_1)) g(\eta_1)$.
    \label{lem:degenerate:condexp_2}
\end{lemma}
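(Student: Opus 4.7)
The plan is to expand $h_3 = h_1 - h_2$ into its four bilinear summands
\[
2\, h_3(Y_{\{1,2\},\{1,2\}}) = Y_{11}Y_{12} + Y_{21}Y_{22} - Y_{11}Y_{22} - Y_{21}Y_{12},
\]
and to compute the conditional expectation of each summand separately, following exactly the template of Lemmas~\ref{lem:degenerate:condexp_1}--\ref{lem:degenerate:condexp_2}. The novelty compared with those earlier lemmas is that the conditioning $\sigma$-algebra now contains $\zeta_{11}$, so via the AHK representation $Y_{11} = \phi(\xi_1,\eta_1,\zeta_{11})$ is itself measurable with respect to it and can be factored out whenever it appears.

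I would carry out the calculations in the following order. First, by the tower property, condition on $(\boldsymbol{\xi},\boldsymbol{\eta},\zeta_{11})$ so that conditionally independent Poisson entries factor with $\mathbb{E}[Y_{ij}\mid\boldsymbol{\xi},\boldsymbol{\eta}] = \lambda f(\xi_i)g(\eta_j)$; then average over the remaining latent variables $\xi_2,\eta_2$ using their independence from $(\xi_1,\eta_1,\zeta_{11})$, the normalization $\int f = \int g = 1$, and the definition $F_2 = \int f^2$. This yields
\begin{align*}
\mathbb{E}[Y_{11}Y_{12}\mid \xi_1,\eta_1,\zeta_{11}] &= Y_{11}\,\mathbb{E}[Y_{12}\mid \xi_1] = \lambda f(\xi_1)\, Y_{11},\\
\mathbb{E}[Y_{21}Y_{22}\mid \xi_1,\eta_1,\zeta_{11}] &= \mathbb{E}\!\left[\lambda^2 f(\xi_2)^2 g(\eta_1)g(\eta_2)\,\middle|\,\eta_1\right] = \lambda^2 F_2\, g(\eta_1),\\
\mathbb{E}[Y_{11}Y_{22}\mid \xi_1,\eta_1,\zeta_{11}] &= Y_{11}\,\mathbb{E}[Y_{22}] = \lambda\, Y_{11},\\
\mathbb{E}[Y_{21}Y_{12}\mid \xi_1,\eta_1,\zeta_{11}] &= \mathbb{E}[Y_{21}\mid \eta_1]\,\mathbb{E}[Y_{12}\mid \xi_1] = \lambda^2 f(\xi_1)g(\eta_1),
\end{align*}
where in the last line I also used that $Y_{21}$ and $Y_{12}$ share no AHK variable with one another nor with the conditioning triple apart from $\xi_1$ and $\eta_1$ separately, hence they are conditionally independent given $(\xi_1,\eta_1,\zeta_{11})$.

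Combining the four identities with the sign pattern of $h_3$ and dividing by $2$ produces
\[
\mathbb{E}[h_3(Y_{\{1,2\},\{1,2\}})\mid \xi_1,\eta_1,\zeta_{11}] = \tfrac{\lambda}{2}(f(\xi_1)-1)\,Y_{11} + \tfrac{\lambda^2}{2}(F_2 - f(\xi_1))\, g(\eta_1),
\]
which is the claimed formula. There is no substantive obstacle: the argument reduces to the tower property together with the independence between $(\xi_1,\eta_1,\zeta_{11})$ and the remaining AHK variables $\xi_2,\eta_2,\zeta_{12},\zeta_{21},\zeta_{22}$. The only point requiring mild care is the bookkeeping of which latents belong to the conditioning set for each product, and to remember that $Y_{11}$ is a measurable constant under this conditioning while the other three entries $Y_{12},Y_{21},Y_{22}$ must be integrated out.
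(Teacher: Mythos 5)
Your proposal is correct and follows essentially the same route as the paper: expand $2h_3$ into the four bilinear terms, apply the tower property by first conditioning on all latents (where the Poisson entries factor with conditional mean $\lambda f(\xi_i)g(\eta_j)$), and then integrate out $\xi_2,\eta_2$ using $\int f=\int g=1$ and $F_2=\int f^2$. The four intermediate conditional expectations and the final combination match the paper's computation exactly.
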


\begin{proof}
    We have
    \begin{equation*}
        \begin{split}
            \mathbb{E}[h_1(Y_{\{1,2\},\{1,2\}}) \mid \xi_1, \eta_1, \zeta_{11}] &= \frac{1}{2} \mathbb{E}[Y_{11} Y_{12} + Y_{21} Y_{22} \mid \xi_1, \eta_1, \zeta_{11}] \\
            &= \frac{1}{2} \mathbb{E}[\mathbb{E}[Y_{11} Y_{12} + Y_{21} Y_{22} \mid \boldsymbol{\xi}, \boldsymbol{\eta}, Y_{11}] \mid \xi_1, \eta_1, \zeta_{11}] \\
            &= \frac{1}{2} \mathbb{E}[\lambda f(\xi_1) g(\eta_2) Y_{11} + \lambda^2 f(\xi_2)^2 g(\eta_1) g(\eta_2) \mid \xi_1, \eta_1, \zeta_{11}] \\
            &= \frac{\lambda}{2} f(\xi_1) Y_{11}  + \frac{\lambda^2}{2}  F_2 g(\eta_1),
        \end{split}
    \end{equation*}
    and
    \begin{equation*}
        \begin{split}
            \mathbb{E}[h_2(Y_{\{1,2\},\{1,2\}}) \mid \xi_1, \xi_2, \zeta_{11}] &= \frac{1}{2} \mathbb{E}[Y_{11} Y_{22} + Y_{12} Y_{21} \mid \xi_1, \eta_1, \zeta_{11}] \\
            &= \frac{1}{2} \mathbb{E}[\mathbb{E}[Y_{11} Y_{22} + Y_{12} Y_{21} \mid \boldsymbol{\xi}, \boldsymbol{\eta}, Y_{11}] \mid \xi_1, \eta_1, \zeta_{11}] \\
            &= \frac{1}{2} \mathbb{E}[ \lambda Y_{11} f(\xi_2) g(\eta_2) + \lambda^2 f(\xi_1) f(\xi_2) g(\eta_1) g(\eta_2) \mid \xi_1, \eta_1, \zeta_{11}] \\
            &= \frac{\lambda}{2} Y_{11} + \frac{\lambda^2}{2} f(\xi_1) g(\eta_1).
        \end{split}
    \end{equation*}
    This proves the result.
\end{proof}

\begin{lemma}
    We have $\mathbb{E}[h_3(Y_{\{1,2\},\{1,2\}}) \mid \xi_1, \xi_2, \eta_1] = \frac{\lambda^2}{2} (f(\xi_1) - f(\xi_2))^2 g(\eta_1)$.
    \label{lem:degenerate:condexp_3}
\end{lemma}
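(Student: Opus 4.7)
The plan is to mirror the computations of Lemmas~\ref{lem:degenerate:condexp_1}--\ref{lem:degenerate:condexp_2}: condition first on the full latent vector $(\boldsymbol{\xi},\boldsymbol{\eta})$ to exploit the Poisson-BEDD structure, which makes the four entries $Y_{11},Y_{12},Y_{21},Y_{22}$ conditionally independent, and then integrate out the only latent variable not in the conditioning set, namely $\eta_2$. Since $h_3 = h_1 - h_2$, I would treat the two kernels separately and combine at the end.

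First, for $h_1$, use that given $(\boldsymbol{\xi},\boldsymbol{\eta})$, $\mathbb{E}[Y_{1j_1}Y_{1j_2}] = \lambda^2 f(\xi_1)^2 g(\eta_{j_1}) g(\eta_{j_2})$ and similarly for row $2$. Then tower-rule down to $(\xi_1,\xi_2,\eta_1)$ and use $\int g = 1$ to absorb $\eta_2$:
\begin{equation*}
\mathbb{E}[h_1(Y_{\{1,2\},\{1,2\}}) \mid \xi_1,\xi_2,\eta_1]
= \tfrac{\lambda^2}{2}\bigl(f(\xi_1)^2 + f(\xi_2)^2\bigr) g(\eta_1).
\end{equation*}
For $h_2$, the same procedure gives $\mathbb{E}[Y_{11}Y_{22}\mid \boldsymbol{\xi},\boldsymbol{\eta}] = \lambda^2 f(\xi_1)f(\xi_2) g(\eta_1) g(\eta_2)$ and symmetrically for $Y_{12}Y_{21}$, hence
\begin{equation*}
\mathbb{E}[h_2(Y_{\{1,2\},\{1,2\}}) \mid \xi_1,\xi_2,\eta_1]
= \lambda^2 f(\xi_1) f(\xi_2) g(\eta_1).
\end{equation*}

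Finally, subtract the two expressions and recognize the resulting expression as a perfect square:
\begin{equation*}
\tfrac{\lambda^2}{2}\bigl(f(\xi_1)^2 + f(\xi_2)^2 - 2 f(\xi_1) f(\xi_2)\bigr) g(\eta_1)
= \tfrac{\lambda^2}{2} (f(\xi_1)-f(\xi_2))^2 g(\eta_1),
\end{equation*}
which is exactly the stated formula. There is no real obstacle here: the argument is a routine conditional-expectation computation of the same flavor as Lemmas~\ref{lem:degenerate:condexp_1}--\ref{lem:degenerate:condexp_2}, and the only mild point of care is to integrate out $\eta_2$ using $\int g = 1$ and to keep the factor $g(\eta_1)$ since $\eta_1$ remains in the conditioning.
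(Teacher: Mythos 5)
Your proposal is correct and follows essentially the same route as the paper's proof: condition on $(\boldsymbol{\xi},\boldsymbol{\eta})$ to get $\mathbb{E}[Y_{11}Y_{12}+Y_{21}Y_{22}\mid\boldsymbol{\xi},\boldsymbol{\eta}]=\lambda^2(f(\xi_1)^2+f(\xi_2)^2)g(\eta_1)g(\eta_2)$ and $\mathbb{E}[Y_{11}Y_{22}+Y_{12}Y_{21}\mid\boldsymbol{\xi},\boldsymbol{\eta}]=2\lambda^2 f(\xi_1)f(\xi_2)g(\eta_1)g(\eta_2)$, integrate out $\eta_2$ via $\int g=1$, and subtract to obtain the perfect square. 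No gaps.
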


\begin{proof}
    We have
    \begin{equation*}
        \begin{split}
            \mathbb{E}[h_1(Y_{\{1,2\},\{1,2\}}) \mid \xi_1, \xi_2, \eta_1] &= \frac{1}{2} \mathbb{E}[Y_{11} Y_{12} + Y_{21} Y_{22} \mid \xi_1, \xi_2, \eta_1] \\
            &= \frac{1}{2} \mathbb{E}[\mathbb{E}[Y_{11} Y_{12} + Y_{21} Y_{22} \mid \boldsymbol{\xi}, \boldsymbol{\eta}] \mid \xi_1, \xi_2, \eta_1] \\
            &= \frac{1}{2} \mathbb{E}[\lambda^2 f(\xi_1)^2 g(\eta_1) g(\eta_2) + \lambda^2 f(\xi_2)^2 g(\eta_1) g(\eta_2) \mid \xi_1, \xi_2, \eta_1] \\
            &= \frac{\lambda^2}{2} (f(\xi_1)^2 + f(\xi_2)^2) g(\eta_1),
        \end{split}
    \end{equation*}
    and
    \begin{equation*}
        \begin{split}
            \mathbb{E}[h_2(Y_{\{1,2\},\{1,2\}}) \mid \xi_1, \xi_2, \eta_1] &= \frac{1}{2} \mathbb{E}[Y_{11} Y_{22} + Y_{12} Y_{21} \mid \xi_1, \xi_2, \eta_1] \\
            &= \frac{1}{2} \mathbb{E}[\mathbb{E}[Y_{11} Y_{22} + Y_{12} Y_{21} \mid \boldsymbol{\xi}, \boldsymbol{\eta}] \mid \xi_1, \xi_2, \eta_1] \\
            &= \frac{1}{2} \mathbb{E}[ 2 \lambda^2 f(\xi_1) f(\xi_2) g(\eta_1) g(\eta_2) \mid \xi_1, \xi_2, \eta_1] \\
            &= \lambda^2 f(\xi_1) f(\xi_2) g(\eta_1).
        \end{split}
    \end{equation*}
    This proves the result.
\end{proof}

\begin{lemma}
    We have $\mathbb{E}[h_3(Y_{\{1,2\},\{1,2\}}) \mid \xi_1, \xi_2, \eta_1, \zeta_{11}] = \frac{\lambda}{2} (f(\xi_1) - f(\xi_2)) Y_{11} + \frac{\lambda^2}{2} (f(\xi_2) - f(\xi_1)) f(\xi_2) g(\eta_1)$.
\end{lemma}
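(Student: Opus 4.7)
The plan is to compute $\mathbb{E}[h_1(Y_{\{1,2\},\{1,2\}})\mid \xi_1,\xi_2,\eta_1,\zeta_{11}]$ and $\mathbb{E}[h_2(Y_{\{1,2\},\{1,2\}})\mid \xi_1,\xi_2,\eta_1,\zeta_{11}]$ separately, then subtract, following the pattern of Lemmas~\ref{lem:degenerate:condexp_1}--\ref{lem:degenerate:condexp_3}. The only new ingredient compared to those lemmas is that $\zeta_{11}$ appears in the conditioning, so $Y_{11}=\phi(\xi_1,\eta_1,\zeta_{11})$ is now measurable and must be pulled out of the conditional expectation as a factor, while the other three entries $Y_{12},Y_{21},Y_{22}$ remain partially integrated.

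First, I would list the AHK indices carried by each entry: $Y_{11}$ depends on $(\xi_1,\eta_1,\zeta_{11})$, $Y_{12}$ on $(\xi_1,\eta_2,\zeta_{12})$, $Y_{21}$ on $(\xi_2,\eta_1,\zeta_{21})$, and $Y_{22}$ on $(\xi_2,\eta_2,\zeta_{22})$. Because the AHK variables are independent, conditionally on $(\xi_1,\xi_2,\eta_1,\zeta_{11})$, the unobserved variables $\eta_2,\zeta_{12},\zeta_{21},\zeta_{22}$ remain i.i.d.\ $\mathcal U[0,1]$, which gives two elementary identities: $\mathbb{E}[Y_{ab}\mid \boldsymbol\xi,\boldsymbol\eta]=\lambda f(\xi_a)g(\eta_b)$ and, for any $(a,b)\neq(1,1)$, $\mathbb{E}[Y_{ab}\mid \xi_1,\xi_2,\eta_1,\zeta_{11}]=\lambda f(\xi_a)\,\mathbb{E}[g(\eta_b)\mid\eta_1]$ (which equals $\lambda f(\xi_a)g(\eta_1)$ if $b=1$ and $\lambda f(\xi_a)$ if $b=2$, using $\int g = 1$).

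Next, I would treat each of the four product terms by tower-conditioning on $(\boldsymbol\xi,\boldsymbol\eta,Y_{11})$ and exploiting the conditional independence of the entries given $(\boldsymbol\xi,\boldsymbol\eta)$: $Y_{11}Y_{12}$ yields $Y_{11}\cdot \lambda f(\xi_1)$; $Y_{21}Y_{22}$ yields $\lambda^2 f(\xi_2)^2 g(\eta_1)$; $Y_{11}Y_{22}$ yields $Y_{11}\cdot \lambda f(\xi_2)$; and $Y_{12}Y_{21}$ yields $\lambda^2 f(\xi_1)f(\xi_2)g(\eta_1)$. Summing gives
\[
\mathbb{E}[h_1\mid \xi_1,\xi_2,\eta_1,\zeta_{11}] = \tfrac{\lambda}{2} f(\xi_1)\,Y_{11} + \tfrac{\lambda^2}{2} f(\xi_2)^2\,g(\eta_1),
\]
\[
\mathbb{E}[h_2\mid \xi_1,\xi_2,\eta_1,\zeta_{11}] = \tfrac{\lambda}{2} f(\xi_2)\,Y_{11} + \tfrac{\lambda^2}{2} f(\xi_1)f(\xi_2)\,g(\eta_1).
\]

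Finally, subtracting the two expressions yields
\[
\mathbb{E}[h_3\mid \xi_1,\xi_2,\eta_1,\zeta_{11}] = \tfrac{\lambda}{2}\bigl(f(\xi_1)-f(\xi_2)\bigr)\,Y_{11} + \tfrac{\lambda^2}{2}\bigl(f(\xi_2)-f(\xi_1)\bigr)f(\xi_2)\,g(\eta_1),
\]
which is the claimed formula. There is no real obstacle here; the only point requiring care is making sure that in the $Y_{11}Y_{12}$ term one pulls $Y_{11}$ out as a factor (because it is conditioning-measurable) rather than integrating it against its conditional mean, which would double-count its contribution and is the subtle difference with Lemma~\ref{lem:degenerate:condexp_3}.
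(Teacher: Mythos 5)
Your proposal is correct and follows essentially the same route as the paper's proof: condition through $(\boldsymbol\xi,\boldsymbol\eta,Y_{11})$ by the tower property, pull out the measurable factor $Y_{11}$, integrate out $\eta_2$ and the remaining edge noises using $\int g = 1$, and subtract the resulting expressions for $h_1$ and $h_2$; your intermediate formulas for $\mathbb{E}[h_1\mid\cdot]$ and $\mathbb{E}[h_2\mid\cdot]$ match the paper's exactly.
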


\begin{proof}
    We have
    \begin{equation*}
        \begin{split}
            \mathbb{E}[h_1(Y_{\{1,2\},\{1,2\}}) \mid \xi_1, \xi_2, \eta_1, \zeta_{11}] &= \frac{1}{2} \mathbb{E}[Y_{11} Y_{12} + Y_{21} Y_{22} \mid \xi_1, \xi_2, \eta_1, \zeta_{11}] \\
            &= \frac{1}{2} \mathbb{E}[\mathbb{E}[Y_{11} Y_{12} + Y_{21} Y_{22} \mid \boldsymbol{\xi}, \boldsymbol{\eta}, Y_{11}] \mid \xi_1, \xi_2, \eta_1, \zeta_{11}] \\
            &= \frac{1}{2} \mathbb{E}[\lambda f(\xi_1) g(\eta_2) Y_{11} + \lambda^2 f(\xi_2)^2 g(\eta_1) g(\eta_2) \mid \xi_1, \xi_2, \eta_1, \zeta_{11}] \\
            &= \frac{\lambda}{2} f(\xi_1) Y_{11} + \frac{\lambda^2}{2} f(\xi_2)^2 g(\eta_1),
        \end{split}
    \end{equation*}
    and
    \begin{equation*}
        \begin{split}
            \mathbb{E}[h_2(Y_{\{1,2\},\{1,2\}}) \mid \xi_1, \xi_2, \eta_1, \zeta_{11}] &= \frac{1}{2} \mathbb{E}[Y_{11} Y_{22} + Y_{12} Y_{21} \mid \xi_1, \xi_2, \eta_1, \zeta_{11}] \\
            &= \frac{1}{2} \mathbb{E}[\mathbb{E}[Y_{11} Y_{22} + Y_{12} Y_{21} \mid \boldsymbol{\xi}, \boldsymbol{\eta}, Y_{11}] \mid \xi_1, \xi_2, \eta_1, \zeta_{11}] \\
            &= \frac{1}{2} \mathbb{E}[ \lambda f(\xi_2) g(\eta_2) Y_{11} + \lambda^2 f(\xi_1) f(\xi_2) g(\eta_1) g(\eta_2) \mid \xi_1, \xi_2, \eta_1, \zeta_{11}] \\
            &= \frac{\lambda}{2} f(\xi_2) Y_{11} + \frac{\lambda^2}{2} f(\xi_1) f(\xi_2) g(\eta_1).
        \end{split}
    \end{equation*}
    This proves the result.
\end{proof}

\begin{lemma}
    We have $\mathbb{E}[h_3(Y_{\{1,2\},\{1,2\}}) \mid \xi_1, \xi_2, \eta_1, \zeta_{11}, \zeta_{21}] = \frac{\lambda}{2} (f(\xi_1) - f(\xi_2)) (Y_{11} - Y_{21})$.
\end{lemma}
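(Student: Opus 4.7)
The plan is to proceed by direct computation, just as in the four lemmas immediately preceding this one. Since $h_3=h_1-h_2$, it suffices to evaluate the conditional expectations of the four bilinear monomials $Y_{11}Y_{12}$, $Y_{21}Y_{22}$, $Y_{11}Y_{22}$ and $Y_{12}Y_{21}$ given the $\sigma$-algebra $\mathcal{F}:=\sigma(\xi_1,\xi_2,\eta_1,\zeta_{11},\zeta_{21})$, and then combine them via the definitions of $h_1$ and $h_2$.

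The key observation is that conditioning on $\mathcal{F}$ has a clean effect on each entry. By the AHK representation of the Poisson-BEDD model, $Y_{11}$ is $\sigma(\xi_1,\eta_1,\zeta_{11})$-measurable and $Y_{21}$ is $\sigma(\xi_2,\eta_1,\zeta_{21})$-measurable, so both are $\mathcal{F}$-measurable and can be pulled out of the conditional expectation. On the other hand, $Y_{12}$ depends on $(\xi_1,\eta_2,\zeta_{12})$ and $Y_{22}$ depends on $(\xi_2,\eta_2,\zeta_{22})$, and the variables $\eta_2,\zeta_{12},\zeta_{22}$ are independent of $\mathcal{F}$. Conditioning first on $(\boldsymbol{\xi},\boldsymbol{\eta})$ and using $Y_{ij}\mid\boldsymbol{\xi},\boldsymbol{\eta}\sim\mathcal{P}(\lambda f(\xi_i)g(\eta_j))$, followed by $\mathbb{E}[g(\eta_2)]=\int g=1$, gives $\mathbb{E}[Y_{12}\mid\mathcal{F}]=\lambda f(\xi_1)$ and $\mathbb{E}[Y_{22}\mid\mathcal{F}]=\lambda f(\xi_2)$.

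Combining these with the $\mathcal{F}$-measurability of $Y_{11},Y_{21}$, I would obtain
\begin{equation*}
\mathbb{E}[h_1(Y_{\{1,2\},\{1,2\}})\mid\mathcal{F}]=\tfrac{\lambda}{2}\bigl(f(\xi_1)Y_{11}+f(\xi_2)Y_{21}\bigr),
\end{equation*}
and, by the same mechanism applied to the cross terms,
\begin{equation*}
\mathbb{E}[h_2(Y_{\{1,2\},\{1,2\}})\mid\mathcal{F}]=\tfrac{\lambda}{2}\bigl(f(\xi_2)Y_{11}+f(\xi_1)Y_{21}\bigr).
\end{equation*}
Subtracting yields the claimed identity $\tfrac{\lambda}{2}(f(\xi_1)-f(\xi_2))(Y_{11}-Y_{21})$.

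There is no real obstacle here; the proof is a routine bookkeeping exercise entirely analogous to Lemmas~\ref{lem:degenerate:condexp_1}--\ref{lem:degenerate:condexp_4}. The only point that requires some care is keeping track of which of the four entries are fixed by the conditioning (the two sharing column $1$) and which remain random with mean proportional to $\lambda f(\xi_i)$; the column index $j=2$ integrates out cleanly because $\int g=1$, which is precisely what makes the mean of $Y_{1,2}$ (resp.\ $Y_{2,2}$) collapse to $\lambda f(\xi_1)$ (resp.\ $\lambda f(\xi_2)$).
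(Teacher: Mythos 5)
Your proposal is correct and follows essentially the same route as the paper's proof: condition the two entries in column 2 on $(\boldsymbol{\xi},\boldsymbol{\eta})$ (equivalently, pull out the $\mathcal{F}$-measurable $Y_{11},Y_{21}$ and integrate $\eta_2$ using $\int g=1$) to get $\mathbb{E}[h_1\mid\mathcal{F}]=\tfrac{\lambda}{2}(f(\xi_1)Y_{11}+f(\xi_2)Y_{21})$ and $\mathbb{E}[h_2\mid\mathcal{F}]=\tfrac{\lambda}{2}(f(\xi_2)Y_{11}+f(\xi_1)Y_{21})$, then subtract. The intermediate expressions and the final factorization match the paper exactly.
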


\begin{proof}
    We have
    \begin{equation*}
        \begin{split}
            \mathbb{E}[h_1(Y_{\{1,2\},\{1,2\}}) \mid \xi_1, \xi_2, \eta_1, \zeta_{11}, \zeta_{21}] &= \frac{1}{2} \mathbb{E}[Y_{11} Y_{12} + Y_{21} Y_{22} \mid \xi_1, \xi_2, \eta_1, \zeta_{11}, \zeta_{21}] \\
            &= \frac{1}{2} \mathbb{E}[\mathbb{E}[Y_{11} Y_{12} + Y_{21} Y_{22} \mid \boldsymbol{\xi}, \boldsymbol{\eta}, Y_{11}, Y_{21}] \mid \xi_1, \xi_2, \eta_1, \zeta_{11}, \zeta_{21}] \\
            &= \frac{1}{2} \mathbb{E}[\lambda f(\xi_1) g(\eta_2) Y_{11} + \lambda^2 f(\xi_2) g(\eta_2) Y_{21} \mid \xi_1, \xi_2, \eta_1, \zeta_{11}, \zeta_{21}] \\
            &= \frac{\lambda}{2} (f(\xi_1) Y_{11} +  f(\xi_2) Y_{21}),
        \end{split}
    \end{equation*}
    and
    \begin{equation*}
        \begin{split}
            \mathbb{E}[h_2(Y_{\{1,2\},\{1,2\}}) \mid \xi_1, \xi_2, \eta_1, \zeta_{11}, \zeta_{21}] &= \frac{1}{2} \mathbb{E}[Y_{11} Y_{22} + Y_{12} Y_{21} \mid \xi_1, \xi_2, \eta_1, \zeta_{11}, \zeta_{21}] \\
            &= \frac{1}{2} \mathbb{E}[\mathbb{E}[Y_{11} Y_{22} + Y_{12} Y_{21} \mid \boldsymbol{\xi}, \boldsymbol{\eta}, Y_{11}, Y_{21}] \mid \xi_1, \xi_2, \eta_1, \zeta_{11}, \zeta_{21}] \\
            &= \frac{1}{2} \mathbb{E}[ \lambda f(\xi_2) g(\eta_2) Y_{11} + \lambda f(\xi_1) g(\eta_2) Y_{21} \mid \xi_1, \xi_2, \eta_1, \zeta_{11}, \zeta_{21}] \\
            &= \frac{\lambda}{2} (f(\xi_2)  Y_{11} + f(\xi_1) Y_{21}).
        \end{split}
    \end{equation*}
    This proves the result.
\end{proof}

\begin{lemma}
    We have $\mathbb{E}[h_3(Y_{\{1,2\},\{1,2\}}) \mid \xi_1, \eta_1, \eta_2] = \frac{\lambda^2}{2}  (f(\xi_1)^2 - 2 f(\xi_1) + F_2) g(\eta_1) g(\eta_2)$.
\end{lemma}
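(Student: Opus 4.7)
The plan is to mirror the computations of Lemmas~\ref{lem:degenerate:condexp_1}--\ref{lem:degenerate:condexp_3}, adapting them to the conditioning $\sigma$-field $\sigma(\xi_1,\eta_1,\eta_2)$. I will expand $h_3=h_1-h_2$, condition separately on the relevant AHK variables in each of the four monomials $Y_{11}Y_{12}$, $Y_{21}Y_{22}$, $Y_{11}Y_{22}$, $Y_{12}Y_{21}$, and then sum.

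First I would apply the tower property, conditioning first on the full latent collection $(\boldsymbol{\xi},\boldsymbol{\eta})$, under which the $Y_{ij}$ are independent Poissons. For any pair of distinct entries $(i,j)\neq(i',j')$ we obtain $\mathbb{E}[Y_{ij}Y_{i'j'}\mid\boldsymbol{\xi},\boldsymbol{\eta}]=\lambda^2 f(\xi_i)f(\xi_{i'})g(\eta_j)g(\eta_{j'})$. Plugging these in and taking conditional expectation with respect to $\sigma(\xi_1,\eta_1,\eta_2)$, the remaining randomness sits in $f(\xi_2)$, and I use independence together with $\mathbb{E}[f(\xi_2)]=\int f=1$ and $\mathbb{E}[f(\xi_2)^2]=F_2$ to integrate it out.

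For the first kernel, this yields
\[
\mathbb{E}[h_1(Y_{\{1,2\},\{1,2\}})\mid\xi_1,\eta_1,\eta_2]=\tfrac{\lambda^2}{2}\bigl(f(\xi_1)^2+F_2\bigr)g(\eta_1)g(\eta_2),
\]
and for the second kernel, both cross-terms $Y_{11}Y_{22}$ and $Y_{12}Y_{21}$ contribute the same conditional expectation, giving
\[
\mathbb{E}[h_2(Y_{\{1,2\},\{1,2\}})\mid\xi_1,\eta_1,\eta_2]=\lambda^2 f(\xi_1)g(\eta_1)g(\eta_2).
\]
Subtracting the two, the factor $g(\eta_1)g(\eta_2)$ factors out and the bracket collapses to $f(\xi_1)^2-2f(\xi_1)+F_2$, yielding the claimed identity.

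The steps are entirely routine given the model specification and the previous lemmas, so there is no genuine obstacle; the only care required is to track which of the four monomials are ``same-row'' (producing $f(\xi_\cdot)^2$) versus ``cross-row'' (producing $f(\xi_1)f(\xi_2)$), and to use $\int f=1$ at the very last step to eliminate the $\xi_2$-dependence in the $h_2$ term.
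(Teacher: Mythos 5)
Your proposal is correct and follows essentially the same route as the paper's proof: apply the tower property by first conditioning on $(\boldsymbol{\xi},\boldsymbol{\eta})$, use the conditional independence of the entries to factor the same-row products into $\lambda^2 f(\xi_i)^2 g(\eta_1)g(\eta_2)$ and the cross-row products into $\lambda^2 f(\xi_1)f(\xi_2)g(\eta_1)g(\eta_2)$, then integrate out $\xi_2$ via $\int f = 1$ and $\int f^2 = F_2$ before subtracting. The intermediate expressions for $\mathbb{E}[h_1\mid\xi_1,\eta_1,\eta_2]$ and $\mathbb{E}[h_2\mid\xi_1,\eta_1,\eta_2]$ match the paper's exactly.
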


\begin{proof}
    We have
    \begin{equation*}
        \begin{split}
            \mathbb{E}[h_1(Y_{\{1,2\},\{1,2\}}) \mid \xi_1, \eta_1, \eta_2] &= \frac{1}{2} \mathbb{E}[Y_{11} Y_{12} + Y_{21} Y_{22} \mid \xi_1, \eta_1, \eta_2] \\
            &= \frac{1}{2} \mathbb{E}[\mathbb{E}[Y_{11} Y_{12} + Y_{21} Y_{22} \mid \boldsymbol{\xi}, \boldsymbol{\eta}] \mid \xi_1, \eta_1, \eta_2] \\
            &= \frac{1}{2} \mathbb{E}[\lambda^2 f(\xi_1)^2 g(\eta_1) g(\eta_2) + \lambda^2 f(\xi_2)^2 g(\eta_1) g(\eta_2) \mid \xi_1, \eta_1, \eta_2] \\
            &= \frac{\lambda^2}{2} (f(\xi_1)^2 + F_2) g(\eta_1) g(\eta_2),
        \end{split}
    \end{equation*}
    and
    \begin{equation*}
        \begin{split}
            \mathbb{E}[h_2(Y_{\{1,2\},\{1,2\}}) \mid \xi_1, \eta_1, \eta_2] &= \frac{1}{2} \mathbb{E}[Y_{11} Y_{22} + Y_{12} Y_{21} \mid \xi_1, \eta_1, \eta_2] \\
            &= \frac{1}{2} \mathbb{E}[\mathbb{E}[Y_{11} Y_{22} + Y_{12} Y_{21} \mid \boldsymbol{\xi}, \boldsymbol{\eta}] \mid \xi_1, \eta_1, \eta_2] \\
            &= \frac{1}{2} \mathbb{E}[ 2 \lambda^2 f(\xi_1) f(\xi_2) g(\eta_1) g(\eta_2) \mid \xi_1, \eta_1, \eta_2] \\
            &= \lambda^2 f(\xi_1) g(\eta_1) g(\eta_2).
        \end{split}
    \end{equation*}
    This proves the result.
\end{proof}

\begin{lemma}
    We have $\mathbb{E}[h_3(Y_{\{1,2\},\{1,2\}}) \mid \xi_1, \eta_1, \eta_2, \zeta_{11}] = \frac{\lambda}{2} (f(\xi_1) - 1) g(\eta_2) Y_{11} + \frac{\lambda^2}{2} (F_2 - f(\xi_1)) g(\eta_1) g(\eta_2)$.
\end{lemma}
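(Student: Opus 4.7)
The plan is to write $h_3=h_1-h_2$ and compute the two conditional expectations separately, following the same template as Lemmas~\ref{lem:degenerate:condexp_1}--\ref{lem:degenerate:condexp_2}, then take the difference. Under the AHK representation of the Poisson-BEDD, only $Y_{11}=\phi(\xi_1,\eta_1,\zeta_{11})$ is measurable with respect to $\sigma(\xi_1,\eta_1,\eta_2,\zeta_{11})$; the remaining entries $Y_{12},Y_{21},Y_{22}$ must be integrated out using the conditional Poisson mean $\mathbb{E}[Y_{ij}\mid\boldsymbol{\xi},\boldsymbol{\eta}]=\lambda f(\xi_i)g(\eta_j)$, the independence of distinct entries given $(\boldsymbol{\xi},\boldsymbol{\eta})$, and the identities $\int f=\int g=1$ and $\int f^2=F_2$.

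First I would compute $\mathbb{E}[h_1\mid \xi_1,\eta_1,\eta_2,\zeta_{11}]$ by conditioning on the richer sigma-algebra $\sigma(\boldsymbol{\xi},\boldsymbol{\eta},Y_{11})$ and then marginalizing. The term $Y_{11}Y_{12}$ collapses to $\lambda f(\xi_1) g(\eta_2) Y_{11}$ since $Y_{11}$ is observed and $\mathbb{E}[Y_{12}\mid\boldsymbol{\xi},\boldsymbol{\eta}]=\lambda f(\xi_1)g(\eta_2)$; the term $Y_{21}Y_{22}$ gives $\lambda^2 f(\xi_2)^2 g(\eta_1)g(\eta_2)$ in the inner step, and averaging the unseen $\xi_2$ turns $f(\xi_2)^2$ into $F_2$. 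This yields
\begin{equation*}
\mathbb{E}[h_1\mid \xi_1,\eta_1,\eta_2,\zeta_{11}]=\tfrac{\lambda}{2} f(\xi_1) g(\eta_2) Y_{11}+\tfrac{\lambda^2}{2} F_2\, g(\eta_1) g(\eta_2).
\end{equation*}

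Next I would treat $h_2$ analogously. The term $Y_{11}Y_{22}$ yields $\lambda f(\xi_2)g(\eta_2)Y_{11}$ before averaging out $\xi_2$, producing $\lambda g(\eta_2)Y_{11}$; the term $Y_{12}Y_{21}$ factors (conditionally on $\boldsymbol{\xi},\boldsymbol{\eta}$) as $\lambda^2 f(\xi_1) f(\xi_2) g(\eta_1) g(\eta_2)$, and marginalizing $\xi_2$ gives $\lambda^2 f(\xi_1) g(\eta_1) g(\eta_2)$. Thus
\begin{equation*}
\mathbb{E}[h_2\mid \xi_1,\eta_1,\eta_2,\zeta_{11}]=\tfrac{\lambda}{2} g(\eta_2) Y_{11}+\tfrac{\lambda^2}{2} f(\xi_1) g(\eta_1) g(\eta_2).
\end{equation*}
Subtracting the two lines yields the claimed formula $\tfrac{\lambda}{2}(f(\xi_1)-1)g(\eta_2)Y_{11}+\tfrac{\lambda^2}{2}(F_2-f(\xi_1))g(\eta_1)g(\eta_2)$.

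There is no real obstacle here; the calculation is entirely mechanical. The only care point is bookkeeping—at each step, tracking which of $\xi_1,\xi_2,\eta_1,\eta_2$ and which edge-level latents $\zeta_{ij}$ appear in the conditioning sigma-algebra, so that conditional independence can be used to factor products and so that unseen row/column effects are integrated against $\int f=\int g=1$ and unseen squared effects against $\int f^2=F_2$. Since $\zeta_{12},\zeta_{21},\zeta_{22}$ never appear in the conditioning and $\xi_2$ is unseen, each unseen entry contributes through its conditional mean (or the mean of its square) rather than through its realized value, and this is what drives the simple closed form.
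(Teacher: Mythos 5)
Your proposal is correct and follows essentially the same route as the paper's proof: condition on the richer $\sigma$-algebra $\sigma(\boldsymbol{\xi},\boldsymbol{\eta},Y_{11})$ via the tower property, use conditional independence and the Poisson conditional means, then integrate out $\xi_2$ against $\int f = 1$ and $\int f^2 = F_2$, and subtract the $h_1$ and $h_2$ contributions. All intermediate expressions match those in the paper.
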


\begin{proof}
    We have
    \begin{equation*}
        \begin{split}
            \mathbb{E}[h_1(Y_{\{1,2\},\{1,2\}}) \mid \xi_1, \eta_1, \eta_2, \zeta_{11}] &= \frac{1}{2} \mathbb{E}[Y_{11} Y_{12} + Y_{21} Y_{22} \mid \xi_1, \eta_1, \eta_2, \zeta_{11}] \\
            &= \frac{1}{2} \mathbb{E}[\mathbb{E}[Y_{11} Y_{12} + Y_{21} Y_{22} \mid \boldsymbol{\xi}, \boldsymbol{\eta}, Y_{11}] \mid \xi_1, \eta_1, \eta_2, \zeta_{11}] \\
            &= \frac{1}{2} \mathbb{E}[\lambda f(\xi_1) g(\eta_2) Y_{11} + \lambda^2 f(\xi_2)^2 g(\eta_1) g(\eta_2) \mid \xi_1, \eta_1, \eta_2, \zeta_{11}] \\
            &= \frac{\lambda}{2} f(\xi_1) g(\eta_2) Y_{11} + \frac{\lambda^2}{2} F_2 g(\eta_1) g(\eta_2),
        \end{split}
    \end{equation*}
    and
    \begin{equation*}
        \begin{split}
            \mathbb{E}[h_2(Y_{\{1,2\},\{1,2\}}) \mid \xi_1, \eta_1, \eta_2, \zeta_{11}] &= \frac{1}{2} \mathbb{E}[Y_{11} Y_{22} + Y_{12} Y_{21} \mid \xi_1, \eta_1, \eta_2, \zeta_{11}] \\
            &= \frac{1}{2} \mathbb{E}[\mathbb{E}[Y_{11} Y_{22} + Y_{12} Y_{21} \mid \boldsymbol{\xi}, \boldsymbol{\eta}, Y_{11}] \mid \xi_1, \eta_1, \eta_2, \zeta_{11}] \\
            &= \frac{1}{2} \mathbb{E}[ \lambda f(\xi_2) g(\eta_2) Y_{11} + \lambda^2 f(\xi_1) f(\xi_2) g(\eta_1) g(\eta_2) \mid \xi_1, \eta_1, \eta_2, \zeta_{11}] \\
            &= \frac{\lambda}{2} g(\eta_2) Y_{11} + \frac{\lambda^2}{2} f(\xi_1) g(\eta_1) g(\eta_2).
        \end{split}
    \end{equation*}
    This proves the result.
\end{proof}

\begin{lemma}
    We have $\mathbb{E}[h_3(Y_{\{1,2\},\{1,2\}}) \mid \xi_1, \eta_1, \eta_2, \zeta_{11}, \zeta_{12}] = \frac{1}{2} Y_{11} Y_{12} - \frac{\lambda}{2} (g(\eta_2) Y_{11} + g(\eta_1) Y_{12}) + \frac{\lambda^2}{2} F_2 g(\eta_1) g(\eta_2)$.
    \label{lem:degenerate:condexp_4}
\end{lemma}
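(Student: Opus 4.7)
The plan is to mirror the preceding lemmas: write $h_3 = h_1 - h_2$, compute each conditional expectation separately, and subtract. The key observation for this particular conditioning is that under $\sigma(\xi_1,\eta_1,\eta_2,\zeta_{11},\zeta_{12})$, both $Y_{11} = \phi(\xi_1,\eta_1,\zeta_{11})$ and $Y_{12} = \phi(\xi_1,\eta_2,\zeta_{12})$ are measurable, whereas $Y_{21}$ and $Y_{22}$ depend on the independent latents $\xi_2$ (and $\zeta_{21},\zeta_{22}$). This asymmetry between the two rows is what distinguishes the bookkeeping from Lemma~\ref{lem:degenerate:condexp_2}.

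For $h_1 = \tfrac{1}{2}(Y_{11}Y_{12} + Y_{21}Y_{22})$, I would keep the first summand as $Y_{11}Y_{12}$ since it is measurable. For the second, I would apply the tower property by first conditioning on $(\boldsymbol{\xi},\boldsymbol{\eta})$: Poisson conditional independence gives $\mathbb{E}[Y_{21}Y_{22}\mid \boldsymbol{\xi},\boldsymbol{\eta}] = \lambda^2 f(\xi_2)^2 g(\eta_1)g(\eta_2)$, and then marginalization over the independent $\xi_2$ uses $\mathbb{E}[f(\xi_2)^2] = F_2$, yielding $\lambda^2 F_2 g(\eta_1)g(\eta_2)$.

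For $h_2 = \tfrac{1}{2}(Y_{11}Y_{22} + Y_{12}Y_{21})$, I would factor out $Y_{11}$ (resp.\ $Y_{12}$) from each product as measurable, leaving $\mathbb{E}[Y_{22}\mid \cdot]$ and $\mathbb{E}[Y_{21}\mid \cdot]$. The same tower-then-marginalize step together with $\int f = 1$ collapses these to $\lambda g(\eta_2)$ and $\lambda g(\eta_1)$ respectively, producing $\tfrac{\lambda}{2}\bigl(g(\eta_2)Y_{11} + g(\eta_1)Y_{12}\bigr)$.

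Subtracting the two expressions gives the claimed identity. There is no substantive obstacle here: the computation is a direct Poisson conditional-expectation exercise and is essentially parallel to Lemma~\ref{lem:degenerate:condexp_2}. The only point requiring care is correctly tracking which of the four submatrix entries are measurable under the conditioning (here $Y_{11}$ and $Y_{12}$ both are) and therefore which products collapse versus which still require marginalization over the missing row-latent $\xi_2$.
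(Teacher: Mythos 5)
Your proposal is correct and follows essentially the same route as the paper: decompose $h_3=h_1-h_2$, use the tower property to reduce each product to a conditional expectation over the missing latents, and marginalize over $\xi_2$ using $\int f^2=F_2$ and $\int f=1$. The paper phrases the intermediate step as conditioning on $(\boldsymbol{\xi},\boldsymbol{\eta},Y_{11},Y_{12})$ rather than factoring out the measurable entries directly, but this is the same computation.
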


\begin{proof}
    We have
    \begin{equation*}
        \begin{split}
            \mathbb{E}[h_1(Y_{\{1,2\},\{1,2\}}) \mid \xi_1, \eta_1, \eta_2, \zeta_{11}, \zeta_{12}] &= \frac{1}{2} \mathbb{E}[Y_{11} Y_{12} + Y_{21} Y_{22} \mid \xi_1, \eta_1, \eta_2, \zeta_{11}, \zeta_{12}] \\
            &= \frac{1}{2} \mathbb{E}[\mathbb{E}[Y_{11} Y_{12} + Y_{21} Y_{22} \mid \boldsymbol{\xi}, \boldsymbol{\eta}, Y_{11}, Y_{12}] \mid \xi_1, \eta_1, \eta_2, \zeta_{11}, \zeta_{12}] \\
            &= \frac{1}{2} \mathbb{E}[Y_{11} Y_{12} + \lambda^2 f(\xi_2)^2 g(\eta_1) g(\eta_2) \mid \xi_1, \eta_1, \eta_2, \zeta_{11}, \zeta_{12}] \\
            &= \frac{1}{2} Y_{11} Y_{12} + \frac{\lambda^2}{2} F_2 g(\eta_1) g(\eta_2),
        \end{split}
    \end{equation*}
    and
    \begin{equation*}
        \begin{split}
            \mathbb{E}[h_2(Y_{\{1,2\},\{1,2\}}) \mid \xi_1, \eta_1, \eta_2, \zeta_{11}, \zeta_{12}] &= \frac{1}{2} \mathbb{E}[Y_{11} Y_{22} + Y_{12} Y_{21} \mid \xi_1, \eta_1, \eta_2, \zeta_{11}, \zeta_{12}] \\
            &= \frac{1}{2} \mathbb{E}[\mathbb{E}[Y_{11} Y_{22} + Y_{12} Y_{21} \mid \boldsymbol{\xi}, \boldsymbol{\eta}, Y_{11}, Y_{12}] \mid \xi_1, \eta_1, \eta_2, \zeta_{11}, \zeta_{12}] \\
            &= \frac{1}{2} \mathbb{E}[ \lambda f(\xi_2) g(\eta_2) Y_{11} + \lambda f(\xi_2) g(\eta_1) Y_{12} \mid \xi_1, \eta_1, \eta_2, \zeta_{11}, \zeta_{12}] \\
            &= \frac{\lambda}{2} (g(\eta_2) Y_{11} + g(\eta_1) Y_{12}).
        \end{split}
    \end{equation*}
    This proves the result.
\end{proof}

\begin{lemma}
    We have $\mathbb{E}[\mathbb{E}[h_3(Y_{\{1,2\},\{1,2\}}) \mid \xi_1, \eta_1, \eta_2, \zeta_{11}, \zeta_{12}]^2] = \frac{\lambda^2}{4} F_2 + \frac{\lambda^3}{2} (F_3 - 2 F_2 + 1) G_2 + \frac{\lambda^4}{4} (F_4 - 4 F_3 + 3 F_2^2) G_2^2$.
    \label{lem:degenerate:varexp}
\end{lemma}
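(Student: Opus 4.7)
The plan is a direct calculation starting from Lemma~\ref{lem:degenerate:condexp_4}. Let $A := \mathbb{E}[h_3(Y_{\{1,2\},\{1,2\}}) \mid \xi_1, \eta_1, \eta_2, \zeta_{11}, \zeta_{12}]$ and $B := 2A$, so that the lemma gives
\[
B = Y_{11} Y_{12} - \lambda g(\eta_2) Y_{11} - \lambda g(\eta_1) Y_{12} + \lambda^2 F_2\, g(\eta_1) g(\eta_2),
\]
and $\mathbb{E}[A^2] = \mathbb{E}[B^2]/4$. I would expand $B^2$ into the four squares and six cross-products, evaluate $\mathbb{E}[\cdot]$ of each of the ten summands separately, and sum.

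To evaluate each summand, condition first on $(\xi_1, \eta_1, \eta_2)$: under this $\sigma$-algebra $Y_{11}$ and $Y_{12}$ are independent Poissons with means $\lambda f(\xi_1) g(\eta_1)$ and $\lambda f(\xi_1) g(\eta_2)$, so the Poisson moment identity $\mathbb{E}[Y^2 \mid \mu] = \mu + \mu^2$ applies. Then integrate out $\xi_1,\eta_1,\eta_2$ against their uniform laws using $\int f^k = F_k$, $\int g^k = G_k$ and $F_1 = G_1 = 1$. Because every summand carries at most one $Y^2$ factor on each column, no factor $g(\eta_j)$ ever appears to a power larger than $2$, so only $G_1$ and $G_2$ (not $G_3$ or $G_4$) enter the final polynomial. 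The key explicit values the computation produces are
\[
\mathbb{E}[Y_{11}^2 Y_{12}^2] = \lambda^2 F_2 + 2\lambda^3 F_3 G_2 + \lambda^4 F_4 G_2^2,
\]
together with $\mathbb{E}[g(\eta_j) Y_{11}^2 Y_{12}] = \lambda^2 F_2 G_2 + \lambda^3 F_3 G_2^2$, $\mathbb{E}[g(\eta_1) g(\eta_2) Y_{11} Y_{12}] = \lambda^2 F_2 G_2^2$, $\mathbb{E}[g(\eta_j)^2 Y_{1k}^2] = \lambda G_2 + \lambda^2 F_2 G_2^2$ (for $k \neq j$), and $\mathbb{E}[g(\eta_1) g(\eta_2)^2 Y_{11}] = \lambda G_2^2$, plus the symmetric counterparts under the swap $\eta_1 \leftrightarrow \eta_2$.

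The main obstacle is pure bookkeeping in the final collection by powers of $\lambda$. The $\lambda^2$ coefficient in $\mathbb{E}[B^2]$ comes solely from $\mathbb{E}[Y_{11}^2 Y_{12}^2]$ and equals $F_2$. The $\lambda^3$ coefficient collects contributions $+2 F_3 G_2$ from $\mathbb{E}[Y_{11}^2 Y_{12}^2]$, $+2 G_2$ from $\mathbb{E}[b^2] + \mathbb{E}[c^2]$, and $-4 F_2 G_2$ from $2(\mathbb{E}[ab] + \mathbb{E}[ac])$, combining to $2(F_3 - 2 F_2 + 1) G_2$. The $\lambda^4$ coefficient aggregates six $G_2^2$-terms (from $\mathbb{E}[Y_{11}^2 Y_{12}^2]$, $\mathbb{E}[b^2]+\mathbb{E}[c^2]$, $\mathbb{E}[d^2]$, $\mathbb{E}[ab]+\mathbb{E}[ac]$, $\mathbb{E}[ad]+\mathbb{E}[bc]$, $\mathbb{E}[bd]+\mathbb{E}[cd]$) and, after careful sign- and multiplicity-tracking, simplifies to $(F_4 - 4 F_3 + 3 F_2^2) G_2^2$. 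Dividing $\mathbb{E}[B^2]$ by $4$ yields the claimed identity.
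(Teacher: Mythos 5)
Your proposal is correct and follows essentially the same route as the paper's proof: start from the explicit conditional expectation of Lemma~\ref{lem:degenerate:condexp_4}, expand the square, evaluate each term by conditioning on the latent variables and using Poisson moment identities, then collect. The only differences are cosmetic (rescaling by $2$, enumerating all ten cross terms rather than first merging the symmetric ones via exchangeability), and all of your intermediate expectations and the final coefficient bookkeeping match the paper's computation.
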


\begin{proof}
    We have
    \begin{equation*}
        \begin{split}
            \mathbb{E}[h_3(Y_{\{1,2\},\{1,2\}})& \mid \xi_1, \eta_1, \eta_2, \zeta_{11}, \zeta_{12}]^2 \\
            &= \left(\frac{1}{2} Y_{11} Y_{12} - \frac{\lambda}{2} (g(\eta_2) Y_{11} + g(\eta_1) Y_{12}) + \frac{\lambda^2}{2} F_2 g(\eta_1) g(\eta_2)\right)^2 \\
            &= \frac{1}{4} Y_{11}^2 Y_{12}^2 + \frac{\lambda^2}{4} g(\eta_2)^2 Y_{11}^2 + \frac{\lambda^2}{4} g(\eta_1)^2 Y_{12}^2 + \frac{\lambda^2}{2} g(\eta_1) g(\eta_2) Y_{11} Y_{12} \\
            &\quad + \frac{\lambda^4}{4} F_2^2 g(\eta_1)^2 g(\eta_2)^2 - \frac{\lambda}{2} g(\eta_2) Y_{11}^2 Y_{12} - \frac{\lambda}{2} g(\eta_1) Y_{11} Y_{12}^2 \\
            &\quad + \frac{\lambda^2}{2} F_2 g(\eta_1) g(\eta_2) Y_{11} Y_{12} - \frac{\lambda^3}{2} F_2 g(\eta_1) g(\eta_2)^2 Y_{11} - \frac{\lambda^3}{2} F_2 g(\eta_1)^2 g(\eta_2) Y_{12}.
        \end{split}
    \end{equation*}
    Taking the expectation of this random variable and using the row-column exchangeability of $Y$, it becomes
    \begin{equation*}
        \begin{split}
            \mathbb{E}[\mathbb{E}[h_3(Y_{\{1,2\},\{1,2\}})& \mid \xi_1, \eta_1, \eta_2, \zeta_{11}, \zeta_{12}]^2] \\
            &= \frac{1}{4} \mathbb{E}[Y_{11}^2 Y_{12}^2] + \frac{\lambda^2}{2} \mathbb{E}[g(\eta_2)^2 Y_{11}^2] + \frac{\lambda^2}{2} (F_2 + 1) \mathbb{E}[ g(\eta_1) g(\eta_2) Y_{11} Y_{12} ] \\
            &\quad + \frac{\lambda^4}{4} F_2^2 \mathbb{E}[g(\eta_1)^2 g(\eta_2)^2] - \lambda \mathbb{E}[g(\eta_2) Y_{11}^2 Y_{12}] - \lambda^3 F_2 \mathbb{E}[g(\eta_1) g(\eta_2)^2 Y_{11}].
        \end{split}
    \end{equation*}
    We calculate each term of this expression separately, obtaining
    \begin{equation*}
        \begin{split}
            \frac{1}{4} \mathbb{E}[Y_{11}^2 Y_{12}^2] &= \mathbb{E}[\mathbb{E}[Y_{11}^2 Y_{12}^2 \mid \boldsymbol{\xi}, \boldsymbol{\eta}]] \\
            &= \frac{1}{4} \mathbb{E}[(\lambda f(\xi_1) g(\eta_1) + \lambda^2 f(\xi_1)^2 g(\eta_1)^2) \\
            &\quad \times (\lambda f(\xi_1) g(\eta_2) + \lambda^2 f(\xi_1)^2 g(\eta_2)^2) ] \\
            &= \frac{\lambda^2}{4} \mathbb{E}[ f(\xi_1)^2 g(\eta_1) g(\eta_2) ] + \frac{\lambda^3}{2} \mathbb{E}[ f(\xi_1)^3 g(\eta_1)^2 g(\eta_2)] \\
            &\quad + \frac{\lambda^4}{4} \mathbb{E}[f(\xi_1)^4 g(\eta_1)^2 g(\eta_2)^2 ] \\
            &= \frac{\lambda^2}{4} F_2 + \frac{\lambda^3}{2} F_3 G_2 + \frac{\lambda^4}{4} F_4 G_2^2, \\
            \frac{\lambda^2}{2} \mathbb{E}[g(\eta_2)^2 Y_{11}^2] &= \frac{\lambda^2}{2} \mathbb{E}[\mathbb{E}[g(\eta_2)^2 Y_{11}^2 \mid \boldsymbol{\xi}, \boldsymbol{\eta}]] \\
            &= \frac{\lambda^2}{2} \mathbb{E}[g(\eta_2)^2 (\lambda f(\xi_1) g(\eta_1) + \lambda^2 f(\xi_1)^2 g(\eta_1)^2)] \\
            &= \frac{\lambda^3}{2} G_2 + \frac{\lambda^4}{2} F_2 G_2^2, \\
            \frac{\lambda^2}{2} (F_2 + 1) \mathbb{E}[ g(\eta_1) g(\eta_2) Y_{11} Y_{12} ] &= \frac{\lambda^2}{2} (F_2 + 1) \mathbb{E}[\mathbb{E}[ g(\eta_1) g(\eta_2) Y_{11} Y_{12} \mid \boldsymbol{\xi}, \boldsymbol{\eta}]] \\
            &= \frac{\lambda^2}{2} (F_2 + 1) \mathbb{E}[ \lambda^2 f(\xi_1)^2 g(\eta_1)^2 g(\eta_2)^2] \\
            &= \frac{\lambda^4}{2} (F_2 + 1) F_2 G_2^2, \\
            \lambda \mathbb{E}[g(\eta_2) Y_{11}^2 Y_{12}] &= \lambda \mathbb{E}[\mathbb{E}[g(\eta_2) Y_{11}^2 Y_{12}\mid \boldsymbol{\xi}, \boldsymbol{\eta}]] \\
            &= \lambda \mathbb{E}[g(\eta_2) (\lambda f(\xi_1) g(\eta_1) + \lambda^2 f(\xi_1)^2 g(\eta_1)^2) \lambda f(\xi_1) g(\eta_2)] \\
            &= \lambda^3 F_2 G_2 + \lambda^4 F_3 G_2^2, \\
            \lambda^3 F_2 \mathbb{E}[g(\eta_1) g(\eta_2)^2 Y_{11}] &= \lambda^3 F_2 \mathbb{E}[ \mathbb{E}[g(\eta_1) g(\eta_2)^2 Y_{11} \mid \boldsymbol{\xi}, \boldsymbol{\eta}]] \\
            &= \lambda^3 F_2 \mathbb{E}[\lambda f(\xi_1) g(\eta_1)^2 g(\eta_2)^2] \\
            &= \lambda^4 F_2 G_2^2.
        \end{split}
    \end{equation*}
    Therefore, 
    \begin{equation*}
        \begin{split}
            \mathbb{E}[\mathbb{E}[h_3(Y_{\{1,2\},\{1,2\}})& \mid \xi_1, \eta_1, \eta_2, \zeta_{11}, \zeta_{12}]^2] \\
            &= \frac{\lambda^2}{4} F_2 + \frac{\lambda^3}{2} F_3 G_2 + \frac{\lambda^4}{4} F_4 G_2^2 + \frac{\lambda^3}{2} G_2 + \frac{\lambda^4}{2} F_2 G_2^2 \\
            &\quad + \frac{\lambda^4}{2} (F_2 + 1) F_2 G_2^2 + \frac{\lambda^4}{4} F_2^2 G_2^2 - \lambda^3 F_2 G_2 - \lambda^4 F_3 G_2^2 - \lambda^4 F_2 G_2^2 \\
            &= \frac{\lambda^2}{4} F_2 + \frac{\lambda^3}{2} (F_3 - 2 F_2 + 1) G_2 + \frac{\lambda^4}{4} (F_4 - 4 F_3 + 3 F_2^2) G_2^2,
        \end{split}
    \end{equation*}
    which is the expression given by the lemma.
\end{proof}

\section{Derivation of the variances of Running example~\ref{ex:3}}
\label{app:degenerate:proof_application_ex3}

In this section, we calculate the conditional expectations and the variances of Running example~\ref{ex:3}, investigated in Sections~\ref{sec:degenerate:principal_part} and~\ref{sec:degenerate:gaussian_case}. Let the distribution of $Y$ be defined by 
\begin{align*}
        &\xi_i \overset{i.i.d.}{\sim} \mathcal{U}[0,1], &\forall 1 \le i \le m, \\
        &\eta_j \overset{i.i.d.}{\sim} \mathcal{U}[0,1], &\forall 1 \le j \le n, \\
        &W_{ij} \overset{i.i.d.}{\sim} \mathcal{L}(\alpha), &\forall 1 \le i \le m, 1 \le j \le n, \\
        &Y_{ij}~\mid~\xi_i, \eta_j, W_{ij} \sim \mathcal{P}(\lambda f(\xi_i) g(\eta_j) W_{ij}), &\forall 1 \le i \le m, 1 \le j \le n.
\end{align*}
Let $U_N$ be the $U$-statistic with kernel $h_6 = h_4 - h_5$ where
    \begin{equation*}
        h_4(Y_{\{i_1,i_2\}, \{j_1,j_2\}}) = \frac{1}{4}\left( Y_{i_1j_1} (Y_{i_1j_1} - 1) Y_{i_2j_2} + Y_{i_1j_2} (Y_{i_1j_2} - 1) Y_{i_2j_1} + Y_{i_2j_1} (Y_{i_2j_1} - 1) Y_{i_1j_2} + Y_{i_2j_2} (Y_{i_2j_2} - 1) Y_{i_1j_1}\right)
    \end{equation*}
    and
    \begin{equation*}
        h_5(Y_{\{i_1,j_2\}, \{i_1,j_2\}}) = \frac{1}{4}\left( Y_{i_1j_1} Y_{i_1j_2} Y_{i_2j_2} + Y_{i_1j_2} Y_{i_2j_2} Y_{i_2j_1} + Y_{i_2j_2} Y_{i_2j_1} Y_{i_1j_1} + Y_{i_2j_1} Y_{i_1j_1} Y_{i_1j_2} \right).
    \end{equation*}

\begin{lemma}
    We have $\mathbb{E}[h_6(Y_{\{1,2\},\{1,2\}})] = \lambda^3 F_2 G_2 \alpha$.
    \label{lem:degenerate:exp}
\end{lemma}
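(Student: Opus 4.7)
The plan is to reduce the claim to two Poisson moment identities combined with a structural observation about the edge patterns in $h_4$ and $h_5$. I would first condition on the latent triple $(\xi_i,\eta_j,W_{ij})$, under which $Y_{ij}$ is Poisson with rate $\lambda f(\xi_i)g(\eta_j)W_{ij}$, so that
\begin{equation*}
    \mathbb{E}[Y_{ij}\mid\xi_i,\eta_j,W_{ij}]=\lambda f(\xi_i)g(\eta_j)W_{ij}, \qquad \mathbb{E}[Y_{ij}(Y_{ij}-1)\mid\xi_i,\eta_j,W_{ij}]=\lambda^2 f(\xi_i)^2 g(\eta_j)^2 W_{ij}^2.
\end{equation*}
I would then take the outer expectation and invoke mutual independence of the $\xi_i$'s, the $\eta_j$'s, and the $W_{ij}$'s, together with the normalizations $\int f=\int g=1$, $\mathbb{E}[W_{ij}]=1$, and $\mathbb{E}[W_{ij}^2]=1+\alpha$.

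For $h_4$, each of the four summands pairs a factorial second moment at one ``diagonal'' entry of $K_{2,2}$ (e.g.\ at $(1,1)$) with a linear factor at the opposite diagonal entry (e.g.\ at $(2,2)$). The two involved row indices, column indices, and edge labels are disjoint, so the conditional expectation factorizes completely, with the $W$-contribution $\mathbb{E}[W_{11}^2]\mathbb{E}[W_{22}]=1+\alpha$, the $f$-contribution $\mathbb{E}[f(\xi_1)^2]\mathbb{E}[f(\xi_2)]=F_2$, and the $g$-contribution $G_2$ by symmetry. This gives $\lambda^3 F_2 G_2(1+\alpha)$ per summand and, by row-column exchangeability, the same value for the other three, so $\mathbb{E}[h_4]=\lambda^3 F_2 G_2(1+\alpha)$.

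For $h_5$, each of the four summands is a triple product along a path of length three in $K_{2,2}$ (a bipartite ``cherry''): the three edges are distinct, but exactly one row index and one column index are repeated. The $W$-contribution is then $\mathbb{E}[W]^3=1$, while the $f$- and $g$-contributions are again $F_2$ and $G_2$. Hence each summand equals $\lambda^3 F_2 G_2$ and $\mathbb{E}[h_5]=\lambda^3 F_2 G_2$. Subtracting the two gives $\mathbb{E}[h_6]=\lambda^3 F_2 G_2\,\alpha$, as claimed.

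There is no real obstacle beyond bookkeeping, indexing the summands, and verifying the edge pattern of each motif. Conceptually, the overdispersion parameter $\alpha$ surfaces exactly once, in the gap $\mathbb{E}[W^2]-\mathbb{E}[W]^2=\alpha$ between the repeated edge weight $W_{ij}^2$ induced by the factorial-moment kernel $h_4$ and the distinct edge weights of the cherry kernel $h_5$; the $F_2 G_2$ factor is shared because both motifs have exactly one repeated row and one repeated column index.
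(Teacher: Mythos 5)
Your proposal is correct and follows essentially the same route as the paper's proof: condition on $(\boldsymbol{\xi},\boldsymbol{\eta},\boldsymbol{W})$, apply the Poisson mean and factorial second moment, and factorize the outer expectation using independence and the normalizations $\int f=\int g=\mathbb{E}[W]=1$, $\mathbb{E}[W^2]=1+\alpha$, yielding $\mathbb{E}[h_4]=\lambda^3F_2G_2(1+\alpha)$ and $\mathbb{E}[h_5]=\lambda^3F_2G_2$. The only cosmetic difference is that the paper writes out all four summands explicitly while you compute one and invoke exchangeability.
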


\begin{proof}
    We have 
    \begin{equation*}
        \begin{split}
            \mathbb{E}&[h_4(Y_{\{1,2\},\{1,2\}})] \\
            &= \frac{1}{4} \mathbb{E}[Y_{11} (Y_{11} - 1) Y_{22} + Y_{12} (Y_{12} - 1) Y_{21} + Y_{21} (Y_{21} - 1) Y_{12} + Y_{22} (Y_{22} - 1) Y_{11}] \\
            &= \frac{1}{4} \mathbb{E}[\mathbb{E}[Y_{11} (Y_{11} - 1) Y_{22} + Y_{12} (Y_{12} - 1) Y_{21} + Y_{21} (Y_{21} - 1) Y_{12} + Y_{22} (Y_{22} - 1) Y_{11} \mid \boldsymbol{\xi}, \boldsymbol{\eta}, \boldsymbol{W}]] \\
            &= \frac{1}{4} \mathbb{E}[\lambda^3 f(\xi_1)^2 f(\xi_2) g(\eta_1)^2 g(\eta_2) W_{11}^2 W_{22}  + \lambda^3 f(\xi_1)^2 f(\xi_2) g(\eta_2)^2 g(\eta_1) W_{12}^2 W_{21} \\
            &\quad\quad+ \lambda^3 f(\xi_2)^2 f(\xi_1) g(\eta_1)^2 g(\eta_2) W_{21}^2 W_{12} + \lambda^3 f(\xi_2)^2 f(\xi_1) g(\eta_2)^2 g(\eta_1) W_{22}^2 W_{11}] \\
            &= \lambda^3 F_2 G_2 (\alpha + 1),
        \end{split}
    \end{equation*}
    and 
    \begin{equation*}
        \begin{split}
            \mathbb{E}&[h_5(Y_{\{1,2\},\{1,2\}})] \\
            &= \frac{1}{4} \mathbb{E}[Y_{11} Y_{12} Y_{22} + Y_{12} Y_{22} Y_{21} + Y_{22} Y_{21} Y_{11} + Y_{21} Y_{11} Y_{12}] \\
            &= \frac{1}{4} \mathbb{E}[\mathbb{E}[Y_{11} Y_{12} Y_{22} + Y_{12} Y_{22} Y_{21} + Y_{22} Y_{21} Y_{11} + Y_{21} Y_{11} Y_{12} \mid \boldsymbol{\xi}, \boldsymbol{\eta}, \boldsymbol{W}]] \\
            &= \frac{1}{4} \mathbb{E}[\lambda^3 f(\xi_1)^2 f(\xi_2) g(\eta_2)^2 g(\eta_1) W_{11} W_{12} W_{22}  + \lambda^3 f(\xi_2)^2 f(\xi_1) g(\eta_2)^2 g(\eta_1) W_{12} W_{22} W_{21} \\
            &\quad\quad+ \lambda^3 f(\xi_2)^2 f(\xi_1) g(\eta_1)^2 g(\eta_2) W_{22} W_{21} W_{11} + \lambda^3 f(\xi_1)^2 f(\xi_2) g(\eta_1)^2 g(\eta_2) W_{21} W_{11} W_{12}] \\
            &= \lambda^3 F_2 G_2.
        \end{split}
    \end{equation*}
    This proves the result.
\end{proof}

\begin{lemma}
    We have $\mathbb{E}[h_6(Y_{\{1,2\},\{1,2\}}) \mid \xi_1] = \frac{\lambda^3}{2} f(\xi_1) (f(\xi_1) + F_2) G_2 \alpha$.
    \label{lem:degenerate:condexp_5}
\end{lemma}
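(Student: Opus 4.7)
The plan is to unfold $h_6=h_4-h_5$ term by term and evaluate $\mathbb{E}[\cdot\mid\xi_1]$ by iterated conditioning, taking the innermost conditioning on all latents $(\boldsymbol\xi,\boldsymbol\eta,\boldsymbol W)$. On that $\sigma$-algebra the entries $Y_{ij}$ are independent Poisson variables, which gives the two moment identities $\mathbb{E}[Y_{ij}\mid\boldsymbol\xi,\boldsymbol\eta,\boldsymbol W]=\lambda f(\xi_i)g(\eta_j)W_{ij}$ and the factorial-moment identity $\mathbb{E}[Y_{ij}(Y_{ij}-1)\mid\boldsymbol\xi,\boldsymbol\eta,\boldsymbol W]=\lambda^2 f(\xi_i)^2 g(\eta_j)^2 W_{ij}^2$. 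Since distinct entries involve disjoint edge latents $W_{ij}$, products across distinct $(i,j)$ factor as products of conditional expectations.

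Once inside the outer expectation, the remaining variables $\xi_2,\eta_1,\eta_2,W_{11},W_{12},W_{21},W_{22}$ are independent of $\xi_1$, so I integrate them out using $\mathbb{E}[f(\xi_2)]=\mathbb{E}[g(\eta_\cdot)]=1$, $\mathbb{E}[f(\xi_2)^2]=F_2$, $\mathbb{E}[g(\eta_\cdot)^2]=G_2$, $\mathbb{E}[W_{ij}]=1$ and $\mathbb{E}[W_{ij}^2]=1+\alpha$. For $h_4$, each of the four summands contains exactly one squared edge latent; two of them (the ones where $Y_{11}$ or $Y_{12}$ is squared) contribute $\lambda^3 f(\xi_1)^2 G_2(1+\alpha)$ each, while the other two (with $Y_{21}$ or $Y_{22}$ squared) contribute $\lambda^3 f(\xi_1)F_2 G_2(1+\alpha)$ each. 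Dividing by $4$ yields
\[
\mathbb{E}[h_4(Y_{\{1,2\},\{1,2\}})\mid\xi_1]=\tfrac{\lambda^3}{2}\,f(\xi_1)\bigl(f(\xi_1)+F_2\bigr)G_2(1+\alpha).
\]

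For $h_5$, no edge latent is squared, so $W$-factors all contribute $1$; tracking which indices carry $\xi_1$ versus $\xi_2$, two of the four summands give $\lambda^3 f(\xi_1)^2 G_2$ and the other two give $\lambda^3 f(\xi_1)F_2 G_2$. Hence
\[
\mathbb{E}[h_5(Y_{\{1,2\},\{1,2\}})\mid\xi_1]=\tfrac{\lambda^3}{2}\,f(\xi_1)\bigl(f(\xi_1)+F_2\bigr)G_2.
\]
Subtracting yields the claim, with the factor $\alpha$ arising solely from the difference $(1+\alpha)-1$ coming from $\mathbb{E}[W_{ij}^2]-\mathbb{E}[W_{ij}]^2$.

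The only real bookkeeping hurdle is keeping straight, for each of the eight monomials, which $Y_{ij}$ carries $\xi_1$ (contributing $f(\xi_1)$ or $f(\xi_1)^2$) versus $\xi_2$ (integrated to $1$ or $F_2$), and which edges are squared (bringing in $1+\alpha$) versus simple (bringing in $1$). Organizing the computation as a small $4\times2$ table (rows: summands of $h_4$ and of $h_5$; columns: power of $f(\xi_1)$ and power of $W$) makes the cancellation between $h_4$ and $h_5$ transparent and reduces the proof to the two displays above followed by one subtraction. This mirrors the style of Lemma~\ref{lem:degenerate:exp} and the derivations in Appendix~\ref{app:degenerate:proof_application_ex2}.
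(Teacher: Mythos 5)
Your proposal is correct and follows essentially the same route as the paper: condition first on all latents $(\boldsymbol\xi,\boldsymbol\eta,\boldsymbol W)$ to reduce each monomial via Poisson (factorial) moments, then integrate out everything except $\xi_1$ using $\mathbb{E}[f(\xi_2)^k]$, $\mathbb{E}[g(\eta_j)^k]$ and $\mathbb{E}[W_{ij}^2]=1+\alpha$, and subtract the $h_4$ and $h_5$ contributions. The per-summand bookkeeping you describe matches the paper's computation term for term.
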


\begin{proof}
    We have 
    \begin{equation*}
        \begin{split}
            \mathbb{E}&[h_4(Y_{\{1,2\},\{1,2\}}) \mid \xi_1] \\
            &= \frac{1}{4} \mathbb{E}[Y_{11} (Y_{11} - 1) Y_{22} + Y_{12} (Y_{12} - 1) Y_{21} + Y_{21} (Y_{21} - 1) Y_{12} + Y_{22} (Y_{22} - 1) Y_{11} \mid \xi_1] \\
            &= \frac{1}{4} \mathbb{E}[\mathbb{E}[Y_{11} (Y_{11} - 1) Y_{22} + Y_{12} (Y_{12} - 1) Y_{21} + Y_{21} (Y_{21} - 1) Y_{12} + Y_{22} (Y_{22} - 1) Y_{11} \mid \boldsymbol{\xi}, \boldsymbol{\eta}, \boldsymbol{W}] \mid \xi_1] \\
            &= \frac{1}{4} \mathbb{E}[\lambda^3 f(\xi_1)^2 f(\xi_2) g(\eta_1)^2 g(\eta_2) W_{11}^2 W_{22}  + \lambda^3 f(\xi_1)^2 f(\xi_2) g(\eta_2)^2 g(\eta_1) W_{12}^2 W_{21} \\
            &\quad\quad+ \lambda^3 f(\xi_2)^2 f(\xi_1) g(\eta_1)^2 g(\eta_2) W_{21}^2 W_{12} + \lambda^3 f(\xi_2)^2 f(\xi_1) g(\eta_2)^2 g(\eta_1) W_{22}^2 W_{11} \mid \xi_1] \\
            &= \frac{\lambda^3}{2} f(\xi_1) (f(\xi_1) + F_2) G_2 (\alpha + 1),
        \end{split}
    \end{equation*}
    and 
    \begin{equation*}
        \begin{split}
            \mathbb{E}&[h_5(Y_{\{1,2\},\{1,2\}}) \mid \xi_1] \\
            &= \frac{1}{4} \mathbb{E}[Y_{11} Y_{12} Y_{22} + Y_{12} Y_{22} Y_{21} + Y_{22} Y_{21} Y_{11} + Y_{21} Y_{11} Y_{12} \mid \xi_1] \\
            &= \frac{1}{4} \mathbb{E}[\mathbb{E}[Y_{11} Y_{12} Y_{22} + Y_{12} Y_{22} Y_{21} + Y_{22} Y_{21} Y_{11} + Y_{21} Y_{11} Y_{12} \mid \boldsymbol{\xi}, \boldsymbol{\eta}, \boldsymbol{W}] \mid \xi_1] \\
            &= \frac{1}{4} \mathbb{E}[\lambda^3 f(\xi_1)^2 f(\xi_2) g(\eta_2)^2 g(\eta_1) W_{11} W_{12} W_{22}  + \lambda^3 f(\xi_2)^2 f(\xi_1) g(\eta_2)^2 g(\eta_1) W_{12} W_{22} W_{21} \\
            &\quad\quad+ \lambda^3 f(\xi_2)^2 f(\xi_1) g(\eta_1)^2 g(\eta_2) W_{22} W_{21} W_{11} + \lambda^3 f(\xi_1)^2 f(\xi_2) g(\eta_1)^2 g(\eta_2) W_{21} W_{11} W_{12} \mid \xi_1] \\
            &= \frac{\lambda^3}{2} f(\xi_1) (f(\xi_1) + F_2) G_2.
        \end{split}
    \end{equation*}
    This proves the result.
\end{proof}

\begin{lemma}
    We have $\mathbb{E}[h_6(Y_{\{1,2\},\{1,2\}}) \mid \eta_1] = \frac{\lambda^3}{2} F_2 g(\eta_1) (g(\eta_1) + G_2) \alpha$.
    \label{lem:degenerate:condexp_6}
\end{lemma}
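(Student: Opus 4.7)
The plan is to mirror the computation of Lemma~\ref{lem:degenerate:condexp_5}, exchanging the roles of rows and columns, and exploit the specific structure that makes the difference $h_6 = h_4 - h_5$ pick out the dispersion parameter $\alpha$. I would write $\mathbb{E}[h_6 (Y_{\{1,2\},\{1,2\}}) \mid \eta_1] = \mathbb{E}[h_4(Y_{\{1,2\},\{1,2\}}) \mid \eta_1] - \mathbb{E}[h_5 (Y_{\{1,2\},\{1,2\}}) \mid \eta_1]$ and then, in each of the four summands defining $h_4$ and $h_5$, apply the tower property by conditioning first on the full set of latents $(\boldsymbol{\xi},\boldsymbol{\eta},\boldsymbol{W})$.

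Under that full conditioning, the $Y_{ij}$ are independent Poisson variables with means $\lambda f(\xi_i) g(\eta_j) W_{ij}$. I would use the two elementary Poisson identities $\mathbb{E}[Y \mid \text{mean}=\mu] = \mu$ and $\mathbb{E}[Y(Y-1) \mid \text{mean}=\mu] = \mu^2$, which turn each summand of $h_4$ into a monomial containing a factor $W_{ij}^2$ (coming from the $Y_{ij}(Y_{ij}-1)$ factor) and each summand of $h_5$ into a monomial containing only distinct $W_{ij}$'s. Then I would integrate out $\boldsymbol{\xi}$, $\boldsymbol{\eta}\setminus\{\eta_1\}$ and $\boldsymbol{W}$ using independence together with $\int f = \int g = 1$, $\mathbb{E}[f(\xi_i)^2] = F_2$, $\mathbb{E}[g(\eta_j)^2] = G_2$, $\mathbb{E}[W_{ij}] = 1$, $\mathbb{E}[W_{ij}^2] = 1+\alpha$.

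Carrying out the bookkeeping for all four terms of $h_4$, the only difference between the $h_4$ and $h_5$ contributions will be a global factor $\mathbb{E}[W_{ij}^2] = 1+\alpha$ versus $\mathbb{E}[W_{ij}]^2 = 1$; the graph-structural factors involving $f$ and $g$ will coincide. Pairing up the four terms by whether they involve $g(\eta_1)^2$ or $g(\eta_1) G_2$, I expect $\mathbb{E}[h_4 \mid \eta_1] = \tfrac{\lambda^3}{2} F_2\, g(\eta_1)(g(\eta_1) + G_2)(1+\alpha)$ and $\mathbb{E}[h_5 \mid \eta_1] = \tfrac{\lambda^3}{2} F_2\, g(\eta_1)(g(\eta_1) + G_2)$. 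Subtracting yields the stated expression, with the factor $(1+\alpha) - 1 = \alpha$ emerging precisely from the $W$-moment contrast.

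There is no genuine obstacle here; the computation is a direct, if somewhat lengthy, tower-property calculation entirely analogous to Lemma~\ref{lem:degenerate:condexp_5}. The only mild bookkeeping subtlety is to track which $W$-variables appear squared in each of the eight monomials (four from $h_4$, four from $h_5$), and to verify that the pairing between $h_4$-monomials and $h_5$-monomials preserves the $f$-$g$ structure so that the $\xi, \eta$ moment factors cancel in the difference. Once this is done, the asserted identity follows.
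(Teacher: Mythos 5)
Your proposal is correct and follows essentially the same route as the paper: condition on the full latent vector $(\boldsymbol{\xi},\boldsymbol{\eta},\boldsymbol{W})$, apply the Poisson identities $\mathbb{E}[Y]=\mu$ and $\mathbb{E}[Y(Y-1)]=\mu^2$ to each of the eight monomials, integrate out everything except $\eta_1$, and observe that the $h_4$ and $h_5$ contributions share the factor $\tfrac{\lambda^3}{2}F_2\,g(\eta_1)(g(\eta_1)+G_2)$ and differ only through $\mathbb{E}[W^2]=1+\alpha$ versus a product of first moments equal to $1$. The bookkeeping you outline checks out term by term against the paper's computation.
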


\begin{proof}
    We have 
    \begin{equation*}
        \begin{split}
            \mathbb{E}&[h_4(Y_{\{1,2\},\{1,2\}}) \mid \eta_1] \\
            &= \frac{1}{4} \mathbb{E}[Y_{11} (Y_{11} - 1) Y_{22} + Y_{12} (Y_{12} - 1) Y_{21} + Y_{21} (Y_{21} - 1) Y_{12} + Y_{22} (Y_{22} - 1) Y_{11} \mid \eta_1] \\
            &= \frac{1}{4} \mathbb{E}[\mathbb{E}[Y_{11} (Y_{11} - 1) Y_{22} + Y_{12} (Y_{12} - 1) Y_{21} + Y_{21} (Y_{21} - 1) Y_{12} + Y_{22} (Y_{22} - 1) Y_{11} \mid \boldsymbol{\xi}, \boldsymbol{\eta}, \boldsymbol{W}] \mid \eta_1] \\
            &= \frac{1}{4} \mathbb{E}[\lambda^3 f(\xi_1)^2 f(\xi_2) g(\eta_1)^2 g(\eta_2) W_{11}^2 W_{22}  + \lambda^3 f(\xi_1)^2 f(\xi_2) g(\eta_2)^2 g(\eta_1) W_{12}^2 W_{21} \\
            &\quad\quad+ \lambda^3 f(\xi_2)^2 f(\xi_1) g(\eta_1)^2 g(\eta_2) W_{21}^2 W_{12} + \lambda^3 f(\xi_2)^2 f(\xi_1) g(\eta_2)^2 g(\eta_1) W_{22}^2 W_{11} \mid \eta_1] \\
            &= \frac{\lambda^3}{2} F_2 g(\eta_1) (g(\eta_1) + G_2) (\alpha + 1),
        \end{split}
    \end{equation*}
    and 
    \begin{equation*}
        \begin{split}
            \mathbb{E}&[h_5(Y_{\{1,2\},\{1,2\}}) \mid \eta_1] \\
            &= \frac{1}{4} \mathbb{E}[Y_{11} Y_{12} Y_{22} + Y_{12} Y_{22} Y_{21} + Y_{22} Y_{21} Y_{11} + Y_{21} Y_{11} Y_{12} \mid \eta_1] \\
            &= \frac{1}{4} \mathbb{E}[\mathbb{E}[Y_{11} Y_{12} Y_{22} + Y_{12} Y_{22} Y_{21} + Y_{22} Y_{21} Y_{11} + Y_{21} Y_{11} Y_{12} \mid \boldsymbol{\xi}, \boldsymbol{\eta}, \boldsymbol{W}] \mid \eta_1] \\
            &= \frac{1}{4} \mathbb{E}[\lambda^3 f(\xi_1)^2 f(\xi_2) g(\eta_2)^2 g(\eta_1) W_{11} W_{12} W_{22}  + \lambda^3 f(\xi_2)^2 f(\xi_1) g(\eta_2)^2 g(\eta_1) W_{12} W_{22} W_{21} \\
            &\quad\quad+ \lambda^3 f(\xi_2)^2 f(\xi_1) g(\eta_1)^2 g(\eta_2) W_{22} W_{21} W_{11} + \lambda^3 f(\xi_1)^2 f(\xi_2) g(\eta_1)^2 g(\eta_2) W_{21} W_{11} W_{12} \mid \eta_1] \\
            &= \frac{\lambda^3}{2} F_2 g(\eta_1) (g(\eta_1) + G_2).
        \end{split}
    \end{equation*}
    This proves the result.
\end{proof}

\begin{lemma}
    We have $\mathbb{E}[h_6(Y_{\{1,2\},\{1,2\}}) \mid \xi_1, \xi_2] = \frac{\lambda^3}{2} (f(\xi_1)^2 f(\xi_2) + f(\xi_2)^2 f(\xi_1)) G_2 \alpha$.
    \label{lem:degenerate:condexp_7}
\end{lemma}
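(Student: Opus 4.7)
The plan is to compute $\mathbb{E}[h_4(Y_{\{1,2\},\{1,2\}})\mid \xi_1,\xi_2]$ and $\mathbb{E}[h_5(Y_{\{1,2\},\{1,2\}})\mid \xi_1,\xi_2]$ separately by the same tower argument as in Lemmas~\ref{lem:degenerate:exp}--\ref{lem:degenerate:condexp_6}, and then subtract. First I would condition on the full latent vector $(\boldsymbol{\xi},\boldsymbol{\eta},\boldsymbol{W})$, under which each $Y_{ij}$ is Poisson with mean $\lambda f(\xi_i)g(\eta_j)W_{ij}$. The crucial identities are the Poisson first and factorial second moments, $\mathbb{E}[Y_{ij}\mid \boldsymbol{\xi},\boldsymbol{\eta},\boldsymbol{W}]=\lambda f(\xi_i)g(\eta_j)W_{ij}$ and $\mathbb{E}[Y_{ij}(Y_{ij}-1)\mid \boldsymbol{\xi},\boldsymbol{\eta},\boldsymbol{W}]=\lambda^2 f(\xi_i)^2 g(\eta_j)^2 W_{ij}^2$, so that each of the four summands in $h_4$ and $h_5$ becomes a single monomial in the $f(\xi_i)$, $g(\eta_j)$ and $W_{ij}$.

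Next I take the outer expectation given $(\xi_1,\xi_2)$, using that the $\eta_j$ and $W_{ij}$ are mutually independent and independent of $(\xi_1,\xi_2)$, together with $\int g=1$, $\int g^2=G_2$, $\mathbb{E}[W_{ij}]=1$ and $\mathbb{E}[W_{ij}^2]=\alpha+1$. The bookkeeping is routine: by the row-symmetry of the kernels, two of the four terms in $h_4$ (resp.\ $h_5$) carry the $f$-weight $f(\xi_1)^2 f(\xi_2)$ and the other two carry $f(\xi_1) f(\xi_2)^2$; in every term exactly one $g(\eta_j)$ is squared, yielding a factor $G_2$; in each $h_4$-term a single $W_{ij}^2$ appears, contributing $\alpha+1$, whereas every $h_5$-term contains three distinct $W_{ij}$ and contributes $1$.

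Summing the four summands and dividing by $4$ will give
\begin{align*}
\mathbb{E}[h_4(Y_{\{1,2\},\{1,2\}})\mid \xi_1,\xi_2] &= \tfrac{\lambda^3}{2}\bigl(f(\xi_1)^2 f(\xi_2)+f(\xi_1) f(\xi_2)^2\bigr)G_2(\alpha+1),\\
\mathbb{E}[h_5(Y_{\{1,2\},\{1,2\}})\mid \xi_1,\xi_2] &= \tfrac{\lambda^3}{2}\bigl(f(\xi_1)^2 f(\xi_2)+f(\xi_1) f(\xi_2)^2\bigr)G_2,
\end{align*}
and subtracting produces the claimed identity. There is no substantive obstacle beyond carefully tracking which edge carries $W_{ij}^2$ rather than $W_{ij}$; the essential point is that $h_4-h_5$ is engineered so that precisely the $W$-second-moment factor $(\alpha+1)-1=\alpha$ survives, which is why this kernel isolates the overdispersion parameter even after conditioning on the row latents.
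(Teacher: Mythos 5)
Your proposal is correct and follows essentially the same route as the paper: condition on $(\boldsymbol{\xi},\boldsymbol{\eta},\boldsymbol{W})$, reduce each summand of $h_4$ and $h_5$ to a monomial via the Poisson first and factorial second moments, integrate out the $\eta_j$ and $W_{ij}$ using $\int g=1$, $\int g^2=G_2$, $\mathbb{E}[W_{ij}]=1$, $\mathbb{E}[W_{ij}^2]=\alpha+1$, and subtract. The intermediate expressions for $\mathbb{E}[h_4\mid\xi_1,\xi_2]$ and $\mathbb{E}[h_5\mid\xi_1,\xi_2]$ match the paper's exactly.
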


\begin{proof}
    We have 
    \begin{equation*}
        \begin{split}
            \mathbb{E}&[h_4(Y_{\{1,2\},\{1,2\}}) \mid \xi_1, \xi_2] \\
            &= \frac{1}{4} \mathbb{E}[Y_{11} (Y_{11} - 1) Y_{22} + Y_{12} (Y_{12} - 1) Y_{21} + Y_{21} (Y_{21} - 1) Y_{12} + Y_{22} (Y_{22} - 1) Y_{11} \mid \xi_1, \xi_2] \\
            &= \frac{1}{4} \mathbb{E}[\mathbb{E}[Y_{11} (Y_{11} - 1) Y_{22} + Y_{12} (Y_{12} - 1) Y_{21} + Y_{21} (Y_{21} - 1) Y_{12} + Y_{22} (Y_{22} - 1) Y_{11} \mid \boldsymbol{\xi}, \boldsymbol{\eta}, \boldsymbol{W}] \mid \xi_1, \xi_2] \\
            &= \frac{1}{4} \mathbb{E}[\lambda^3 f(\xi_1)^2 f(\xi_2) g(\eta_1)^2 g(\eta_2) W_{11}^2 W_{22}  + \lambda^3 f(\xi_1)^2 f(\xi_2) g(\eta_2)^2 g(\eta_1) W_{12}^2 W_{21} \\
            &\quad\quad+ \lambda^3 f(\xi_2)^2 f(\xi_1) g(\eta_1)^2 g(\eta_2) W_{21}^2 W_{12} + \lambda^3 f(\xi_2)^2 f(\xi_1) g(\eta_2)^2 g(\eta_1) W_{22}^2 W_{11} \mid \xi_1, \xi_2] \\
            &= \frac{\lambda^3}{2} (f(\xi_1)^2 f(\xi_2) + f(\xi_2)^2 f(\xi_1)) G_2 (\alpha + 1),
        \end{split}
    \end{equation*}
    and 
    \begin{equation*}
        \begin{split}
            \mathbb{E}&[h_5(Y_{\{1,2\},\{1,2\}}) \mid \xi_1, \xi_2] \\
            &= \frac{1}{4} \mathbb{E}[Y_{11} Y_{12} Y_{22} + Y_{12} Y_{22} Y_{21} + Y_{22} Y_{21} Y_{11} + Y_{21} Y_{11} Y_{12} \mid \xi_1, \xi_2] \\
            &= \frac{1}{4} \mathbb{E}[\mathbb{E}[Y_{11} Y_{12} Y_{22} + Y_{12} Y_{22} Y_{21} + Y_{22} Y_{21} Y_{11} + Y_{21} Y_{11} Y_{12} \mid \boldsymbol{\xi}, \boldsymbol{\eta}, \boldsymbol{W}] \mid \xi_1, \xi_2] \\
            &= \frac{1}{4} \mathbb{E}[\lambda^3 f(\xi_1)^2 f(\xi_2) g(\eta_2)^2 g(\eta_1) W_{11} W_{12} W_{22}  + \lambda^3 f(\xi_2)^2 f(\xi_1) g(\eta_2)^2 g(\eta_1) W_{12} W_{22} W_{21} \\
            &\quad\quad+ \lambda^3 f(\xi_2)^2 f(\xi_1) g(\eta_1)^2 g(\eta_2) W_{22} W_{21} W_{11} + \lambda^3 f(\xi_1)^2 f(\xi_2) g(\eta_1)^2 g(\eta_2) W_{21} W_{11} W_{12} \mid \xi_1, \xi_2] \\
            &= \frac{\lambda^3}{2} (f(\xi_1)^2 f(\xi_2) + f(\xi_2)^2 f(\xi_1)) G_2.
        \end{split}
    \end{equation*}
    This proves the result.
\end{proof}

\begin{lemma}
    We have $\mathbb{E}[h_6(Y_{\{1,2\},\{1,2\}}) \mid \eta_1, \eta_2] = \frac{\lambda^3}{2} F_2 (g(\eta_1)^2 g(\eta_2) + g(\eta_2)^2 g(\eta_1)) \alpha$.
\end{lemma}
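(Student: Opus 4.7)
The plan is to mirror the computation of Lemma~\ref{lem:degenerate:condexp_7}, exchanging the roles of rows and columns and of the pair $(f,F_2)$ with $(g,G_2)$. First I would compute $\mathbb{E}[h_4(Y_{\{1,2\},\{1,2\}})\mid\eta_1,\eta_2]$ and $\mathbb{E}[h_5(Y_{\{1,2\},\{1,2\}})\mid\eta_1,\eta_2]$ separately by the tower property, conditioning first on the full latent vector $(\boldsymbol{\xi},\boldsymbol{\eta},\boldsymbol{W})$. The two inputs are the Poisson moment identities $\mathbb{E}[Y_{ij}\mid\boldsymbol{\xi},\boldsymbol{\eta},\boldsymbol{W}]=\lambda f(\xi_i)g(\eta_j)W_{ij}$ and $\mathbb{E}[Y_{ij}(Y_{ij}-1)\mid\boldsymbol{\xi},\boldsymbol{\eta},\boldsymbol{W}]=\lambda^2 f(\xi_i)^2 g(\eta_j)^2 W_{ij}^2$. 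Since each monomial in $h_4$ and $h_5$ involves three distinct index pairs $(i,j)$, the $W_{ij}$ factors are independent and multiply cleanly across the conditional expectation.

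Next I would take the outer expectation over $(\xi_1,\xi_2)$ and the $W_{ij}$'s while keeping $\eta_1,\eta_2$ fixed, using the normalizations $\int f=1$, $\int f^2=F_2$, $\mathbb{E}[W_{ij}]=1$ and $\mathbb{E}[W_{ij}^2]=1+\alpha$. In $h_4$, every monomial $Y_{ij}(Y_{ij}-1)Y_{i'j'}$ has one repeated edge index, so it produces one factor $\mathbb{E}[W^2]=1+\alpha$ and one factor $\mathbb{E}[W]=1$, while the $\xi$-integration of $f(\xi_i)^2 f(\xi_{i'})$ yields $F_2$. In $h_5$, every monomial has three distinct edge indices, giving $\mathbb{E}[W]^3=1$, while the $\xi$-integration of $f(\xi_i)^2 f(\xi_{i'})$ again gives $F_2$.

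Grouping the four monomials by which column index is repeated shows that each of $\mathbb{E}[h_4\mid\eta_1,\eta_2]$ and $\mathbb{E}[h_5\mid\eta_1,\eta_2]$ equals the common shape $\tfrac{\lambda^3}{2}F_2\bigl(g(\eta_1)^2 g(\eta_2)+g(\eta_2)^2 g(\eta_1)\bigr)$ multiplied respectively by $(\alpha+1)$ and by $1$. Subtracting gives the claimed factor $\alpha$, which yields the identity of the lemma.

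The only real obstacle is the bookkeeping: matching each of the four monomials in $h_4$ and in $h_5$ to its correct column-index pattern $g(\eta_1)^2 g(\eta_2)$ versus $g(\eta_1)g(\eta_2)^2$, and confirming that the cross term $\mathbb{E}[f(\xi_i)^2 f(\xi_{i'})]$ equals $F_2$ by independence of $\xi_1,\xi_2$ with $\int f=1$. Once this is organized in a small table, the arithmetic is identical in structure to Lemma~\ref{lem:degenerate:condexp_7}; there is no delicate cancellation beyond the one already exploited in the definition $h_6=h_4-h_5$.
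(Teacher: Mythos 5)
Your proposal is correct and follows essentially the same route as the paper: condition on the full latent vector $(\boldsymbol{\xi},\boldsymbol{\eta},\boldsymbol{W})$, apply the Poisson factorial-moment identities, then integrate out the $\xi$'s and $W$'s with $\int f = 1$, $\int f^2 = F_2$, $\mathbb{E}[W]=1$, $\mathbb{E}[W^2]=1+\alpha$, so that $h_4$ and $h_5$ share the common shape $\tfrac{\lambda^3}{2}F_2\bigl(g(\eta_1)^2g(\eta_2)+g(\eta_2)^2g(\eta_1)\bigr)$ with factors $(\alpha+1)$ and $1$ respectively. One small inaccuracy: each monomial of $h_4$ involves only \emph{two} distinct edge indices (one repeated), not three as you first state, but your subsequent treatment ("one repeated edge index, giving one factor $\mathbb{E}[W^2]$") is the correct one and is what the computation actually uses.
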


\begin{proof}
    We have 
    \begin{equation*}
        \begin{split}
            \mathbb{E}&[h_4(Y_{\{1,2\},\{1,2\}}) \mid \eta_1, \eta_2] \\
            &= \frac{1}{4} \mathbb{E}[Y_{11} (Y_{11} - 1) Y_{22} + Y_{12} (Y_{12} - 1) Y_{21} + Y_{21} (Y_{21} - 1) Y_{12} + Y_{22} (Y_{22} - 1) Y_{11} \mid \eta_1, \eta_2] \\
            &= \frac{1}{4} \mathbb{E}[\mathbb{E}[Y_{11} (Y_{11} - 1) Y_{22} + Y_{12} (Y_{12} - 1) Y_{21} + Y_{21} (Y_{21} - 1) Y_{12} + Y_{22} (Y_{22} - 1) Y_{11} \mid \boldsymbol{\xi}, \boldsymbol{\eta}, \boldsymbol{W}] \mid \eta_1, \eta_2] \\
            &= \frac{1}{4} \mathbb{E}[\lambda^3 f(\xi_1)^2 f(\xi_2) g(\eta_1)^2 g(\eta_2) W_{11}^2 W_{22}  + \lambda^3 f(\xi_1)^2 f(\xi_2) g(\eta_2)^2 g(\eta_1) W_{12}^2 W_{21} \\
            &\quad\quad+ \lambda^3 f(\xi_2)^2 f(\xi_1) g(\eta_1)^2 g(\eta_2) W_{21}^2 W_{12} + \lambda^3 f(\xi_2)^2 f(\xi_1) g(\eta_2)^2 g(\eta_1) W_{22}^2 W_{11} \mid \eta_1, \eta_2] \\
            &= \frac{\lambda^3}{2} F_2 (g(\eta_1)^2 g(\eta_2) + g(\eta_2)^2 g(\eta_1)) (\alpha + 1),
        \end{split}
    \end{equation*}
    and 
    \begin{equation*}
        \begin{split}
            \mathbb{E}&[h_5(Y_{\{1,2\},\{1,2\}}) \mid \eta_1, \eta_2] \\
            &= \frac{1}{4} \mathbb{E}[Y_{11} Y_{12} Y_{22} + Y_{12} Y_{22} Y_{21} + Y_{22} Y_{21} Y_{11} + Y_{21} Y_{11} Y_{12} \mid \eta_1, \eta_2] \\
            &= \frac{1}{4} \mathbb{E}[\mathbb{E}[Y_{11} Y_{12} Y_{22} + Y_{12} Y_{22} Y_{21} + Y_{22} Y_{21} Y_{11} + Y_{21} Y_{11} Y_{12} \mid \boldsymbol{\xi}, \boldsymbol{\eta}, \boldsymbol{W}] \mid \eta_1, \eta_2] \\
            &= \frac{1}{4} \mathbb{E}[\lambda^3 f(\xi_1)^2 f(\xi_2) g(\eta_2)^2 g(\eta_1) W_{11} W_{12} W_{22}  + \lambda^3 f(\xi_2)^2 f(\xi_1) g(\eta_2)^2 g(\eta_1) W_{12} W_{22} W_{21} \\
            &\quad\quad+ \lambda^3 f(\xi_2)^2 f(\xi_1) g(\eta_1)^2 g(\eta_2) W_{22} W_{21} W_{11} + \lambda^3 f(\xi_1)^2 f(\xi_2) g(\eta_1)^2 g(\eta_2) W_{21} W_{11} W_{12} \mid \eta_1, \eta_2] \\
            &= \frac{\lambda^3}{2} F_2 (g(\eta_1)^2 g(\eta_2) + g(\eta_2)^2 g(\eta_1)).
        \end{split}
    \end{equation*}
    This proves the result.
\end{proof}

\begin{lemma}
    We have $\mathbb{E}[h_6(Y_{\{1,2\},\{1,2\}}) \mid \xi_1, \eta_1] = \frac{\lambda^3}{4} f(\xi_1) g(\eta_1) (f(\xi_1) g(\eta_1) + f(\xi_1) G_2 + F_2 g(\eta_1) + F_2 G_2) \alpha$.
\end{lemma}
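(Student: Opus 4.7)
The plan is to mirror the proof pattern used for Lemmas~\ref{lem:degenerate:condexp_5} and~\ref{lem:degenerate:condexp_6}, computing $\mathbb{E}[h_4(Y_{\{1,2\},\{1,2\}}) \mid \xi_1, \eta_1]$ and $\mathbb{E}[h_5(Y_{\{1,2\},\{1,2\}}) \mid \xi_1, \eta_1]$ separately and then subtracting. In both cases I would use the tower property by first conditioning on the enlarged $\sigma$-field generated by $(\boldsymbol{\xi}, \boldsymbol{\eta}, \boldsymbol{W})$, which makes the entries $(Y_{ij})$ conditionally independent Poisson variables with mean $\lambda f(\xi_i) g(\eta_j) W_{ij}$.

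First I would handle $h_4$. Each of its four terms has the form $Y_{ij}(Y_{ij}-1) Y_{kl}$ with $(i,j) \ne (k,l)$, so conditional independence and the Poisson factorial-moment identity $\mathbb{E}[Y_{ij}(Y_{ij}-1) \mid \boldsymbol{\xi}, \boldsymbol{\eta}, \boldsymbol{W}] = \lambda^2 f(\xi_i)^2 g(\eta_j)^2 W_{ij}^2$ yield a product $\lambda^3 f(\xi_i)^2 f(\xi_k) g(\eta_j)^2 g(\eta_l) W_{ij}^2 W_{kl}$. Taking expectation over the independent $W$'s contributes a factor $\mathbb{E}[W_{ij}^2]\mathbb{E}[W_{kl}] = (1+\alpha)$, and then conditioning on $(\xi_1, \eta_1)$ while integrating $\xi_2, \eta_2$ out (using $\int f = \int g = 1$, $\int f^2 = F_2$, $\int g^2 = G_2$) produces the four monomials $f(\xi_1)^2 g(\eta_1)^2$, $f(\xi_1)^2 G_2 g(\eta_1)$, $F_2 f(\xi_1) g(\eta_1)^2$ and $F_2 G_2 f(\xi_1) g(\eta_1)$. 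Summing and factoring $f(\xi_1) g(\eta_1)$ gives
\[
\mathbb{E}[h_4(Y_{\{1,2\},\{1,2\}}) \mid \xi_1, \eta_1] = \tfrac{\lambda^3}{4}(1+\alpha)\, f(\xi_1) g(\eta_1)\bigl[f(\xi_1) g(\eta_1) + f(\xi_1) G_2 + F_2 g(\eta_1) + F_2 G_2\bigr].
\]

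Next I would apply the same procedure to $h_5$. Its four terms are triple products of distinct entries $Y_{ij} Y_{kl} Y_{mn}$ with pairwise distinct edge indices, so the $W$'s factor as $\mathbb{E}[W_{ij} W_{kl} W_{mn}] = 1$ (no $\alpha$ contribution). The $(\xi, \eta)$ pattern in the exponents turns out to be identical to the one appearing in $h_4$ once all four terms are summed, so after integrating $\xi_2, \eta_2$ one recovers the same bracket $f(\xi_1) g(\eta_1) [f(\xi_1) g(\eta_1) + f(\xi_1) G_2 + F_2 g(\eta_1) + F_2 G_2]$, but now without the factor $(1+\alpha)$. Subtracting the two expressions yields exactly the stated formula, with the $(1+\alpha) - 1 = \alpha$ accounting for the single factor of $\alpha$.

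The only genuine bookkeeping point is verifying that the four $(\xi_2, \eta_2)$-patterns produced by $h_5$ match those produced by $h_4$, so that the bracket is the same in both conditional expectations and the subtraction cleanly isolates $\alpha$. Concretely, one must check that under summation over the four rotations defining $h_4$ and $h_5$, every combination $\{(i,j),(i,\bar j),(\bar i,j),(\bar i,\bar j)\}$ of exponent pairs $(f^a, g^b)$ with $a,b \in \{1,F_2\}\cdot\{1,g(\eta_1)\}$ appears once; this is straightforward by symmetry of the kernel under the $\mathbb{S}_2 \times \mathbb{S}_2$ action, but is the step most prone to sign or index errors and should be done explicitly.
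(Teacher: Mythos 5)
Your proposal is correct and follows essentially the same route as the paper: condition on $(\boldsymbol{\xi},\boldsymbol{\eta},\boldsymbol{W})$, use the Poisson factorial-moment identity and the independence of the edge noises to get $\mathbb{E}[W_{ij}^2 W_{kl}]=1+\alpha$ for $h_4$ versus $\mathbb{E}[W_{ij}W_{kl}W_{mn}]=1$ for $h_5$, integrate out $\xi_2,\eta_2$, and observe that the two brackets coincide so the difference isolates $\alpha$. The bookkeeping point you flag does check out: both kernels produce exactly the monomials $f(\xi_1)^2g(\eta_1)^2$, $f(\xi_1)^2G_2\,g(\eta_1)$, $F_2f(\xi_1)g(\eta_1)^2$ and $F_2G_2\,f(\xi_1)g(\eta_1)$, matching the paper's computation.
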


\begin{proof}
    We have 
    \begin{equation*}
        \begin{split}
            \mathbb{E}&[h_4(Y_{\{1,2\},\{1,2\}}) \mid \xi_1, \eta_1] \\
            &= \frac{1}{4} \mathbb{E}[Y_{11} (Y_{11} - 1) Y_{22} + Y_{12} (Y_{12} - 1) Y_{21} + Y_{21} (Y_{21} - 1) Y_{12} + Y_{22} (Y_{22} - 1) Y_{11} \mid \xi_1, \eta_1] \\
            &= \frac{1}{4} \mathbb{E}[\mathbb{E}[Y_{11} (Y_{11} - 1) Y_{22} + Y_{12} (Y_{12} - 1) Y_{21} + Y_{21} (Y_{21} - 1) Y_{12} + Y_{22} (Y_{22} - 1) Y_{11} \mid \boldsymbol{\xi}, \boldsymbol{\eta}, \boldsymbol{W}] \mid \xi_1, \eta_1] \\
            &= \frac{1}{4} \mathbb{E}[\lambda^3 f(\xi_1)^2 f(\xi_2) g(\eta_1)^2 g(\eta_2) W_{11}^2 W_{22}  + \lambda^3 f(\xi_1)^2 f(\xi_2) g(\eta_2)^2 g(\eta_1) W_{12}^2 W_{21} \\
            &\quad\quad+ \lambda^3 f(\xi_2)^2 f(\xi_1) g(\eta_1)^2 g(\eta_2) W_{21}^2 W_{12} + \lambda^3 f(\xi_2)^2 f(\xi_1) g(\eta_2)^2 g(\eta_1) W_{22}^2 W_{11} \mid \xi_1, \eta_1] \\
            &= \frac{\lambda^3}{4} (f(\xi_1)^2 g(\eta_1)^2 + f(\xi_1)^2 G_2 g(\eta_1) + F_2 f(\xi_1) g(\eta_1)^2 + F_2 f(\xi_1) G_2 g(\eta_1)) (\alpha + 1),
        \end{split}
    \end{equation*}
    and 
    \begin{equation*}
        \begin{split}
            \mathbb{E}&[h_5(Y_{\{1,2\},\{1,2\}}) \mid \xi_1, \eta_1] \\
            &= \frac{1}{4} \mathbb{E}[Y_{11} Y_{12} Y_{22} + Y_{12} Y_{22} Y_{21} + Y_{22} Y_{21} Y_{11} + Y_{21} Y_{11} Y_{12} \mid \xi_1, \eta_1] \\
            &= \frac{1}{4} \mathbb{E}[\mathbb{E}[Y_{11} Y_{12} Y_{22} + Y_{12} Y_{22} Y_{21} + Y_{22} Y_{21} Y_{11} + Y_{21} Y_{11} Y_{12} \mid \boldsymbol{\xi}, \boldsymbol{\eta}, \boldsymbol{W}] \mid \xi_1, \eta_1] \\
            &= \frac{1}{4} \mathbb{E}[\lambda^3 f(\xi_1)^2 f(\xi_2) g(\eta_2)^2 g(\eta_1) W_{11} W_{12} W_{22}  + \lambda^3 f(\xi_2)^2 f(\xi_1) g(\eta_2)^2 g(\eta_1) W_{12} W_{22} W_{21} \\
            &\quad\quad+ \lambda^3 f(\xi_2)^2 f(\xi_1) g(\eta_1)^2 g(\eta_2) W_{22} W_{21} W_{11} + \lambda^3 f(\xi_1)^2 f(\xi_2) g(\eta_1)^2 g(\eta_2) W_{21} W_{11} W_{12} \mid \xi_1, \eta_1] \\
            &= \frac{\lambda^3}{4} (f(\xi_1)^2 G_2 g(\eta_1) + F_2 f(\xi_1) G_2 g(\eta_1) + F_2 f(\xi_1) g(\eta_1)^2 + f(\xi_1)^2 g(\eta_1)^2).
        \end{split}
    \end{equation*}
    This proves the result.
\end{proof}

\begin{lemma}
    We have 
    \begin{equation*}
    \begin{split}
        \mathbb{E}[h_6(Y_{\{1,2\},\{1,2\}}) \mid \xi_1, \eta_1, \zeta_{11}] &= \frac{\lambda^3}{4} f(\xi_1) g(\eta_1) (f(\xi_1) G_2 (\alpha + 1) + F_2 g(\eta_1) (\alpha + 1) - F_2 G_2) \\
        &\quad + \frac{\lambda^2}{4} Y_{11} (F_2 G_2 (\alpha + 1) - f(\xi_1) G_2 - F_2 g(\eta_1) - f(\xi_1) g(\eta_1)) \\
        &\quad + \frac{\lambda}{4} Y_{11} (Y_{11} - 1).
    \end{split}
    \end{equation*}
    \label{lem:degenerate:condexp_8}
\end{lemma}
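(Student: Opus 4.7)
The plan is to mirror the derivation pattern used in Lemmas~\ref{lem:degenerate:condexp_5}--\ref{lem:degenerate:condexp_7}, while taking care that conditioning on $\zeta_{11}$ (together with $\xi_1, \eta_1$) fixes both $Y_{11}$ and $W_{11}$, so that any occurrence of $Y_{11}$ is carried through as a known quantity instead of being integrated out. I would write $\mathbb{E}[h_6 \mid \xi_1, \eta_1, \zeta_{11}] = \mathbb{E}[h_4 \mid \xi_1, \eta_1, \zeta_{11}] - \mathbb{E}[h_5 \mid \xi_1, \eta_1, \zeta_{11}]$, handle the eight summands individually, and collect.

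The key step for each summand is a two-layer tower: first condition on $(\boldsymbol{\xi}, \boldsymbol{\eta}, \boldsymbol{W}, Y_{11})$ and use the Poisson moment identities $\mathbb{E}[Y_{ij}\mid \boldsymbol{\xi}, \boldsymbol{\eta}, \boldsymbol{W}] = \lambda f(\xi_i) g(\eta_j) W_{ij}$ and $\mathbb{E}[Y_{ij}(Y_{ij}-1)\mid \boldsymbol{\xi}, \boldsymbol{\eta}, \boldsymbol{W}] = \lambda^2 f(\xi_i)^2 g(\eta_j)^2 W_{ij}^2$ for $(i,j) \neq (1,1)$, leaving $Y_{11}$ as a measurable function of $\zeta_{11}$; then take the outer expectation and integrate out the remaining independent latents $\xi_2, \eta_2, W_{12}, W_{21}, W_{22}$ using $\int f = \int g = 1$, $\int f^2 = F_2$, $\int g^2 = G_2$, $\mathbb{E}[W] = 1$, and $\mathbb{E}[W^2] = 1 + \alpha$.

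Concretely, the four $h_4$-terms contribute (after multiplying by $4$): $\lambda Y_{11}(Y_{11}-1)$ from $Y_{11}(Y_{11}-1)Y_{22}$; $\lambda^3 f(\xi_1)^2 g(\eta_1) G_2 (\alpha + 1)$ from $Y_{12}(Y_{12}-1)Y_{21}$; $\lambda^3 f(\xi_1) F_2 g(\eta_1)^2 (\alpha + 1)$ from $Y_{21}(Y_{21}-1)Y_{12}$; and $\lambda^2 F_2 G_2 (\alpha+1) Y_{11}$ from $Y_{22}(Y_{22}-1)Y_{11}$. The four $h_5$-terms contribute (after multiplying by $4$): $\lambda^2 f(\xi_1) G_2 Y_{11}$, $\lambda^3 f(\xi_1) F_2 g(\eta_1) G_2$, $\lambda^2 F_2 g(\eta_1) Y_{11}$, and $\lambda^2 f(\xi_1) g(\eta_1) Y_{11}$. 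Subtracting, grouping by powers of $\lambda$, and dividing by $4$ yields the stated expression.

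The only real bookkeeping hazard is keeping track of where the dispersion factor $(1+\alpha)$ appears: it arises exactly once per second-moment of a $W$, so it shows up in the three $h_4$-terms whose nontrivial Poisson factor is $Y_{ij}(Y_{ij}-1)$ with $(i,j) \neq (1,1)$, and never in the $h_5$-terms (whose $W$'s enter only linearly). The cancellation that eliminates $Y_{11}(Y_{11}-1)$ from the stated formula comes from rewriting $\lambda Y_{11}(Y_{11}-1)$ together with the remaining $Y_{11}$ coefficients; a quick verification at $\alpha = 0$ (reducing to the Poisson-BEDD case) against Lemma~\ref{lem:degenerate:condexp_4} serves as a sanity check that the combined coefficient of $Y_{11}$ and the constant-in-$Y_{11}$ piece are grouped correctly.
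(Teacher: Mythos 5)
Your proposal is correct and follows essentially the same route as the paper's own proof: both condition first on $(\boldsymbol{\xi},\boldsymbol{\eta},\boldsymbol{W},Y_{11})$, apply the Poisson moment identities to every entry other than $Y_{11}$ (which is fixed by $(\xi_1,\eta_1,\zeta_{11})$), and then integrate out $\xi_2,\eta_2$ and the independent $W$'s, with the $(\alpha+1)$ factors arising precisely from the second moments $\mathbb{E}[W_{ij}^2]=1+\alpha$; all eight summand contributions you list agree with the paper's computation and recombine to the stated formula. Two cosmetic slips: there is no cancellation that ``eliminates'' $Y_{11}(Y_{11}-1)$ --- the term $\tfrac{\lambda}{4}Y_{11}(Y_{11}-1)$ survives verbatim in the statement --- and the suggested sanity check against Lemma~\ref{lem:degenerate:condexp_4} is not directly applicable, since that lemma concerns a different kernel ($h_3$) and a different conditioning set.
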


\begin{proof}
    We have 
    \begin{equation*}
        \begin{split}
            \mathbb{E}&[h_4(Y_{\{1,2\},\{1,2\}}) \mid \xi_1, \eta_1, \zeta_{11}] \\
            &= \frac{1}{4} \mathbb{E}[Y_{11} (Y_{11} - 1) Y_{22} + Y_{12} (Y_{12} - 1) Y_{21} + Y_{21} (Y_{21} - 1) Y_{12} + Y_{22} (Y_{22} - 1) Y_{11} \mid \xi_1, \eta_1, \zeta_{11}] \\
            &= \frac{1}{4} \mathbb{E}[\mathbb{E}[Y_{11} (Y_{11} - 1) Y_{22} + Y_{12} (Y_{12} - 1) Y_{21} + Y_{21} (Y_{21} - 1) Y_{12} + Y_{22} (Y_{22} - 1) Y_{11} \mid \boldsymbol{\xi}, \boldsymbol{\eta}, \boldsymbol{W}, Y_{11}] \mid \xi_1, \eta_1, \zeta_{11}] \\
            &= \frac{1}{4} \mathbb{E}[\lambda Y_{11} (Y_{11} - 1) f(\xi_2) g(\eta_2) W_{22}  + \lambda^3 f(\xi_1)^2 f(\xi_2) g(\eta_2)^2 g(\eta_1) W_{12}^2 W_{21} \\
            &\quad\quad+ \lambda^3 f(\xi_2)^2 f(\xi_1) g(\eta_1)^2 g(\eta_2) W_{21}^2 W_{12} + \lambda^2 Y_{11} f(\xi_2)^2 g(\eta_2)^2 W_{22}^2 \mid \xi_1, \eta_1, \zeta_{11}] \\
            &= \frac{\lambda^3}{4} (f(\xi_1)^2 G_2 g(\eta_1) + F_2 f(\xi_1) g(\eta_1)^2 ) (\alpha + 1) + \frac{\lambda^2}{4} Y_{11} F_2 G_2 (\alpha + 1) + \frac{\lambda}{4} Y_{11} (Y_{11} - 1),
        \end{split}
    \end{equation*}
    and 
    \begin{equation*}
        \begin{split}
            \mathbb{E}&[h_5(Y_{\{1,2\},\{1,2\}}) \mid \xi_1, \eta_1, \zeta_{11}] \\
            &= \frac{1}{4} \mathbb{E}[Y_{11} Y_{12} Y_{22} + Y_{12} Y_{22} Y_{21} + Y_{22} Y_{21} Y_{11} + Y_{21} Y_{11} Y_{12} \mid \xi_1, \eta_1, \zeta_{11}] \\
            &= \frac{1}{4} \mathbb{E}[\mathbb{E}[Y_{11} Y_{12} Y_{22} + Y_{12} Y_{22} Y_{21} + Y_{22} Y_{21} Y_{11} + Y_{21} Y_{11} Y_{12} \mid \boldsymbol{\xi}, \boldsymbol{\eta}, \boldsymbol{W}, Y_{11}] \mid \xi_1, \eta_1, \zeta_{11}] \\
            &= \frac{1}{4} \mathbb{E}[\lambda^2 Y_{11} f(\xi_1) f(\xi_2) g(\eta_2)^2 W_{12} W_{22}  + \lambda^3 f(\xi_2)^2 f(\xi_1) g(\eta_2)^2 g(\eta_1) W_{12} W_{22} W_{21} \\
            &\quad\quad+ \lambda^2 Y_{11} f(\xi_2)^2 g(\eta_1) g(\eta_2) W_{22} W_{21} + \lambda^2 Y_{11} f(\xi_1) f(\xi_2) g(\eta_1) g(\eta_2) W_{21} W_{12} \mid \xi_1, \eta_1, \zeta_{11}] \\
            &= \frac{\lambda^3}{4} f(\xi_1) F_2 g(\eta_1) G_2 + \frac{\lambda^2}{4} Y_{11} (f(\xi_1) G_2 + F_2 g(\eta_1) + f(\xi_1) g(\eta_1)).
        \end{split}
    \end{equation*}
    This proves the result.
\end{proof}

\begin{lemma}
    In Running example~\ref{ex:3}, the asymptotic variance of $U^{h_6}_N$ under $\mathcal{H}_0 : \alpha = 0$ is given by 
    \begin{equation*}
        \sigma_6^2 = \frac{\lambda^4}{\rho (1-\rho)} \left[ \lambda (F_3 - F_2^2)(G_3 - G_2^2) + 2 F_2 G_2 \right].
    \end{equation*}
    \label{lem:degenerate:var_ex3}
\end{lemma}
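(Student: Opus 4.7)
The plan is to apply Theorem~\ref{th:degenerate:gaussian_theorem} with the principal-support-graph analysis of Running example~\ref{ex:3}, and reduce the computation of $\sigma_6^2$ to that of a single projection variance. As established in Section~\ref{sec:degenerate:principal_part} under $\mathcal H_0:\alpha=0$, the principal degree of $U^{h_6}_N$ is $d=2$, the only principal degrees-pair is $(r,c)=(1,1)$, and the unique isomorphism class of principal support graphs is $K_{1,1}$. Since $V^{(2,0)}=V^{(0,2)}=0$ under $\mathcal H_0$ (no non-zero projection exists for those shapes), Theorem~\ref{th:degenerate:gaussian_theorem} gives
\begin{equation*}
\sigma_6^2 \;=\; \rho^{-1}(1-\rho)^{-1} V^{(1,1)} \;=\; \frac{16}{\rho(1-\rho)}\,|\Aut(K_{1,1})|^{-1}\,\mathbb E\!\left[(p^{K_{1,1}})^2\right],
\end{equation*}
using $p=q=2$ in the formula of Proposition~\ref{prop:degenerate:variance_ustat_ep}. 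With $|\Aut(K_{1,1})|=1$, the target identity therefore reduces to
\begin{equation*}
\mathbb E\!\left[(p^{K_{1,1}})^2\right] \;=\; \frac{\lambda^4}{16}\Bigl[\lambda(F_3-F_2^2)(G_3-G_2^2) + 2F_2 G_2\Bigr].
\end{equation*}

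Next I would identify the projection explicitly. By Example~\ref{ex:proj-K11} (restricted to the subgraph lattice below $K_{1,1}$), the only surviving term at shape $K_{1,1}$ is
\begin{equation*}
p^{K_{1,1}} \;=\; \mathbb E[h_6(Y_{\{1,2\},\{1,2\}})\mid \xi_1,\eta_1,\zeta_{11}] \;-\; \mathbb E[h_6(Y_{\{1,2\},\{1,2\}})\mid \xi_1,\eta_1],
\end{equation*}
and under $\mathcal H_0$ the second conditional expectation vanishes by the lemma preceding Lemma~\ref{lem:degenerate:condexp_8}. Thus $p^{K_{1,1}}$ equals the explicit expression given by Lemma~\ref{lem:degenerate:condexp_8} at $\alpha=0$, which is an affine combination of the three ``building blocks'' $Y_{11}(Y_{11}-1)$, $Y_{11}$, and a deterministic function of $(\xi_1,\eta_1)$.

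The final step is to square this expression and take expectations, using the tower property to condition on $(\xi_1,\eta_1)$ first. Because $Y_{11}\mid \xi_1,\eta_1\sim \mathcal P(\lambda f(\xi_1)g(\eta_1))$, the factorial moments satisfy $\mathbb E[Y_{11}(Y_{11}-1)\cdots(Y_{11}-k+1)\mid \xi_1,\eta_1]=(\lambda f(\xi_1)g(\eta_1))^k$; rewriting $Y_{11}^2 = Y_{11}(Y_{11}-1)+Y_{11}$ and $Y_{11}^2(Y_{11}-1)^2$ in factorial-moment form keeps the bookkeeping linear. After integrating out the Poisson variable, the remaining expectation over $(\xi_1,\eta_1)$ collapses monomials of $f(\xi_1)$ and $g(\eta_1)$ into the moments $F_k=\int f^k$ and $G_k=\int g^k$ (with $F_1=G_1=1$). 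The main obstacle is purely algebraic: tracking the many cross terms and verifying that the coefficients of $F_k G_\ell$ with $k+\ell\le 4$ cancel except for those producing $(F_3-F_2^2)(G_3-G_2^2)$ at order $\lambda^5$ and $F_2G_2$ at order $\lambda^4$. I expect the cleanest presentation to split the square as $A^2 = B^2 + 2BC + C^2$, where $B$ collects the $\lambda^3$-deterministic piece and $C$ the $Y_{11}$-dependent piece, and to use $F_k-F_2^{\,k-1}$-style centerings early to foresee the eventual cancellations.
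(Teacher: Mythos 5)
Your reduction is correct and follows the paper's route exactly: under $\mathcal H_0$ the only surviving principal contribution is $(r,c)=(1,1)$ with the single class $K_{1,1}$, so $\sigma_6^2=\rho^{-1}(1-\rho)^{-1}V^{(1,1)}=\tfrac{16}{\rho(1-\rho)}\,\mathbb E[(p^{K_{1,1}})^2]$, and since all strictly smaller conditional expectations vanish under $\mathcal H_0$, $p^{K_{1,1}}$ collapses to $\mathbb E[h_6\mid \xi_1,\eta_1,\zeta_{11}]$ as given by Lemma~\ref{lem:degenerate:condexp_8} at $\alpha=0$. The one shortfall is that the lemma's actual content is the closed-form value of $\mathbb E[(p^{K_{1,1}})^2]$, and you stop at a plan for the algebra rather than carrying it out; as written, the cancellations you ``expect'' are asserted, not verified. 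The paper closes this step with a substitution that makes the bookkeeping short: writing $S=f(\xi_1)g(\eta_1)$ and $T=f(\xi_1)G_2+g(\eta_1)F_2-F_2G_2$, the conditional expectation becomes $\tfrac{\lambda}{4}\bigl(\lambda^2 ST-\lambda(S+T)Y_{11}+Y_{11}(Y_{11}-1)\bigr)$, and the Poisson factorial moments give $\mathbb E[H^2\mid\xi_1,\eta_1]=\lambda^2[\lambda S(S-T)^2+2S^2]$ with $S-T=(f(\xi_1)-F_2)(g(\eta_1)-G_2)$, from which the factors $(F_3-F_2^2)(G_3-G_2^2)$ and $2F_2G_2$ drop out immediately upon integrating over $(\xi_1,\eta_1)$. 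Your proposed ``centering early'' heuristic is pointing at exactly this identity; I recommend you adopt it explicitly so the cross-term cancellation is a one-line consequence rather than a claim.
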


\begin{proof}
    In Running example~\ref{ex:3}, the asymptotic variance of $U^{h_6}_N$ under $\mathcal{H}_0 : \alpha = 0$ is given by $(r,c) = (1,1)$ projection
    \begin{equation*}
        \sigma_6^2 = \frac{16}{\rho (1-\rho)} \lvert \Aut(K_{1,1}) \rvert^{-1} \mathbb{E}[(p^{K_{1,1}})^2] = \frac{16}{\rho (1-\rho)} \mathbb{E}[\mathbb{E}[h_6(Y_{\{1,2\}, \{1,2\}}) \mid \xi_1, \eta_1, \zeta_{11}]^2].
    \end{equation*}
    
    For brevity, write (only in this proof):
    \begin{equation*}
        (h, Y, \xi, \eta, \zeta, f, g) := \left(h_6(Y_{\{1,2\},\{1,2\}}), Y_{11}, \xi_1, \eta_1, \zeta_{11}, f(\xi_1), g(\eta_1)\right).
    \end{equation*}
    Under $\mathcal{H}_0 : \alpha = 0$ (Poisson-BEDD), Lemma~\ref{lem:degenerate:condexp_8} gives (set $\alpha = 0$) 
    \begin{equation*}
    \begin{split}
        H := \frac{4}{\lambda} \mathbb{E}[h \mid \xi, \eta, \zeta] = \lambda^2 f g (f G_2 + g F_2 - F_2 G_2) + \lambda Y (F_2 G_2 - f G_2 - g F_2 - f g) + Y (Y - 1).
    \end{split}
    \end{equation*}
    The desired quantity is therefore $\sigma_6^2 = \frac{\lambda^2}{\rho (1-\rho)} \mathbb{E}[H^2]$.
    
    Set $S := fg$, $T := f G_2 + g F_2 - F_2 G_2$, and $A := \lambda^2 ST$, $B:=- \lambda (S+T)$, so 
    \begin{equation*}
        H= A + B Y + Y (Y - 1).
    \end{equation*}
    Furthermore, we have $Y \mid \xi, \eta \sim \mathcal{P}(\lambda S)$ (Poisson-BEDD). Hence
    \begin{align*}
        &\mathbb{E}[Y \mid \xi, \eta] = \lambda S,  &  &\mathbb{E}[Y^2 \mid \xi, \eta] = \lambda^2 S^2 + \lambda S, \\
        &\mathbb{E}[Y^2 (Y-1) \mid \xi, \eta] = \lambda^3 S^3 + 2 \lambda^2 S^2,  &  &\mathbb{E}[Y^2 (Y-1)^2 \mid \xi, \eta] = \lambda^4 S^4 + 4 \lambda^3 S^3 + 2 \lambda^2 S^2.  
    \end{align*}
    Since $\mathbb{E}[A \mid \xi, \eta] = A$ and $\mathbb{E}[B \mid \xi, \eta] = B$, we develop
    \begin{align*}
        \mathbb{E}[H^2 \mid \xi, \eta] &= A^2 + 2 AB \mathbb{E}[Y  \mid \xi, \eta] + 2 A \mathbb{E}[Y(Y-1) \mid \xi, \eta] + B^2 \mathbb{E}[Y^2 \mid \xi, \eta] \\
        &\quad + 2 B \mathbb{E}[Y^2 (Y-1) \mid \xi, \eta] + \mathbb{E}[Y^2 (Y - 1)^2 \mid \xi, \eta] \\ 
        &= \lambda^4 S^2 T^2 - 2 \lambda^4 S^2 T (S + T) + 2 \lambda^4 S^3T + \lambda^4 S^2 (S + T)^2 + \lambda^3 S (S + T)^2 \\
        &\quad - 2 \lambda^4 S^3 (S + T) - 4 \lambda^3 S^2 (S + T) +  \lambda^4 S^4 + 4 \lambda^3 S^3 + 2 \lambda^2 S^2 \\
        &= \lambda^2 \left[ \lambda S (S - T)^2 + 2 S^2 \right].
    \end{align*}
    Using $S - T = (f-F_2)(g-G_2)$, we obtain
    \begin{align*}
        \mathbb{E}[H^2 \mid \xi, \eta] &= \lambda^2 \left[ \lambda fg( f - F_2)^2(g - G_2)^2 + 2 f^2 g^2 \right].
    \end{align*}
    Now, averaging over $(\xi, \eta)$:
    \begin{align*}
        \mathbb{E}[H^2] &= \mathbb{E}[\mathbb{E}[H^2 \mid \xi, \eta]] \\
        &= \lambda^2 \left( \lambda \mathbb{E}[ f (f - F_2)^2]\mathbb{E}[ g (g - G_2)^2] + 2 \mathbb{E}[ f^2]\mathbb{E}[ g^2] \right) \\
        &= \lambda^2 \left[ \lambda (F_3 - F_2^2)(G_3 - G_2^2) + 2 F_2 G_2 \right],
    \end{align*}
    and thus,
    \begin{align*}
        \sigma_6^2 = \frac{\lambda^4}{\rho (1-\rho)} \left[ \lambda (F_3 - F_2^2)(G_3 - G_2^2) + 2 F_2 G_2 \right].
    \end{align*}
\end{proof}

\section{Simulations}
\label{app:simulations}

In this section, we illustrate the asymptotic behavior of degenerate $U$-statistics through simulation results. Specifically, we consider the $U$-statistics used in Running examples~\ref{ex:2} and~\ref{ex:3}, which are relevant to network analysis by investigating row heterogeneity and Poisson overdispersion, respectively. We examine their empirical distribution as $N$ goes to $+\infty$.  

\paragraph{Running example~\ref{ex:2}: Row heterogeneity.} We consider the Poisson-BEDD network model, with $\lambda=1$, $f(u) = 1$ and $g(v) = (\alpha_g + 1) v^{\alpha_g}$, where $\alpha_g = 1+\sqrt{2}$. Under these settings, we have $F_2 = 1$ and $G_2 = 2$. 

The asymptotically normal $U$-statistic $U^{h_3}_N$ can be used to investigate the row heterogeneity of observed networks. Its asymptotic variance is $\sigma_3^2 = 2 \lambda^2/(\rho(1-\rho)^2)$. The statistic 
\begin{equation*}
    Z^B_N = N^{3/2} \sigma_3^{-1} U^{h_3}_N
\end{equation*}
converges to a standard normal distribution.

$\sigma_3^2$ can also easily be estimated using the $U$-statistic $U^{h_2}_N$, which correspond to the kernel $h_2$ defined in Running example~\ref{ex:1}. Notably, $U^{h_2}_N$ is an unbiased and consistent estimator for $\lambda^2$. According to Slutsky's theorem, the statistic 
\begin{equation*}
    Z^{B'}_N = N^{3/2} (U^{h_2}_{N})^{-1/2}U^{h_3}_N
\end{equation*}
converges to a standard normal distribution.

For $\rho = 1$, $N \in \{N^i : i \in \llbracket 3, 8 \rrbracket \}$, we simulated $K = 500$ adjacency matrices under the Poisson-BEDD model with the specified parameters, and $\rho = 1/2$. In particular, we can modify the value of $F_2$ by setting $\alpha_f$ accordingly. Figure~\ref{fig:degeneracy_f2} displays the Q-Q plots of $Z^B_N$ under $\mathcal{H}_0$, illustrating its quick convergence to a standard normal distribution. Figure~\ref{fig:degeneracy_f2_alt} gives its behavior under the alternative $F_2 = 1.2$. 

\begin{figure}[!tb]
\centering
\includegraphics[width=0.8\linewidth]{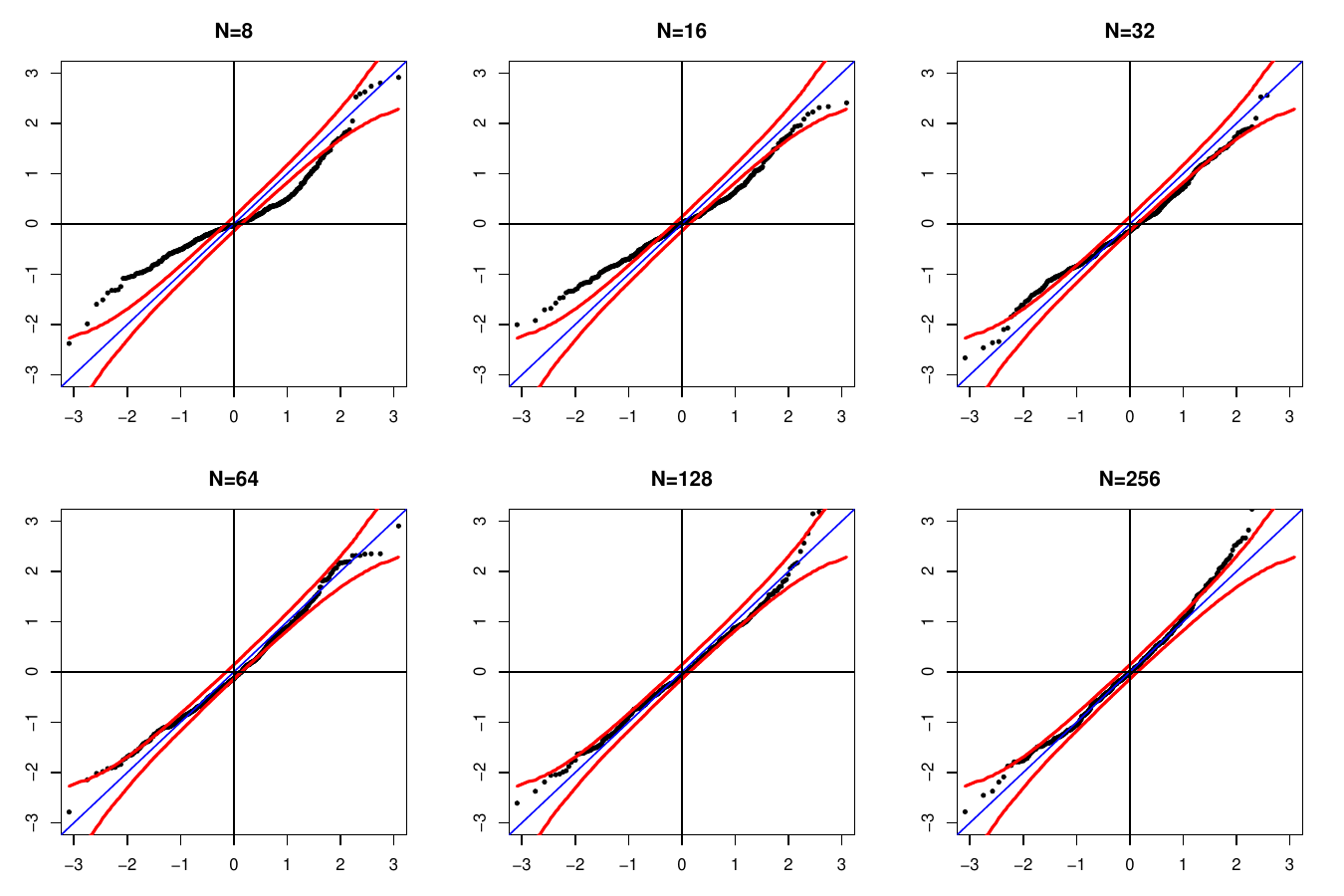}
\caption[Q-Q plots for Running example~\ref{ex:2}]{These Q-Q plots show convergence of $Z^B_N$ to a standard normal distribution under $\mathcal{H}_0$. The red lines represent the $99\%$-confidence envelope. }
\label{fig:degeneracy_f2}
\end{figure}
\begin{figure}[!tb]
\centering
\includegraphics[width=0.8\linewidth]{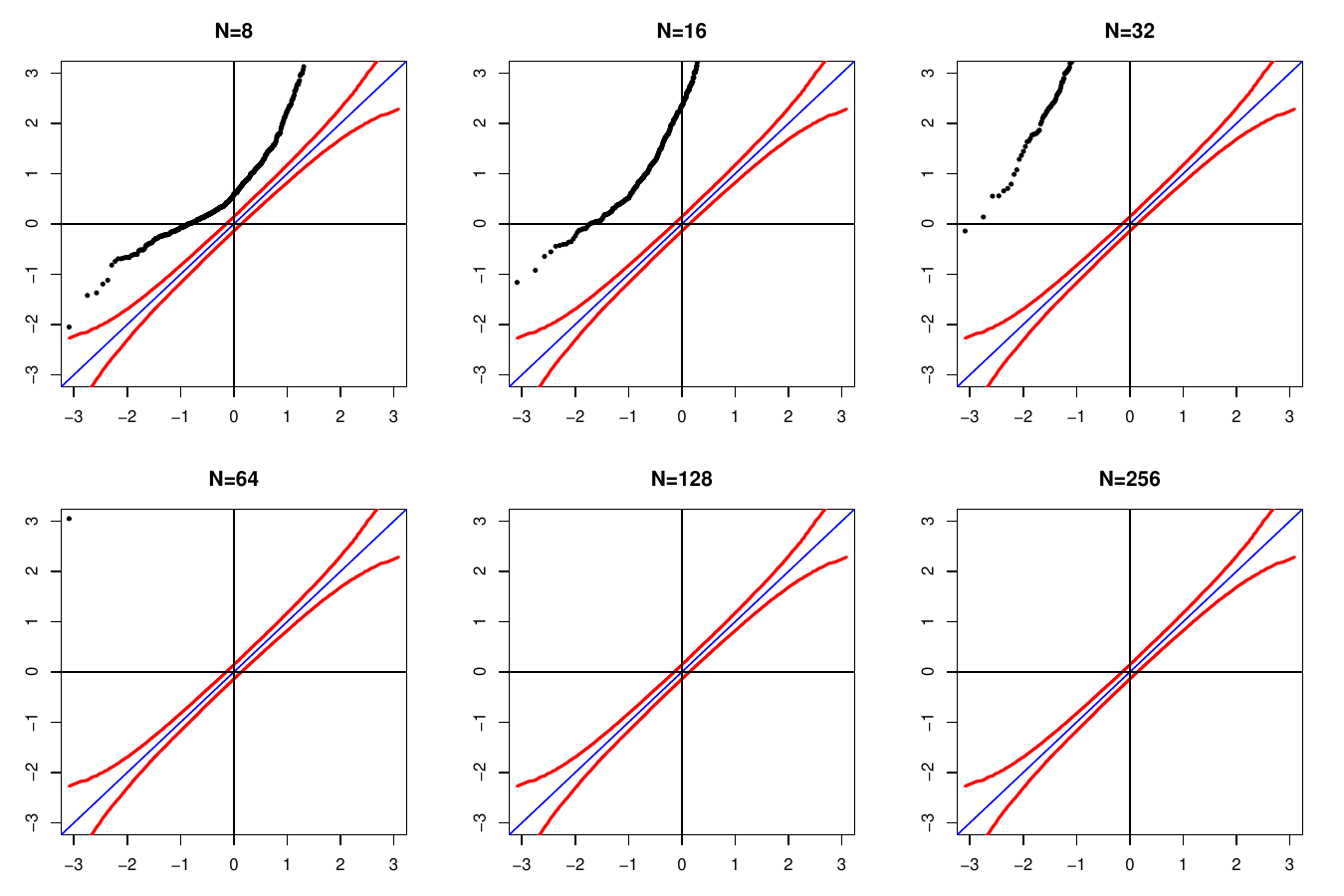}
\caption[Q-Q plots for Running example~\ref{ex:2}]{These Q-Q plots show deviation of $Z^B_N$ from a standard normal distribution under $\mathcal{H}_1$, here $F_2 = 1.2$. The red lines represent the $99\%$-confidence envelope. }
\label{fig:degeneracy_f2_alt}
\end{figure}

\paragraph{Running example~\ref{ex:3}: Overdispersion beyond row-column effects.} We consider the Overdispersed-Poisson-BEDD network model, with $\lambda=1$, $f(u) = 2u$, $g(v) = 2v$ and $\alpha = 0$. In this case, we have $F_2 = G_2 = 4/3$. 

The asymptotically normal $U$-statistic $U^{h_6}_N$ can be used to investigate the overdispersion of the count data represented by the networks. The asymptotic variance $\sigma_6^2$ is given by Lemma~\ref{lem:degenerate:var_ex3}. The statistic 
\begin{equation*}
    Z^C_N = N \sigma_6^{-1} U^{h_6}_N
\end{equation*}
converges to a standard normal distribution.

For $\rho = 1$, $N \in \{N^i : i \in \llbracket 3, 8 \rrbracket \}$, we simulated $K = 500$ adjacency matrices under the Overdispersed-Poisson-BEDD model with the specified parameters, and $\rho = 1/2$. Figure~\ref{fig:degeneracy_over} displays the Q-Q plots of $Z^C_N$, illustrating its quick convergence to a normal distribution with variance 1. 

\begin{figure}[!tb]
\centering
\includegraphics[width=0.8\linewidth]{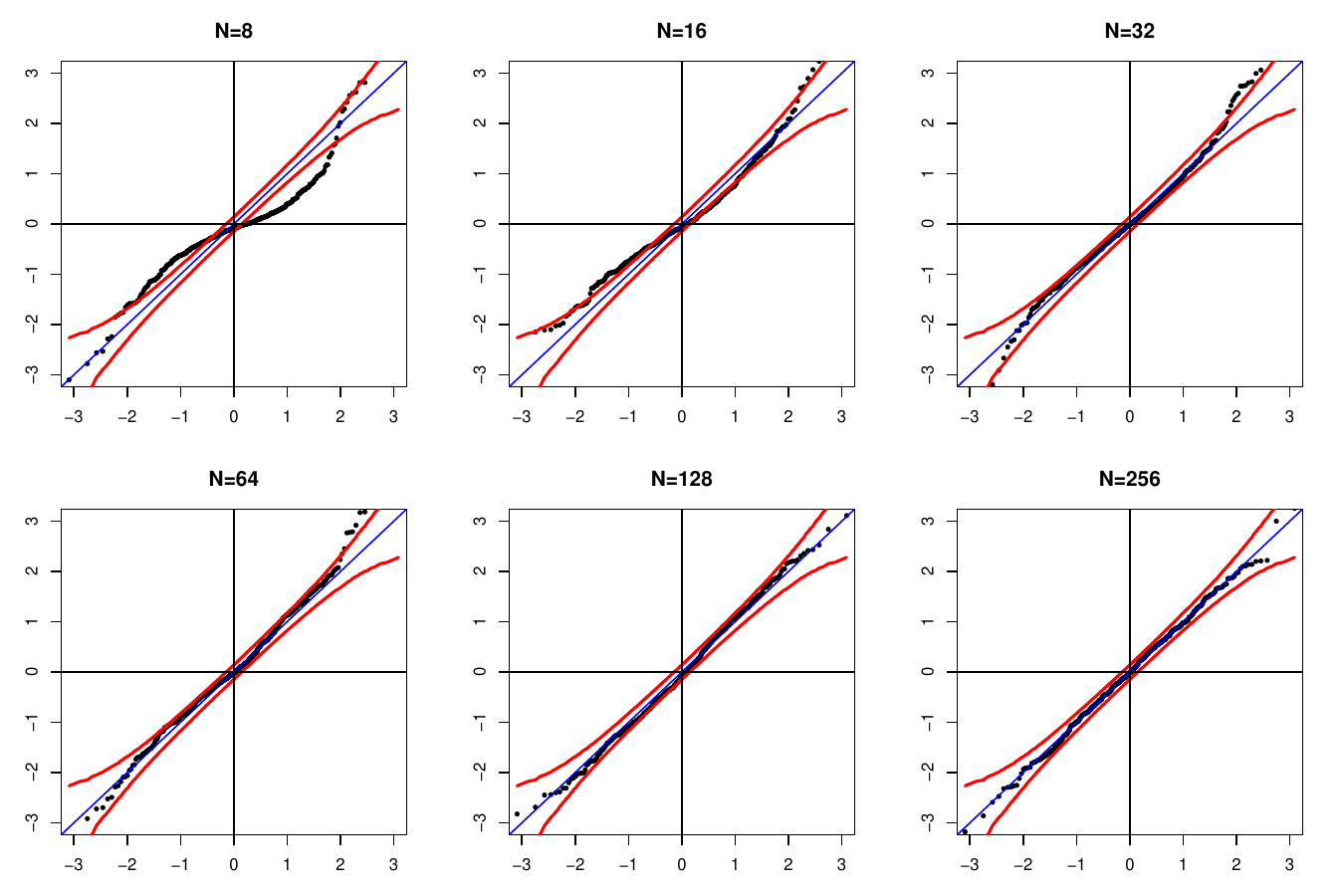}
\caption[Q-Q plots for Running example~\ref{ex:3}]{These Q-Q plots show convergence of $Z^C_N$ to a standard normal distribution under $\mathcal{H}_0$. The red lines represent the $99\%$-confidence envelope.}
\label{fig:degeneracy_over}
\end{figure}

\begin{figure}[!tb]
\centering
\includegraphics[width=0.8\linewidth]{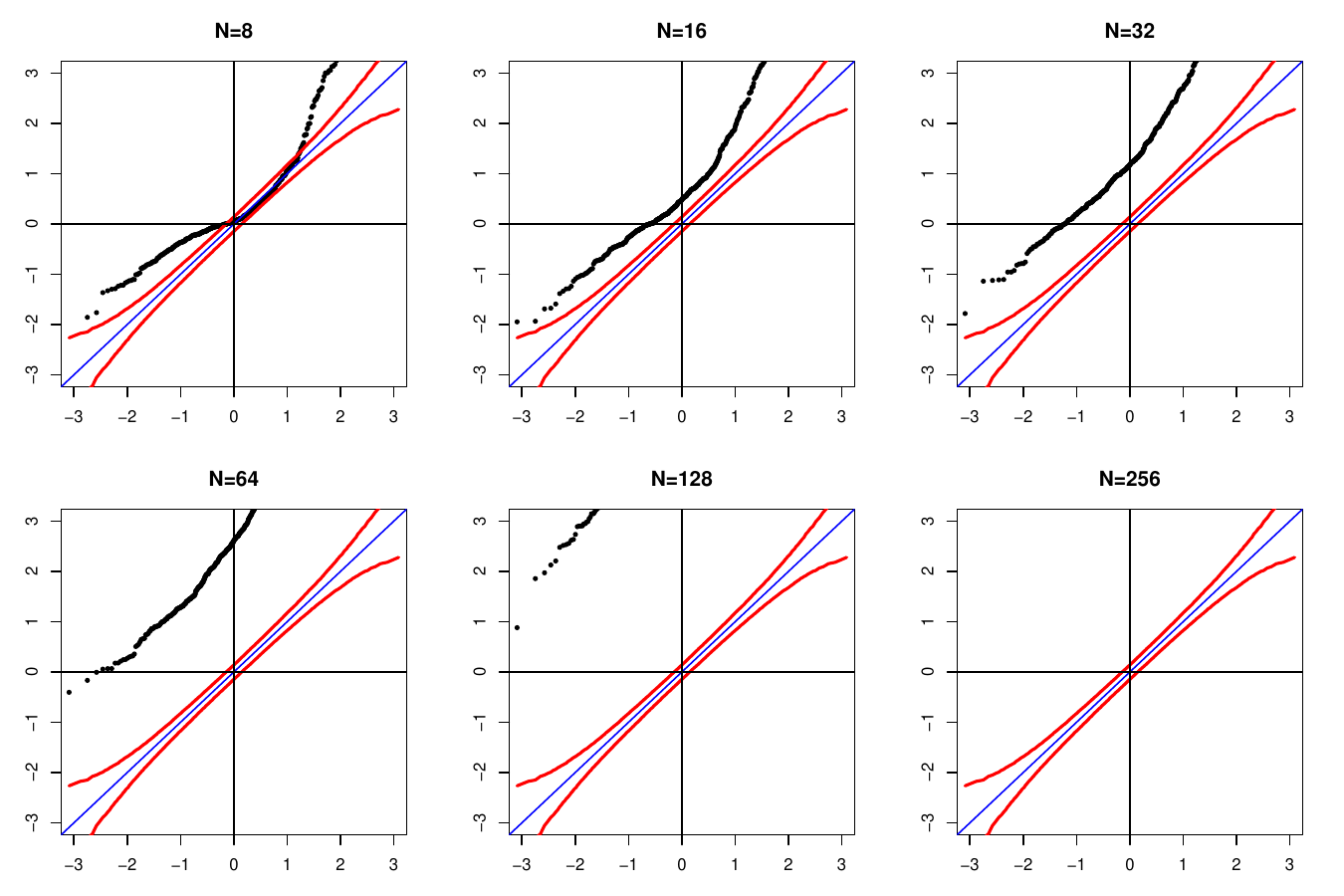}
\caption[Q-Q plots for Running example~\ref{ex:3}]{These Q-Q plots show deviation of $Z^C_N$ from a standard normal distribution under $\mathcal{H}_1$, here $\alpha = 0.1$. The red lines represent the $99\%$-confidence envelope.}
\label{fig:degeneracy_over}
\end{figure}

\end{appendix}

\newpage

\bibliographystyle{apalike}
\bibliography{biblio.bib}

\end{document}